\setlist[enumerate,1]{label={(\alph*)}}
\setlist[enumerate,2]{label={(\roman*)}}
\newtheorem{thm}{Theorem}[section]
\newtheorem{prop}[thm]{Proposition}
\newtheorem{lemma}[thm]{Lemma}
\newtheorem{cor}[thm]{Corollary}
\newtheorem{conj}{Conjecture}
\newcommand{\p}{\partial}
\theoremstyle{definition}
\newtheorem{definition}[thm]{Definition}
\newtheorem{nn}[thm]{Notation}
\theoremstyle{remark}
\newtheorem{rmk}[thm]{Remark}
\newcommand{\R}{\mathbb R}
\newcommand{\Z}{\mathbb Z}
\newcommand{\CM}{{\mathcal{M}}}
\newcommand{\oCM}{{\overline{\mathcal{M}}}}
\newcommand{\CL}{{\mathbb{L}}}
\newcommand{\CS}{{\mathcal{S}}}
\newcommand{\blangle}{{\big\langle}}
\newcommand{\bblangle}{{\big\langle}{\big\langle}}
\newcommand{\brangle}{{\big\rangle}}
\newcommand{\bbrangle}{{\big\rangle}{\big\rangle}}
\newcommand{\tr}{\rho}
\newcommand{\mcH}{\mathcal H}
\newcommand{\mcA}{\mathcal A}
\newcommand{\mbR}{\mathbb R}
\newcommand{\mbZ}{\mathbb Z}
\newcommand{\half}{\frac{1}{2}}
\renewcommand{\>}{\right >}
\newcommand{\mbC}{\mathbb C}
\newcommand{\mcR}{\mathcal R}
\newcommand{\tlambda}{\widetilde\lambda}
\newcommand{\mcL}{\mathcal L}
\newcommand{\mcU}{\mathcal U}
\newcommand{\tF}{\widetilde F}
\newcommand{\tf}{\widetilde f}
\newcommand{\ttau}{\widetilde\tau}
\newcommand{\tc}{\widetilde c}
\newcommand{\diag}{\mathop{\mathrm{diag}}\nolimits}
\renewcommand{\tr}{\mathop{\mathrm{tr}}\nolimits}
\renewcommand{\deg}{\mathop{\mathrm{deg}}\nolimits}
\newcommand{\Faces}{\mathop{\mathrm{Faces}}\nolimits}
\newcommand{\Edges}{\mathop{\mathrm{Edges}}\nolimits}
\newcommand{\Aut}{\mathop{\mathrm{Aut}}\nolimits}
\renewcommand{\Re}{\mathop{\mathrm{Re}}\nolimits}
\renewcommand{\Im}{\mathop{\mathrm{Im}}\nolimits}
\renewcommand{\d}{\partial}
\numberwithin{equation}{section}
\begin{document}

\title[Proof of the open analog of Witten's conjecture]{Matrix models and a proof of the open analog of Witten's conjecture}
\author{Alexandr Buryak}
\address{A.~Buryak:\newline
Department of Mathematics, ETH Zurich,\newline
HG G 27.1, R\"amistrasse 101 8092, Z\"urich, Switzerland}
\email{buryaksh@gmail.com}

\author{Ran J. Tessler}
\address{R.~J.~Tessler:\newline
ETH Inst. f\"ur Theoretische Studien, ETH Zurich,\newline
CLV A4, Clausiusstrasse 47 8092 Z\"urich, Switzerland}
\email{ran.tessler@mail.huji.ac.il}

\subjclass[2010]{14H15, 37K10}

%\date{}

\begin{abstract}
In a recent work, R.~Pandharipande, J.~P.~Solomon and the second author have initiated a study of the intersection theory on the moduli space of Riemann surfaces with boundary. They conjectured that the generating series of the intersection numbers satisfies the open KdV equations. In this paper we prove this conjecture. Our proof goes through a matrix model and is based on a Kontsevich type combinatorial formula for the intersection numbers that was found by the second author.
\end{abstract}

\maketitle

\tableofcontents

\section{Introduction}

The study of the intersection theory on the moduli space of Riemann surfaces with boundary (often viewed, with the boundary removed, as open Riemann surfaces) was recently initiated in~\cite{PST14}. The authors constructed a descendent theory in genus~$0$ and obtained a complete description of it. In all genera, they conjectured that the generating series of the descendent integrals satisfies the open KdV equations. This conjecture can be considered as an open analog of the famous Witten's conjecture~\cite{Wit91}. 
The construction of the higher genus moduli and intersection theory was found by J. Solomon and R.T. in~\cite{JSRTa}. The details of these constructions also appear in~\cite{RT}, Section~2. A combinatorial formula for the open intersection numbers in all genera was found in~\cite{RT}.

In this paper, using the combinatorial formula from~\cite{RT}, we present a matrix integral for the generating series of the open intersection numbers. Then applying some analytical tools to this matrix integral, we prove the main conjecture from~\cite{PST14}.

The introduction is organized as follows. In Section~\ref{wittc} we briefly recall the original conjecture of E.~Witten from \cite{Wit91}. In Section~\ref{konts} we recall Kontsevich's combinatorial formula and Kontsevich's proof (\cite{Kon92}) of Witten's conjecture. Section~\ref{okdv} contains a short account of the main constructions and conjectures in the open intersection theory from \cite{PST14,JSRTa,Bur14a,Bur14b,JSRTb}. Section~\ref{ocomb} describes the combinatorial formula of \cite{RT} for the open intersection numbers of \cite{PST14,JSRTa}.

\subsection{Witten's conjecture}\label{wittc}

\begin{nn}
Throughout this text $[n]$ will denote the set $\{1,2,\ldots,n\}.$
\end{nn}

\subsubsection{Intersection numbers}

A compact Riemann surface is a compact connected smooth complex curve. Given a fixed genus~$g$ and a non-negative integer $l,$ the moduli space of all compact Riemann surfaces of genus
$g$ with $l$ marked points is denoted by $\CM_{g,l}.$ P.~Deligne and D.~Mumford defined a natural compactification of it via stable curves in~\cite{DM69} in 1969.
Given $g,l$ as above, a stable curve is a compact connected complex curve with $l$ marked points and finitely many singularities, all of which are simple nodes. The collection of
marked points and nodes is the set of special points of the curve. We require that all the special points are distinct and that the automorphism group of the curve is finite. The
moduli of stable marked curves of genus $g$ with $l$ marked points is denoted by~$\oCM_{g,l}$ and is a compactification of~$\CM_{g,l}.$ It is known that this space is a non-singular
complex orbifold of complex dimension $3g-3+l.$ For the basic theory the reader is referred to~\cite{DM69,HM98}.

In his seminal paper~\cite{Wit91}, E.~Witten, motivated by theories of $2$-dimensional quantum gravity, initiated new directions in the study of $\oCM_{g,l}$. For each marking index $i$
he considered the tautological line bundles
$$\CL_i \rightarrow \oCM_{g,l}$$
whose fiber over a point
$$[\Sigma,z_1,\ldots,z_l]\in \oCM_{g,l}$$
is the complex cotangent space $T_{z_i}^*\Sigma$ of $\Sigma$ at $z_i$. Let
$$\psi_i\in H^2(\oCM_{g,l};\mathbb{Q})$$
denote the first Chern class of $\CL_i$, and write
\begin{equation}\label{products}
\blangle\tau_{a_1} \tau_{a_2} \cdots \tau_{a_l}\brangle_g^c:=\int
_{\overline {M}_{g,l}} \psi_1^{a_1} \psi_2^{a_2} \cdots \psi_l^{a_l}.
\end{equation}
The integral on the right-hand side of~\eqref{products} is well-defined, when the stability condition
$$2g-2+l >0$$
is satisfied, all the $a_i$'s are non-negative integers, and the dimension constraint
\begin{equation*}
3g-3+l=\sum_i a_i
\end{equation*}
holds. In all other cases $\blangle\prod_{i=1}^{l} \tau_{a_i}\brangle_g^c$ is defined to be zero.
The intersection products \eqref{products} are often called {\em descendent integrals} or {\em intersection numbers}. Note that the genus is uniquely determined by the exponents~$\{a_i\}$.

Let $t_i$ (for $i\geq 0$) and $u$ be formal variables, and put
$$\gamma:=\sum_{i=0}^{\infty} t_i \tau_i.$$
Let
$$F^c_g(t_0, t_1,\ldots):=\sum_{n=0}^{\infty} \frac{\blangle\gamma^n\brangle^c_g}{n!}$$
be the generating function of the genus $g$ descendent integrals \eqref{products}. The bracket $\blangle\gamma^n\brangle^c_g$ is defined by the monomial expansion and the
multilinearity in the variables $t_i$. Concretely,
$$F_g^c(t_0, t_1, ...)= \sum_{\{n_i\}}\blangle\tau_0^{n_0} \tau_1^{n_1} \tau_2^{n_2} \cdots\brangle_g^c\prod_{i=0}^{\infty}\frac{t_i^{n_i}}{n_i!},$$
where the sum is over all sequences of non-negative integers $\{n_i\}$ with finitely many non-zero terms.
The generating series
\begin{equation}\label{v34}
F^c:=\sum_{g=0}^{\infty} u^{2g-2} F_g^c
\end{equation}
is called the {\it (closed) free energy}. The exponent $\tau^c:=\exp(F^c)$ is called the {\it (closed) partition function}.

\subsubsection{KdV equations}

Put $\blangle\blangle\tau_{a_1} \tau_{a_2} \cdots \tau_{a_l}\brangle\brangle^c:=\frac{\p^l F^c}{\p t_{a_1}\p t_{a_2}\cdots\p t_{a_l}}$. Witten's conjecture~(\cite{Wit91}) says that the closed partition function~$\tau^c$ becomes a tau-function of the KdV hierarchy after the change of variables~$t_n=(2n+1)!!T_{2n+1}$. In particular, it implies that the closed free energy~$F^c$ satisfies the following system of partial differential equations:
\begin{gather}\label{eq:kdv equations}
(2n+1)u^{-2} \blangle\blangle\tau_n \tau_0^2 \brangle\brangle^c = \blangle\blangle\tau_{n-1} \tau_0\brangle\brangle^c\blangle\blangle\tau_0^3\brangle\brangle^c +
2\blangle\blangle\tau_{n-1}\tau_0^2\brangle\brangle^c\blangle\blangle\tau_0^2\brangle\brangle^c+
\frac{1}{4}\blangle\blangle\tau_{n-1} \tau_0^4\brangle\brangle^c,\quad n\ge 1.
\end{gather}
These equations are known in mathematical physics as the KdV equations. E.~Witten~(\cite{Wit91}) proved that the intersection numbers~\eqref{products} satisfy the string equation
\begin{gather*}
\left\langle\tau_0 \prod_{i=1}^{l} \tau_{a_i}\right\rangle^c_g =
\sum_{j=1}^{l} \left\langle\tau_{a_j-1} \prod_{i\neq j} \tau_{a_i}\right\rangle^c_g,
\end{gather*}
for $2g-2+l>0$. This equation can be rewritten as the following differential equation:
\begin{gather}\label{eq:string}
\frac{\d F^c}{\d t_0}=\sum_{i\ge 0}t_{i+1}\frac{\d F^c}{\d t_i}+\frac{t_0^2}{2 u^2}.
\end{gather}
E.~Witten also showed that the KdV equations~\eqref{eq:kdv equations} together with the string equation~\eqref{eq:string} actually determine the closed free energy $F^c$ completely.

\subsubsection{Virasoro equations}\label{subsubsection:closed Virasoro}

There was a later reformulation of Witten's conjecture due to R.~Dijkgraaf, E.~Verlinde and H.~Verlinde (\cite{DVV91}) in terms of the Virasoro algebra. Define differential operators
$L_n$, $n\ge -1$, by
\begin{align}
\label{lminus}
L_{-1}:=&-\frac{\p}{\p t_0}+\sum_{i=0}^{\infty} t_{i+1}\frac{\p}{\p t_i}+\frac{u^{-2}}{2} t_0^2,\\
\nonumber
%\label{lzero}
L_0:=&-\frac{3}{2} \frac{\p}{\p t_1}+ \sum_{i=0}^{\infty}
\frac{2i+1}{2} t_i \frac{\p}{\p t_i} + \frac{1}{16},
\end{align}
while for $n\geq 1,$
\begin{align}
%\label{repl}
L_n:=&-\frac{3\cdot 5\cdot 7 \cdots (2n+3)}{2^{n+1}} \frac{\p}{\p t_{n+1}}+\sum_{i=0}^{\infty} \frac{(2i+1)(2i+3) \cdots (2i+2n+1)}{2^{n+1}} t_i \frac{\p}{\p t_{i+n}}\notag\\
& + \frac{u^2}{2} \sum_{i=0}^{n-1} (-1)^{i+1} \frac{(-2i-1)(-2i+1) \cdots
(-2i+2n-1)}{2^{n+1}} \frac{\p^2}{\p t_i \p t_{n-1-i}}.\notag
\end{align}
The Virasoro equations say that the operators $L_n$, $n\geq-1$, annihilate the closed partition function~$\tau^c$:
\begin{equation}
\label{vira}
L_n\tau^c=0,\quad n\ge -1.
\end{equation}
It is easy to see that the Virasoro equations completely determine all intersection numbers. R.~Dijkgraaf, E.~Verlinde and H.~Verlinde (\cite{DVV91}) proved that this description is
equivalent to the one given by the KdV equations and the string equation.

Witten's conjecture was proven by M.~Kontsevich~\cite{Kon92}. See~\cite{KL07,Mir07,OP05} for other proofs.

\subsection{Kontsevich's Proof}\label{konts}

Kontsevich's proof~\cite{Kon92} of Witten's conjecture consisted of two parts.
The first part was to prove a combinatorial formula for the gravitational descendents. Let~$G_{g,n}$ be the set of isomorphism classes of trivalent ribbon graphs of genus~$g$ with~$n$ faces and together with a numbering $\Faces(G)\simeq [n]$. Denote by~$V(G)$ the set of vertices of a graph $G\in G_{g,n}$. Let us
introduce formal variables $\lambda_i$,~$i\in [n]$. For an edge $e\in\Edges(G),$ let $\lambda(e):=\frac{1}{\lambda_i+\lambda_j},$ where~$i$ and~$j$ are the numbers of faces adjacent
to~$e$. Then we have
\begin{gather}\label{eq:Kontsevich's formula}
\sum_{a_1,\ldots,a_n\ge 0}\left<\prod_{i=1}^{n} \tau_{a_i}\right>_g^c\prod_{i=1}^n\frac{(2a_i-1)!!}{\lambda_i^{2a_i+1}} = \sum_{G\in
G_{g,n}}\frac{2^{|\Edges(G)|-|V(G)|}}{|\Aut(G)|}\prod_{e\in\Edges(G)}\lambda(e).
\end{gather}

The second step of Kontsevich's proof was to translate the combinatorial formula into a matrix integral.
Then, by using non-trivial analytical tools and the theory of tau-functions of the KdV hierarchy, he was able to prove that $\tau^c$ is a tau-function of the KdV hierarchy and, hence, the free energy~$F^c$ satisfies the KdV equations~\eqref{eq:kdv equations}.

\subsection{Open intersection numbers and the open KdV equations}\label{okdv}

\subsubsection{Open intersection numbers}

In \cite{PST14} R. Pandharipande, J. Solomon and R.T. constructed an intersection theory on the moduli space of stable marked disks. Let~$\oCM_{0,k,l}$ be the moduli space of stable marked
disks with~$k$ boundary marked points and~$l$ internal marked points. This space carries a natural structure of a compact smooth oriented manifold with corners. One can easily define
the tautological line bundles $\CL_i,$ for $i\in[l]$, as in the closed case.

In order to define gravitational descendents, as in~\eqref{products}, we must specify boundary conditions. Indeed, given a smooth compact connected oriented orbifold with boundary, $(M,\partial M)$ of dimension~$n$, the Poincar\'e-Lefschetz duality shows that
\[
H^n(M,\partial M;\mathbb{Q})\cong H_0(M;\mathbb{Q})\cong\mathbb{Q}.
\]
Thus, given a vector bundle on a manifold with boundary, only \emph{relative Euler
class}, relative to nowhere vanishing boundary conditions, can be integrated to give a number. The main construction in~\cite{PST14} is a construction of boundary conditions for $\CL_i\to\oCM_{0,k,l}.$ In~\cite{PST14}, vector spaces $\CS_i = \CS_{i,0,k,l}$ of
\emph{multisections} of $\CL_i\to\partial\oCM_{0,k,l},$ which satisfy the following requirements, were defined. Suppose $a_1,\ldots,a_l$ are non-negative integers with $2\sum_i
a_i=\dim_\R\oCM_{0,k,l}=k+2l-3,$ then
\begin{enumerate}[label={(\alph*)}]
\item\label{it:canonic1}
For any generic choice of multisections $s_{ij}\in\CS_i,$ for $1\leq j\leq a_i,$ the multisection
$$
s=\bigoplus_{\substack{i\in [l]\\1\le j\le a_i}} s_{ij}
$$
vanishes nowhere on $\partial\oCM_{0,k,l}.$
\item\label{it:canonic2}
For any two such choices $s$ and $s'$ we have
$$\int_{\oCM_{0,k,l}}e(E,s) = \int_{\oCM_{0,k,l}}e(E,s'),$$
where $E:=\bigoplus_i\CL_i^{a_i},$ and $e(E,s)$ is the relative Euler class.
\end{enumerate}
The multisections $s_{ij}$, as above, are called \emph{canonical}. With this construction the open gravitational descendents in genus~$0$ are defined by
\begin{equation}
\label{openproducts}
\blangle\tau_{a_1} \tau_{a_2} \cdots \tau_{a_l}\sigma^k\brangle_0^o:=2^{-\frac{k-1}{2}}\int_{\oCM_{0,k,l}}e(E,s),
\end{equation}
where $E$ is as above and $s$ is canonical.

In a forthcoming paper \cite{JSRTa}, J.~Solomon and R.T. define a generalization for all genera. In \cite{JSRTa} a moduli space $\oCM_{g,k,l}$ which classifies stable genus $g$ Riemann surfaces with boundary, together with some additional structure, is constructed. By the genus of a surface with boundary we mean the genus of the doubled surface.
The moduli space $\oCM_{g,k,l}$ is a smooth oriented compact orbifold with corners, of real dimension
\begin{gather}\label{open dimension}
3g-3+k+2l.
\end{gather}
The stability condition is $2g-2+k+2l>0.$
Note that naively, without adding an extra structure, the moduli of stable surfaces with boundary is non-orientable for $g>0.$
The construction of the moduli space, the definition of open descendents and an alternative proof of orientability also appear in~\cite{RT}, Section~2.

On $\oCM_{g,k,l}$ one defines vector spaces $\CS_i=\CS_{i,g,k,l}$, for
$i\in[l],$ for which the genus $g$ analogs of requirements \ref{it:canonic1},\ref{it:canonic2} from above hold.
Write
\begin{equation}
\label{openproducts}
\blangle\tau_{a_1} \tau_{a_2} \cdots \tau_{a_l}\sigma^k\brangle_g^o:=2^{-\frac{g+k-1}{2}}\int_{\oCM_{g,k,l}}e(E,s),
\end{equation}
for the corresponding higher genus descendents. Introduce one more formal variable $s$. The \emph{open free energy} is the generating function
\begin{equation}\label{eq:gen_func}
F^o(s,t_0,t_1,\ldots;u) := \sum_{g=0}^\infty u^{g-1} \sum_{n=0}^\infty
\frac{\blangle {\gamma^n \delta^k}
\brangle_g^o}{n!k!},
\end{equation}
where $\gamma:=\sum_{i\ge 0} t_i\tau_i$, $\delta:=s\sigma$, and again we use the monomial expansion and the multilinearity in the variables $t_i,s.$

\subsubsection{Open KdV and open Virasoro equations}

The following initial condition follows easily from the definitions (\cite{PST14}):
\begin{gather}\label{eq:open initial condition}
\left.F^o\right|_{t_{\ge 1}=0}=u^{-1}\frac{s^3}{6}+u^{-1}t_0 s.
\end{gather}
In~\cite{PST14} the authors conjectured the following equations:
\begin{align}\label{gvvt}
\frac{\partial F^o}{\partial t_0} =& \sum_{i=0}^{\infty}t_{i+1}\frac{\partial F^o}{\partial t_i} + u^{-1}s,\\
\frac{\partial F^o}{\partial t_1} =& \sum_{i=0}^{\infty}\frac{2i+1}{3}t_{i}\frac{\partial F^o}{\partial t_i} + \frac{2}{3}s\frac{\partial F^o}{\partial s}+\frac{1}{2}.\label{eq:open dilaton}
\end{align}
They were called the open string and the open dilaton equation correspondingly.

Put
$\blangle\blangle\tau_{a_1} \tau_{a_2} \cdots \tau_{a_l} \sigma^k
\brangle\brangle^o :=
\frac{\p^{l+k} F^o}{\p t_{a_1}\p t_{a_2}\cdots\p t_{a_l}\p s^k}.$ The main conjectures in \cite{PST14} are

\begin{conj}[Open analog of Witten's conjecture]\label{conjecture:open KdV}
The following system of equations is satisfied:
\begin{multline}\label{eq:openkdv}
(2n+1)u^{-1}\bblangle \tau_n \bbrangle^o=u\bblangle
\tau_{n-1} \tau_0\bbrangle^c  \bblangle \tau_0\bbrangle^o -\frac{u}{2} \bblangle \tau_{n-1}\tau_0^2 \bbrangle^c+\\
 + 2    \bblangle \tau_{n-1}\bbrangle^o
\bblangle\sigma\bbrangle^o + 2 \bblangle \tau_{n-1} \sigma \bbrangle^o,\quad n\ge 1.
\end{multline}
\end{conj}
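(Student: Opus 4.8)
The plan is to follow the two-step strategy of Kontsevich's proof \cite{Kon92} recalled above, adapted to the open setting. First, using the combinatorial formula of \cite{RT}, which expresses the open intersection numbers \eqref{openproducts} as a weighted sum over ribbon graphs carrying a boundary structure, I would realize the open partition function $\exp(F^o)$, after a Miwa-type substitution $t_n=-(2n-1)!!\,\tr\Lambda^{-2n-1}$, as a single matrix integral. The natural candidate is Kontsevich's Airy integral over $N\times N$ Hermitian matrices, modified by an extra insertion that accounts for the unique boundary component and for the variable $s$ marking boundary points; in diagrammatic terms this insertion introduces the distinguished boundary faces appearing in the graphs of \cite{RT}. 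The first task is therefore to check that the Feynman expansion of the modified integral reproduces \emph{exactly} the weights assigned by the formula of \cite{RT}, so that the integral equals $\exp(F^o)$ as a formal series in $u$, $s$ and the $t_i$.

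Second, I would extract the integrable structure of this open matrix integral. Since Kontsevich's theorem already guarantees that the closed partition function $\tau^c$ is a tau-function of the KdV hierarchy \eqref{eq:kdv equations}, the expectation --- in line with \cite{Bur14a,Bur14b} --- is that $\exp(F^o)$ is, up to normalization, the wave (Baker--Akhiezer) function of the KdV hierarchy determined by $\tau^c$, with $s$ playing the role of the auxiliary spectral variable. Concretely, writing $\Psi:=\exp(F^o)$, I would prove that $\Psi$ satisfies the KdV auxiliary linear problem: an eigenvalue equation for the Lax operator $\mcL$ reconstructed from $F^c$, with spectral parameter a function of $s$, together with the flows $\partial_{t_n}\Psi=(\mcL^{(2n+1)/2})_+\Psi$. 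These linear equations are the manifestation, on the partition-function side, of the symmetry constraints satisfied by the matrix integral, so they should follow from the same contour-integration (change-of-variables) argument that yields the Virasoro constraints in the closed case.

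Third, I would convert the linear problem into \eqref{eq:openkdv}. Since $\log\Psi=F^o$, the flow $\partial_{t_n}\Psi=(\mcL^{(2n+1)/2})_+\Psi$ reads, after dividing by $\Psi$, as an identity for $\partial_{t_n}F^o$ whose right-hand side is a differential expression in $F^o$ with coefficients built from the potential of $\mcL$, that is, from the closed two-point function $\bblangle\tau_0^2\bbrangle^c$ and its $t$-derivatives. Expanding the fractional power $(\mcL^{(2n+1)/2})_+$ and using the closed KdV equations \eqref{eq:kdv equations} and the string equation \eqref{eq:string} to rewrite its coefficients, the closed factors $\bblangle\tau_{n-1}\tau_0\bbrangle^c$ and $\bblangle\tau_{n-1}\tau_0^2\bbrangle^c$ of \eqref{eq:openkdv} should emerge, while the derivatives of $F^o=\log\Psi$ supply the open factors $\bblangle\tau_0\bbrangle^o$, $\bblangle\sigma\bbrangle^o$, $\bblangle\tau_{n-1}\bbrangle^o$ and $\bblangle\tau_{n-1}\sigma\bbrangle^o$; the open initial condition \eqref{eq:open initial condition} then pins down the normalization and checks the base of the recursion.

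The main obstacle I expect is the rigorous analytic matching in the first step together with the justification of the integrable structure in the second. One must control the genus-type expansion of the open matrix integral --- which, because of the boundary, mixes integer and half-integer powers of $u$, reflecting the factor $2^{-(g+k-1)/2}$ in \eqref{openproducts} --- and show that it converges termwise to the generating series of the \cite{RT} weights rather than merely resembling it formally. The boundary insertion typically breaks the manifest Hermitian symmetry of Kontsevich's model, so the saddle-point and contour arguments that produce the linear constraints require extra care. Once the identification of $\Psi$ with the KdV wave function is established, the passage to \eqref{eq:openkdv} is a formal computation driven by the already-proven closed equations \eqref{eq:kdv equations} and \eqref{eq:string}.
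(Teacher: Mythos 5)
Your overall strategy --- combinatorial formula $\to$ matrix model $\to$ KdV wave function $\to$ open KdV via the Lax flows of \cite{Bur14a,Bur14b} --- is indeed the skeleton of the paper's proof, but two essential ideas are missing, and without them the middle of your argument does not go through.

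First, the combinatorial formula \eqref{eq:combinatorial formula} sums over \emph{nodal} ribbon graphs, and the nodes (ordered pairs of identified boundary marked points, with a legal and an illegal side) are not produced by the Feynman expansion of any single Gaussian matrix integral of Kontsevich type. Your ``extra insertion that accounts for the unique boundary component'' does not address this (nor is the boundary component unique). The paper circumvents the problem by writing a matrix integral (Proposition~\ref{proposition:matrix integral}) only for an auxiliary generating function $f^o_N$ over \emph{non-nodal} graphs, with a second formal variable $s_-$ counting illegal boundary marked points, and then reconstructs $\tau^o_N$ by applying $e^{\frac12\p^2/\p s\,\p s_-}$ and setting $s_-=0$ (Lemma~\ref{lemma:relation}); this operator is exactly what re-glues the legal and illegal sides into nodes. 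Some device of this kind is unavoidable, and your proposal has none.

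Second, your identification of the integrable structure is off in a way that matters. It is not $\Psi=\exp(F^o)$ that is ``up to normalization'' the KdV wave function with $s$ as spectral variable: the paper's key analytic result (Proposition~\ref{proposition:main}, proved via the Harish--Chandra--Itzykson--Zuber formula and unitary averaging) is that the \emph{Fourier transform in $s$} of $\tau^o_N$, with kernel $e^{\frac{i}{6}s^3-\frac12 s^2z}$, equals the closed Kontsevich integral in $N+1$ variables $\diag(\lambda_1,\dots,\lambda_N,z)$ times an explicit determinantal factor; this is what identifies $\Phi^{form}_s[\tau^o]$ with the wave function $\psi|_{s_*=0}$, the spectral parameter being the dual variable $z$, not $s$. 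The equations \eqref{eq:first}--\eqref{eq:second} for $e^{F^o}$ itself are then recovered by pulling the Lax flows back through the Fourier transform using its injectivity (Lemma~\ref{lemma:injectivity}) and its intertwining of $\p/\p s$ with multiplication by $z^2/2$. Your appeal to ``the same contour-integration argument that yields the Virasoro constraints in the closed case'' does not supply this step. Your third step (Lax flows $\Rightarrow$ \eqref{eq:openkdv}) is fine, but it is precisely the content of \cite{Bur14a,Bur14b}, which the paper quotes rather than reproves.
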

\noindent In~\cite{PST14} equations~\eqref{eq:openkdv} were called the open KdV equations. It is easy to see that~$F^o$ is fully determined by the open KdV equations~\eqref{eq:openkdv}, the initial condition~\eqref{eq:open initial condition} and the closed free energy~$F^c$.

Let $\tau^o:=\exp(F^c+F^o)$ be the {\it open partition function}. In~\cite{PST14} the authors introduced the following modified operators:
%\unSasha{Introduce differential operators $\mcL_n$, $n\ge -1$, by}
\begin{gather}\label{eq:open Virasoro operator}
\mcL_n := L_n + u^n s \frac{\partial^{n+1}}{\partial s^{n+1}}+\frac{3n+3}{4}u^n\frac{\partial^{n}}{\partial s^{n}},\quad n\ge -1,
\end{gather}
where the operators~$L_n$ were defined in Section~\ref{subsubsection:closed Virasoro}.

\begin{conj}[Open Virasoro conjecture]\label{conjecture:open Virasoro}
The operators $\mathcal{L}_n$, $n\ge -1$, annihilate the open partition function:
\begin{gather}\label{eq:open virasoro}
\mcL_n\tau^o = 0,\quad n\ge -1.
\end{gather}
\end{conj}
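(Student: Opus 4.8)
The plan is to derive the open Virasoro equations from three inputs that are already available: the closed Virasoro equations~\eqref{vira} (Kontsevich's theorem), the open string and dilaton equations~\eqref{gvvt}--\eqref{eq:open dilaton}, and the open KdV equations~\eqref{eq:openkdv} (Conjecture~\ref{conjecture:open KdV}, the main result of this paper). Since the operator $L_n$ acts only on the variables $t_i$, whereas the two extra summands in~\eqref{eq:open Virasoro operator} act only on $s$, I would first divide $\mcL_n\tau^o=0$ by $\tau^o=\exp(F^c+F^o)$ and then subtract the closed constraint $L_n\tau^c/\tau^c=0$. What remains is a partial differential equation for $F^o$ alone, whose coefficients involve the closed correlators $\partial_{t_i}\partial_{t_j}F^c$; the goal is to recognise this equation as a consequence of the open string, dilaton, and KdV equations.

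The organising principle is that the operators $\mcL_n$ satisfy the Virasoro commutation relations
\begin{equation*}
[\mcL_m,\mcL_n]=(m-n)\mcL_{m+n},\qquad m,n\ge -1.
\end{equation*}
The operators $L_n$ already satisfy these relations, and they commute with the $s$-sector $M_n:=u^n\bigl(s\,\partial_s^{n+1}+\tfrac{3(n+1)}{4}\,\partial_s^{n}\bigr)$ because the two act on disjoint sets of variables; it therefore suffices to verify $[M_m,M_n]=(m-n)M_{m+n}$. A short Leibniz computation reduces this to the numerical identity $\tfrac{3(m+1)}{4}m-\tfrac{3(n+1)}{4}n=(m-n)\tfrac{3(m+n+1)}{4}$, which holds. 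As a consequence, if $\mcL_a\tau^o=\mcL_b\tau^o=0$ for some $a\ne b$, then $(a-b)\mcL_{a+b}\tau^o=[\mcL_a,\mcL_b]\tau^o=0$, so $\mcL_{a+b}\tau^o=0$. Starting from the four base cases $n\in\{-1,0,1,2\}$, the relation $[\mcL_1,\mcL_n]=(1-n)\mcL_{n+1}$ propagates the vanishing to all $n\ge 3$ by induction (indeed already $\{-1,2\}$ generates every index $n\ge-1$). Hence it is enough to establish these finitely many base cases.

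For $n=-1,0$ the reduction is immediate. A direct expansion identifies $\mcL_{-1}\tau^o=0$ with the open string equation~\eqref{gvvt}: subtracting the closed string equation~\eqref{eq:string}, which is exactly $L_{-1}\tau^c=0$, leaves $\partial_{t_0}F^o=\sum_i t_{i+1}\partial_{t_i}F^o+u^{-1}s$. Likewise $\mcL_0\tau^o=0$ reduces, after subtracting $L_0\tau^c=0$, to the open dilaton equation~\eqref{eq:open dilaton}. For $n=1,2$ the expansion is heavier: besides the linear terms, the second-order part of $L_n$ contributes, after subtraction of the closed constraint, the cross terms $\partial_{t_i}F^c\,\partial_{t_{n-1-i}}F^o$ and $\partial_{t_i}F^o\,\partial_{t_{n-1-i}}F^o$, while $\partial_s^{n+1}e^{F^o}/e^{F^o}$ produces the Bell-polynomial combinations of the $s$-derivatives of $F^o$. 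The task is to reorganise these expressions into the right-hand side of the open KdV equation~\eqref{eq:openkdv}.

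This last matching is where I expect the main difficulty to lie. The open KdV and open Virasoro systems are not equivalent equation by equation, so deriving $\mcL_1\tau^o=0$ and $\mcL_2\tau^o=0$ will require combining several open KdV equations with the open string and dilaton equations to eliminate the superfluous descendent derivatives, and using the closed Virasoro constraints to cancel all purely closed contributions. The delicate bookkeeping is the $u$-grading: $F^c$ carries powers $u^{2g-2}$ and $F^o$ carries $u^{g-1}$, and these must be matched against the explicit factor $u^n$ in front of the $s$-sector of $\mcL_n$. I anticipate that the cleanest route is to encode the whole open KdV family in a single generating-function identity and to show that its ``integrated'' form is exactly the Virasoro constraints at the base indices; the commutation relations established above then propagate the result to all $n\ge-1$.
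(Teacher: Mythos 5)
Your reduction via the Virasoro algebra is correct as far as it goes: the computation $[M_m,M_n]=(m-n)M_{m+n}$ is right, $[L_m,L_n]=(m-n)L_{m+n}$ is standard, the two sectors act on disjoint variables, and the indices $\{-1,2\}$ do generate all $n\ge-1$; the cases $n=-1,0$ reduce to the open string and dilaton equations exactly as you say (the paper makes the same observation right after stating Conjecture~\ref{conjecture:open Virasoro}). The genuine gap is the base cases $n=1,2$, which you explicitly leave undone, and these carry essentially all of the content: deriving the open Virasoro constraints from the open KdV equations is not a ``reorganisation'' but the subject of the separate paper~\cite{Bur14a}, and it cannot succeed from the inputs you list as stated. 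Two concrete obstructions: (i) the open KdV equations alone do not pin down $F^o$ --- you must also use the initial condition~\eqref{eq:open initial condition}, which your argument never invokes and without which the implication ``open KdV $\Rightarrow$ open Virasoro'' is simply false; (ii) $\mcL_2$ involves $\partial_s^3$, while the open KdV equations control only first $s$-derivatives of $F^o$ (through $\bblangle\sigma\bbrangle^o$ and $\bblangle\tau_{n-1}\sigma\bbrangle^o$); to control higher $s$-derivatives one needs the $s$-flow~\eqref{eq:s-flow}, which the paper stresses is strictly stronger than the open KdV system and is recovered for $F^o$ only through the uniqueness argument of~\cite{Bur14a}.

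The paper's route is different and, at this stage, essentially free. Having established the wave-function identity $\Phi^{form}_s[\tau^o]=G_z[\tau^c]e^{\xi^c}$ (Corollary~\ref{corollary:main}), it either (a) proves the half Burgers--KdV hierarchy in Lax form for $F^o$ and quotes~\cite{Bur14a} for the passage to the Virasoro constraints, or (b) derives $\mcL_n\tau^o=0$ for all $n\ge-1$ simultaneously: the closed Virasoro equations turn $L_n\bigl(G_z[\tau^c]e^{\xi^c}\bigr)$ into a first-order differential expression in $z$, which Lemmas~\ref{lemma:Fourier of derivative} and~\ref{lemma:Fourier and multiplication} identify with $-\Phi^{form}_s\bigl[(\mcL_n-L_n)\tau^o\bigr]$, and injectivity (Lemma~\ref{lemma:injectivity}) finishes the argument. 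If you want to salvage your plan, the honest fix for the base cases is to replace the hand-derivation by uniqueness: $F^o$ satisfies open KdV together with~\eqref{eq:open initial condition}, the function $\tF^o$ of~\cite{Bur14a} satisfies the same system as well as the open Virasoro equations, and the system has a unique solution, so $F^o=\tF^o$. But once you argue this way you obtain all $n\ge-1$ at once, and the commutator reduction becomes superfluous.
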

\noindent In~\cite{PST14} equations~\eqref{eq:open virasoro} were called the open Virasoro equations. Again it is easy to see that the open free energy~$F^o$ is fully determined by the open
Virasoro equations~\eqref{eq:openkdv}, the initial condition~
$$
\left.F^o\right|_{t_{\ge 0}=0}=u^{-1}\frac{s^3}{6}
$$
and the closed free energy~$F^c$.

From the closed string equation~\eqref{eq:string} it immediately follows that the open string equation~\eqref{gvvt} is equivalent to~\eqref{eq:open virasoro}, for $n=-1$. Moreover, from the equation~$L_0\tau^c=0$ it follows that the open dilaton equation~$\eqref{eq:open dilaton}$ is equivalent to~\eqref{eq:open virasoro}, for $n=0$.

\begin{rmk}
More precisely, in~\cite{PST14} it was conjectured that there exists a definition of open intersection numbers for $g>0$, for which the open KdV and open Virasoro equations hold. The definition was later
given in~\cite{JSRTa}.
\end{rmk}

Although it was not clear at all that the open KdV and the open Virasoro equations are compatible, in~\cite{Bur14a} it was proved that they indeed have a common solution.

For $g=0$ the conjectures were proved in~\cite{PST14}. In~\cite{JSRTa} the conjectures are proved for $g=1$ and the open string~\eqref{gvvt} and the open dilaton~\eqref{eq:open dilaton} equations are proved for all $g.$

The main result of this paper is the following theorem.
\begin{thm}\label{theorem:main}
Conjectures~\ref{conjecture:open KdV} and~\ref{conjecture:open Virasoro} are true.
\end{thm}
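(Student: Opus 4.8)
The plan is to prove Conjecture~\ref{conjecture:open KdV} directly from a matrix-integral representation of the open intersection numbers, and then to deduce Conjecture~\ref{conjecture:open Virasoro} formally. The formal deduction comes first. By the observations following~\eqref{eq:open virasoro}, the open free energy~$F^o$ is uniquely determined by the open KdV equations~\eqref{eq:openkdv} together with the initial condition~\eqref{eq:open initial condition} and the closed free energy~$F^c$, and it is likewise uniquely determined by the open Virasoro equations~\eqref{eq:open virasoro}, the corresponding initial condition and~$F^c$. By~\cite{Bur14a} these two systems admit a common solution~$\Phi$. Hence, once one proves that the geometrically defined~$F^o$ of~\eqref{openproducts} satisfies the open KdV equations, the uniqueness statement forces $F^o=\Phi$, and since~$\Phi$ also satisfies the open Virasoro equations, Conjecture~\ref{conjecture:open Virasoro} follows at once. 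The entire problem therefore reduces to establishing~\eqref{eq:openkdv} for the geometric intersection numbers.

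For the open KdV equations I would mimic the two-step structure of Kontsevich's proof recalled in Section~\ref{konts}, replacing the closed combinatorial formula~\eqref{eq:Kontsevich's formula} by the open combinatorial formula of~\cite{RT}. The first step is to translate that formula into a matrix integral: one builds a deformation of the Kontsevich matrix model whose Feynman-diagram expansion, under a Miwa-type substitution expressing the~$t_i$ through an external matrix~$\Lambda$ and the boundary variable~$s$ through an extra spectral parameter, reproduces the weighted sum over the bordered ribbon graphs of~\cite{RT}. The boundary of the surface must be encoded by an additional insertion in the integrand, for instance a vector integration or a determinant factor, arranged so that the logarithm of the resulting integral splits as $F^c+F^o$, the purely closed part recovering the ordinary Kontsevich integral.

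The second step is the analytic heart of the argument. By Kontsevich's theorem, $\exp(F^c)$ is a tau-function of the KdV hierarchy. I would then show, by asymptotic analysis of the matrix integral, that $\exp(F^o)$ is, up to the normalizations in~\eqref{openproducts}, the associated Baker--Akhiezer wave function of this tau-function, with~$s$ playing the role of the spectral parameter. The wave function satisfies the auxiliary linear system of the KdV hierarchy, and I would verify that this linear system, after using the closed string equation~\eqref{eq:string} to treat the lowest flow, is precisely~\eqref{eq:openkdv}. The mixed appearance of closed correlators $\bblangle\tau_{n-1}\tau_0\bbrangle^c$ and open correlators on the right-hand side of~\eqref{eq:openkdv} is exactly the shape of such a wave-function equation, which makes this identification plausible.

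I expect the main obstacle to be this last identification. The open sector sits as a genuinely subleading term in the large-size expansion of the matrix integral, so isolating it and controlling its asymptotics rigorously, rather than merely formally, is where the non-trivial analytical tools of the closed theory must be genuinely extended. In addition, matching the extracted linear equations to the precise form of~\eqref{eq:openkdv} requires tracking the delicate $2^{-(g+k-1)/2}$ normalizations and the half-integer index shifts inherited from~\eqref{openproducts} and~\eqref{eq:Kontsevich's formula}, and pinning down the exact dictionary between the spectral parameter and the variable~$s$. Getting these bookkeeping factors correct, so that the wave-function equation lands on~\eqref{eq:openkdv} on the nose, is the step most likely to require care.
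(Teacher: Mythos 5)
Your proposal follows essentially the same route as the paper: a matrix model with a determinant-type boundary insertion built from the combinatorial formula of~\cite{RT}, identification of the (suitably transformed) open partition function with the Baker--Akhiezer wave function of the closed KdV tau-function, derivation of the open KdV equations from the wave function's linear system, and deduction of the open Virasoro equations from the uniqueness and compatibility results of~\cite{Bur14a}. The only adjustment to your dictionary is that the spectral parameter is not $s$ itself but a variable $z$ Fourier-dual to $s$ --- the precise statement being $\Phi^{form}_s[\tau^o]=G_z[\tau^c]e^{\xi^c}$, the wave function restricted to $s_*=0$ --- and the steps you correctly flag as the main obstacles (isolating the open sector and matching normalizations) are exactly the content of Propositions~\ref{proposition:matrix integral} and~\ref{proposition:main} and Corollary~\ref{corollary:main}.
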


\subsubsection{Burgers-KdV hierarchy}

Let $F$ be a power series in the variables $s,t_0,t_1,\ldots$ with the coefficients from $\mbC[u,u^{-1}]$. In~\cite{Bur14a} the following system of equations was introduced:
\begin{align}
\frac{2n+1}{2u^2}F_{t_n}&=\left(\frac{1}{2}\frac{\partial^2}{\partial t_0^2}+F_{t_0}\frac{\partial}{\partial
t_0}+\frac{1}{2}F_{t_0}^2+\frac{1}{2}F_{t_0,t_0}+F^c_{t_0,t_0}\right)F_{t_{n-1}}+\frac{1}{2}F_{t_0}F^c_{t_0,t_{n-1}} + \frac{3}{4}F^c_{t_0,t_0,t_{n-1}},\label{eq:t-flows}\\
F_s& = u\left(\frac{1}{2}F_{t_0}^2+\frac{1}{2}F_{t_0,t_0}+F^c_{t_0,t_0}\right).\label{eq:s-flow}
\end{align}
It was called the half of the Burgers-KdV hierarchy. This system is obviously stronger than the system of the open KdV equations~\eqref{eq:openkdv}. In~\cite{Bur14a} it was actually
shown that the half of the Burgers-KdV hierarchy is equivalent to the open KdV equations together with equation~\eqref{eq:s-flow}.

Denote by~$\tF^o$ a unique solution of system~\eqref{eq:t-flows}-\eqref{eq:s-flow} specified by the initial condition
$$
\tF^o|_{t_{\geq 1}=0,s=0}=0.
$$
In~\cite{Bur14a} it was shown that~$\tF^o$ satisfies the open KdV equations, the initial condition~\eqref{eq:open initial condition} and the open Virasoro equations. This proved the equivalence of the open analog of Witten's conjecture and the open Virasoro conjecture. This also shows that Theorem~\ref{theorem:main} immediately implies the following corollary.
\begin{cor}
The open free energy~$F^o$ satisfies the half of the Burgers-KdV hierarchy.
\end{cor}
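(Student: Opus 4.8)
The plan is to establish the identity $F^o=\tF^o$ by a uniqueness argument, after which the corollary is immediate from the definition of $\tF^o$.

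First I would invoke Theorem~\ref{theorem:main}: since Conjecture~\ref{conjecture:open KdV} is now known to hold, the open free energy $F^o$ satisfies the open KdV equations~\eqref{eq:openkdv}. Together with the initial condition~\eqref{eq:open initial condition}, and given the closed free energy $F^c$, these data determine $F^o$ uniquely; this is precisely the uniqueness statement recorded immediately after Conjecture~\ref{conjecture:open KdV}, which I am free to quote. Concretely, the open KdV recursion expresses the $t_n$-dependence of $F^o$ for $n\ge 1$ in terms of $t_0,\ldots,t_{n-1}$, the variable $s$, and derivatives of $F^c$, so that \eqref{eq:open initial condition} (which pins down the $t_{\ge 1}=0$ slice) propagates inductively to recover every coefficient.

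Next I would compare $F^o$ with $\tF^o$. By its definition, $\tF^o$ is the unique solution of the Burgers-KdV system~\eqref{eq:t-flows}-\eqref{eq:s-flow} subject to $\tF^o|_{t_{\ge 1}=0,\,s=0}=0$, and by the results of~\cite{Bur14a} it satisfies the open KdV equations~\eqref{eq:openkdv} together with the stronger initial condition~\eqref{eq:open initial condition}; crucially, the closed free energy $F^c$ entering \eqref{eq:t-flows}-\eqref{eq:s-flow} and the open KdV equations is the same one attached to $F^o$. Hence $F^o$ and $\tF^o$ satisfy the same uniquely-determining data, so $F^o=\tF^o$.

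Finally, since $\tF^o$ solves the half of the Burgers-KdV hierarchy by construction, the equality $F^o=\tF^o$ yields the corollary at once. The only genuine step is the identification $F^o=\tF^o$, and the single point requiring care is to confirm that both series are governed by the \emph{same} closed free energy $F^c$, so that the shared open KdV equations and the shared initial condition really do single out one series; no new computation is needed beyond citing Theorem~\ref{theorem:main} and the construction of $\tF^o$ in~\cite{Bur14a}.
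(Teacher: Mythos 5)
Your argument is correct and is exactly the one the paper intends: the remark preceding the corollary identifies $F^o$ with $\tF^o$ via the uniqueness of a solution of the open KdV equations~\eqref{eq:openkdv} with initial condition~\eqref{eq:open initial condition} for a given $F^c$, using the result of~\cite{Bur14a} that $\tF^o$ satisfies both. You have merely made explicit the uniqueness step that the paper leaves implicit in the phrase ``immediately implies.''
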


Consider more variables $s_1,s_2,\ldots$ and let $s_0:=s$. Let $F$ be a power series in the variables $s_0,s_1,\ldots,,t_0,t_1,\ldots$ with the coefficients from $\mbC[u,u^{-1}]$. Let
us extend the half of the Burgers-KdV hierarchy by the following equations:
\begin{gather}\label{eq:higher s-flows}
\frac{n+1}{u^2}F_{s_{n}}=\left(\frac{1}{2}\frac{\partial^2}{\partial t_0^2}+F_{t_0}\frac{\partial}{\partial
t_0}+\frac{1}{2}F_{t_0}^2+\frac{1}{2}F_{t_0,t_0}+F^c_{t_0,t_0}\right)F_{s_{n-1}},\quad n\ge 1.
\end{gather}
In~\cite{Bur14a} the extended system~\eqref{eq:t-flows}-\eqref{eq:higher s-flows} was called the (full) Burgers-KdV hierarchy. Let~$\tF^{o,ext}$ be a unique solution of it specified by the
initial condition
\[
\tF^{o,ext}|_{t_{\geq 1}=0,s_{\geq 0}=0}=0.
\]
We obviously have~$\tF^{o,ext}|_{s_{\geq 1}=0}=\tF^o$. In~\cite{Bur14b} it was proved that the function $$\ttau^{o,ext}:=\exp(\tF^{o,ext}+F^c)$$ satisfies the following extended Virasoro equations:
\begin{gather}\label{eq:extended Virasoro}
\mcL^{ext}_n\ttau^{o,ext}=0,\quad n\ge -1,
\end{gather}
where
\begin{gather*}
\mcL_n^{ext} := L_n+\sum_{i\geq 0} \frac{(i+n+1)!}{i!}s_i\frac{\partial}{\partial s_{n+i}}+u\frac{3(n+1)!}{4}\frac{\partial}{\partial s_{n-1}}+
\delta_{n,-1}u^{-1}s+\delta_{n,0}\frac{3}{4}.
\end{gather*}
Here we, by definition, put $\frac{\d}{\d s_{-2}}:=\frac{\d}{\d s_{-1}}:=0$.

In~\cite{Bur14a} it was conjectured that, by adding descendents for boundary marked points, one can geometrically define intersection numbers which will be the coefficients of
$\tF^{o,ext}$. In~\cite{JSRTb} J.~Solomon an R.T. give a complete proposal for the construction in all genera, and it is proved to produce the correct intersection numbers in genus $0.$ Moreover, with this proposal the extended open free energy $F^{o,ext}$ is defined, as well as the extended open partition function
\[
\tau^{o,ext}:=\exp(F^c+F^{o,ext}).
\]
It is proved in \cite{JSRTb} that the following equations hold
\[
\frac{\partial}{\partial s_n}\tau^{o,ext} = \frac{u^n}{(n+1)!}\frac{\partial^{n+1}}{\partial s_0^{n+1}}\tau^{o,ext},\quad\text{$n\ge 1$}.
\]
In \cite{Bur14b}, Section 5.2, it is shown that $\ttau^{o,ext}$ satisfies these equations as well. Thus, Theorem~\ref{theorem:main} implies that $\tau^{o,ext}=\ttau^{o,ext}$ and we immediately obtain the following generalization of Theorem~\ref{theorem:main}.
\begin{thm}\label{theorem:extended main}
The extended open free energy $F^{o,ext}$ is a solution of the full Burgers-KdV hierarchy.
\end{thm}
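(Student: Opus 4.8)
The plan is to deduce the theorem from the single identity $\tau^{o,ext}=\ttau^{o,ext}$: once this is in hand, Theorem~\ref{theorem:extended main} is immediate, because $\tF^{o,ext}$ (hence $\ttau^{o,ext}$) solves the full Burgers--KdV hierarchy~\eqref{eq:t-flows}--\eqref{eq:higher s-flows} by construction, so $F^{o,ext}=\tF^{o,ext}$ does too. First I would establish the non-extended identity $F^o=\tF^o$. By Theorem~\ref{theorem:main} the open KdV equations~\eqref{eq:openkdv} hold for $F^o$, and \eqref{eq:open initial condition} gives $\left.F^o\right|_{t_{\geq 1}=0,\,s=0}=0$; since $\tF^o$ satisfies the same open KdV equations with the same initial condition, and since these data (together with $F^c$) determine the open free energy uniquely, I would conclude $F^o=\tF^o$.

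Next I would match the two candidate partition functions on the slice $s_{\geq 1}=0$. Using $\left.\tF^{o,ext}\right|_{s_{\geq 1}=0}=\tF^o$ (recorded above) and the geometric reduction $\left.F^{o,ext}\right|_{s_{\geq 1}=0}=F^o$, the identity $F^o=\tF^o$ yields
\[
\left.\tau^{o,ext}\right|_{s_{\geq 1}=0}=\exp\!\left(F^c+F^o\right)=\exp\!\left(F^c+\tF^o\right)=\left.\ttau^{o,ext}\right|_{s_{\geq 1}=0}.
\]
To propagate this agreement off the slice, I would use the higher flows: both $\tau^{o,ext}$ (by~\cite{JSRTb}) and $\ttau^{o,ext}$ (by~\cite{Bur14b}, Section~5.2) satisfy the linear equations $\partial_{s_n}\tau=\frac{u^n}{(n+1)!}\partial_{s_0}^{\,n+1}\tau$ for $n\geq 1$. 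These express each $\partial_{s_n}$ with $n\geq 1$ through derivatives in the single variable $s_0$, so by repeated differentiation and restriction to $s_{\geq 1}=0$ every Taylor coefficient in $s_1,s_2,\ldots$ is recovered as an explicit differential operator in $s_0$ applied to $\left.\tau\right|_{s_{\geq 1}=0}$. Thus a solution of these flows is uniquely determined, as a formal power series, by its restriction to $s_{\geq 1}=0$, and the matching of slices forces $\tau^{o,ext}=\ttau^{o,ext}$, hence $F^{o,ext}=\tF^{o,ext}$.

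The genuinely delicate input, I expect, is the geometric reduction $\left.F^{o,ext}\right|_{s_{\geq 1}=0}=F^o$: this is a statement about the construction of~\cite{JSRTb}, asserting that switching off the higher boundary descendents returns exactly the original open descendent theory together with its normalization, and it is not a formal manipulation. The remaining ingredients are either cited verbatim (the two families of $s_n$-flows and the uniqueness of the open free energy from the open KdV equations together with~\eqref{eq:open initial condition}) or amount to the routine power-series uniqueness argument for the linear flows, so the real content of the proof sits in this reduction step.
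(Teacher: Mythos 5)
Your proposal is correct and follows essentially the same route as the paper: the paper likewise deduces $\tau^{o,ext}=\ttau^{o,ext}$ from Theorem~\ref{theorem:main} by matching the two partition functions on the slice $s_{\geq 1}=0$ and propagating via the linear flows $\partial_{s_n}\tau=\frac{u^n}{(n+1)!}\partial_{s_0}^{n+1}\tau$ established in \cite{JSRTb} and \cite{Bur14b}. You have merely spelled out the steps the paper leaves implicit, including correctly identifying the geometric reduction $\left.F^{o,ext}\right|_{s_{\geq 1}=0}=F^o$ as the input supplied by the construction of \cite{JSRTb}.
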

\begin{rmk}
From the recent result of A.~Alexandrov~\cite{Ale14} it also follows that the extended open partition function~$\tau^{o,ext}$ becomes a tau-function of the KP hierarchy after the change of variables $t_n=(2n+1)!!T_{2n+1}$ and $s_n=2^{n+1}(n+1)!T_{2n+2}$.
\end{rmk}

\subsection{Combinatorial formula for the open intersection numbers}\label{ocomb}

In~\cite{RT} R.T. proved a combinatorial formula for the geometric models which were defined in~\cite{PST14,JSRTa}. He showed that all these intersection numbers can be calculated
as sums of amplitudes of diagrams which will be described below. In this paper a matrix model is constructed out of this combinatorial formula. Using this matrix model we prove our main Theorem~\ref{theorem:main}. R.T. also derived an extended formula for the intersection numbers of \cite{JSRTb}, and it will appear in a future paper.

A topological $(g,k,l)$-surface with boundary $\Sigma,$ is a topological connected oriented surface with non-empty boundary, genus $g,~k$ boundary marked points $\{x_i\}_{i\in[k]},$ and $l$ internal marked points $\{z_i\}_{i\in[l]}.$
By genus we mean, as usual in the open theory, the doubled genus, that is, the genus of the doubled surface obtained by gluing two copies of $\Sigma$ along $\partial\Sigma.$
We require the stability condition
\[
2g-2+k+2l>0.
\]

\begin{definition}
Let $g,k,l$ be non-negative integers such that $2g-2+k+2l>0,~A$ be a finite set and $\alpha:[l]\to A$ a map. $\alpha,A$ will be implicit in the definition. A \emph{$(g,k,l)$-ribbon graph with boundary} is an
%\newRan{isomorphism class of embeddings}\unRan{embedding}\footnote{Sasha - is t better like this or before?}
embedding $\iota:G\to\Sigma$ of a connected graph~$G$ into a $(g,k,l)$-surface with boundary~$\Sigma$ such that

\begin{itemize}

\item $\{x_i\}_{i\in [k]}\subseteq \iota(V(G))$, where $V(G)$ is the set of vertices of~$G$. We henceforth consider $\{x_i\}$ as vertices.

\item The degree of any vertex $v\in V(G)\setminus\{x_i\}$ is at least $3$.

\item $\partial\Sigma\subseteq \iota(G)$.

\item If $l \geq 1,$ then $$\Sigma\setminus\iota(G)=\coprod_{i\in [l]} D_i,$$ where each $D_i$ is a topological open disk, with $z_i\in D_i$. We call the disks $D_i$ faces.

\item If $l=0$, then $\iota(G)=\partial\Sigma$.

\end{itemize}
The genus $g(G)$ of the graph~$G$ is the genus of $\Sigma$. The number of the boundary components of~$G$ or~$\Sigma$ is denoted by $b(G)$ and $v_I(G)$ stands for the number of the internal vertices. We denote by~$\Faces(G)$ the set of faces of the graph~$G,$ and we consider $\alpha$ as a map $$\alpha\colon\Faces(G)\to A,$$
by defining for $f\in\Faces(G),~\alpha(f):=\alpha(i),$ where $z_i$ is the unique internal marked point in~$f.$ The map $\alpha$ is called the labeling of $G.$
Denote by~$V_{BM}(G)$ the set of boundary marked points~$\{x_i\}_{i\in [k]}.$

Two ribbon graphs with boundary $\iota\colon G\to\Sigma,~\iota'\colon G'\to\Sigma'$ are isomorphic, if there is an orientation preserving homeomorphism
$\Phi\colon(\Sigma,\{z_i\},\{x_i\})\to(\Sigma',\{z'_i\},\{x'_i\}),$ and an isomorphism of graphs $\phi\colon G\to G'$, such that
\begin{enumerate}
\item
$\iota'\circ\phi = \Phi\circ\iota.$
\item
$\phi(x_i)=x'_i,$ for all $i\in[k].$
\item
$\alpha'(\phi(f))=\alpha(f),$
where $\alpha,\alpha'$ are the labelings of $G,G'$ respectively and~$f\in\Faces(G)$ is any face of the graph~$G.$
\end{enumerate}
Note that in this definition we do not require the map~$\Phi$ to preserve the numbering of the internal marked points.

A ribbon graph is \emph{critical}, if
\begin{itemize}

\item Boundary marked points have degree $2$.

\item All other vertices have degree $3$.

\item If $l=0,$ then $g=0$ and $k=3.$

\end{itemize}
A $(0,3,0)-$ribbon graph with boundary is called a \emph{ghost}.
\end{definition}
In Figure \ref{fig:nonnodal}~two critical ribbon graphs are shown, the right one is a ghost.
We draw internal edges as thick (ribbon) lines, while boundary edges are usual lines.
Note that not all boundary vertices are boundary marked points.
We draw parallel lines inside the ghost, to emphasize that the face bounded by the boundary is a special face, without a marked point inside.
\begin{figure}[t]
\centering
\includegraphics[scale=.8]{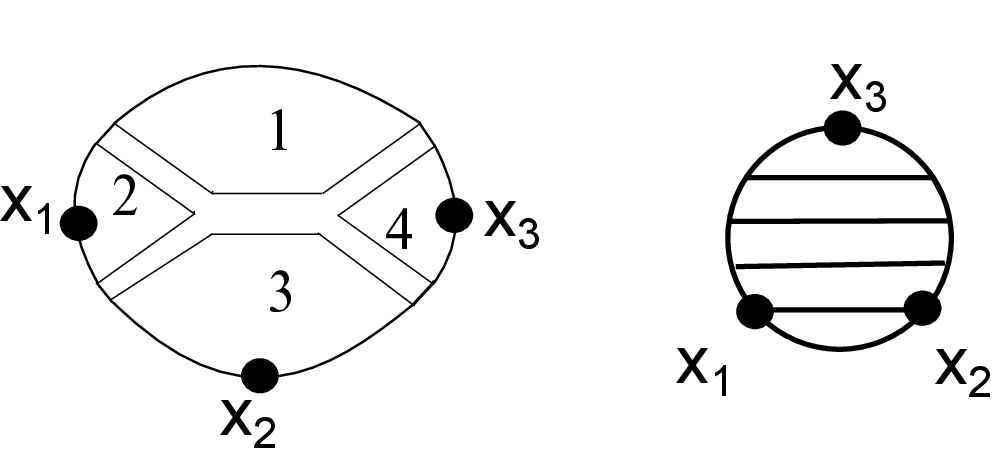}
\caption{Critical ribbon graphs.}
\label{fig:nonnodal}
\end{figure}

\begin{definition}
A \emph{nodal ribbon graph with boundary} is $G=\left(\coprod_i G_i\right)/N$, where
\begin{itemize}
\item $\iota_i\colon G_i\to\Sigma_i$ are ribbon graphs with boundary.
\item $N\subset (\cup_i V(G_i))\times(\cup_i V(G_i))$ is a set of \emph{ordered} pairs of boundary marked points $(v_1,v_2)$ of the $G_i$'s which we identify.
\end{itemize}
We require that
\begin{itemize}
\item $G$ is a connected graph,
\item Elements of $N$ are disjoint as sets (without ordering).
\end{itemize}

After the identification of the vertices~$v_1$ and~$v_2$ the corresponding point in the graph is called a node. The vertex~$v_1$ is called the legal side of the node and the vertex~$v_2$ is called the illegal side of the node.

The set of edges $\Edges(G)$ is composed of the internal edges of the $G_i$'s and of the boundary edges.
The boundary edges are the boundary segments between successive vertices which are not the illegal sides of nodes. For any boundary edge $e$ we denote by $m(e)$ the number of the
illegal sides of nodes lying on it. The boundary marked points of~$G$ are the boundary marked points of~$G_i$'s, which are not nodes. The set of boundary marked points of~$G$ will
be denoted by~$V_{BM}(G)$ also in the nodal case.

A nodal graph~$G=\left(\coprod_i G_i\right)/N$ is \emph{critical}, if
\begin{itemize}

\item All of its components~$G_i$ are critical.

\item Any boundary component of $G_i$ has an odd number of points that are the boundary marked points or the legal sides of nodes.

\item Ghost components do not contain the illegal sides of nodes.

\end{itemize}
A nodal ribbon graph with boundary is naturally embedded into the nodal surface $\Sigma=\left(\coprod_i\Sigma_i\right)/N$. The (doubled) genus of $\Sigma$ is called the genus of the graph. The
notion of an isomorphism is also as in the non-nodal case.
\end{definition}
\begin{rmk}
The genus of a closed, and in particular doubled, nodal surface $\Sigma$ is the genus of the smooth surface obtained by smoothing all nodes of $\Sigma.$
\end{rmk}

In Figure~\ref{nodal}~there is a critical nodal graph of genus~$0$, with~$5$ boundary marked points, $6$ internal marked points, three components, one of them is a ghost, two nodes, where a
plus sign is drawn next to the legal side of a node and a minus sign is drawn next to the illegal side.

\begin{figure}[t]
\centering
\includegraphics[scale=.8]{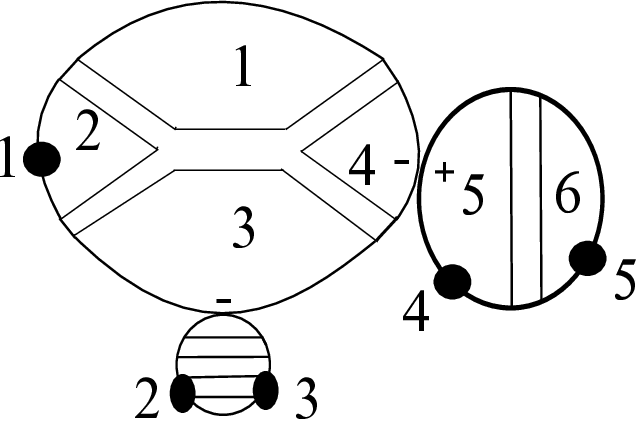}
\caption{A critical nodal ribbon graph.}
\label{nodal}
\end{figure}

In Figure \ref{noncriticalnodal}~a non-critical nodal graph is shown. Here there is some vertex of degree $4,$ the components do not satisfy the parity condition and the ghost component has an illegal node.
\begin{figure}[t]
\centering
\includegraphics[scale=.8]{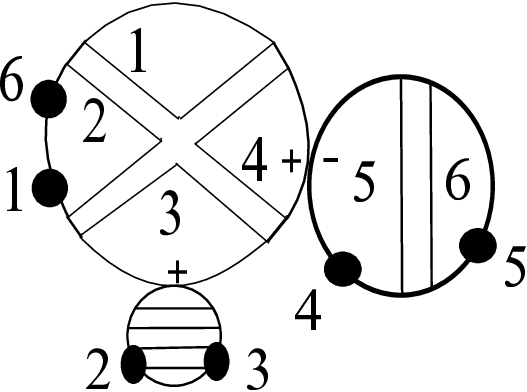}
\caption{A non-critical nodal ribbon graph.}
\label{noncriticalnodal}
\end{figure}

Denote by $G_{g,k,l}$ the set of isomorphism classes of critical nodal ribbon graphs with boundary of genus~$g$, with~$k$ boundary marked points,~$l$ faces and together with a bijective labeling $\alpha:\Faces(G)\stackrel{\sim}{\to} [l],$ %\newRan{induced from the idenity map $\alpha:[l]\to[l].$}
The combinatorial formula in~\cite{RT} is

\begin{thm}
Fix $g,k,l\ge 0$ such that $2g-2+k+2l>0$. Let $\lambda_1,\ldots,\lambda_l$ be formal variables. Then we have
\begin{multline}\label{eq:combinatorial formula}
2^{\frac{g+k-1}{2}}\sum_{a_1,\ldots,a_l\ge 0}\blangle\tau_{a_1} \tau_{a_2} \cdots \tau_{a_l}\sigma^k\brangle_g^o\prod_{i=1}^l\frac{2^{a_i}(2a_i-1)!!}{\lambda_i^{2a_i+1}}=\\
=\sum_{G=\left(\coprod_i G_i\right)/N\in G_{g,k,l}}\frac{\prod_i 2^{v_I(G_i)+g(G_i)+b(G_i)-1}}{|\Aut(G)|}\prod_{e\in\Edges(G)}\lambda(e),
\end{multline}
where
\[
\lambda(e):=
\begin{cases}
\frac{1}{\lambda_i+\lambda_j},&\text{if $e$ is an internal edge between faces $i$ and $j$};\\
\frac{1}{(m+1)}\binom{2m}{m}\lambda_i^{-2m-1},&\text{if $e$ is a boundary edge of face $i$ and $m(e)=m$};\\
1,&\text{if $e$ is a boundary edge of a ghost}.
\end{cases}
\]
\end{thm}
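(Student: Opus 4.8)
The plan is to prove the formula by adapting Kontsevich's strategy for the closed combinatorial formula~\eqref{eq:Kontsevich's formula} to surfaces with boundary, the new geometric input being a cell decomposition of the open moduli space indexed precisely by the critical nodal ribbon graphs in $G_{g,k,l}$. First I would rewrite the left-hand side using~\eqref{openproducts}: the prefactor $2^{\frac{g+k-1}{2}}$ cancels the normalization $2^{-\frac{g+k-1}{2}}$, so the identity to prove becomes
\[
\sum_{a_1,\ldots,a_l\ge 0}\Big(\int_{\oCM_{g,k,l}}e(E,s)\Big)\prod_{i=1}^l\frac{2^{a_i}(2a_i-1)!!}{\lambda_i^{2a_i+1}}=\sum_{G\in G_{g,k,l}}\frac{\prod_i 2^{v_I(G_i)+g(G_i)+b(G_i)-1}}{|\Aut(G)|}\prod_{e\in\Edges(G)}\lambda(e),
\]
with $E=\bigoplus_i\CL_i^{a_i}$. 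I would then build a combinatorial model of $\oCM_{g,k,l}$ decorated by positive perimeters $p_1,\ldots,p_l$ at the internal marked points by doubling each surface with boundary across $\partial\Sigma$ into a closed Riemann surface carrying an anti-holomorphic involution, and invoking existence and uniqueness of a Jenkins--Strebel differential with prescribed perimeters that is compatible with the involution. The real locus of its critical graph is exactly a $(g,k,l)$-ribbon graph with boundary, $\partial\Sigma$ lies inside the critical graph, and the edge lengths give coordinates on each cell; the degenerations at the boundary of this decomposition are governed by the formation of nodes, producing the nodal graphs and the legal/illegal decoration.

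Next I would produce combinatorial representatives of the relevant classes. As in the closed case, on a top-dimensional cell the class $\psi_i$ is represented by the Kontsevich two-form built from the edge-length coordinates around the $i$-th face and normalized by $p_i$. The essential new ingredient is a combinatorial description of the \emph{relative} Euler class $e(E,s)$: the canonical multisections $s_{ij}\in\CS_i$ vanish nowhere on $\partial\oCM_{g,k,l}$ by requirement~\ref{it:canonic1}, so $e(E,s)$ localizes away from the boundary strata, and I would translate these boundary conditions into a prescription along the boundary edges and at the nodes of the graphs. Integration over a fixed cell then reduces, after imposing the dimension constraint $\sum_i a_i=\tfrac{1}{2}(3g-3+k+2l)$ coming from~\eqref{open dimension}, to a sum of iterated integrals in the edge lengths.

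I would then apply the Laplace transform $p_i\mapsto\lambda_i$, integrating each perimeter against $e^{-\lambda_i p_i}$. Because the perimeters are sums of edge lengths and the combinatorial forms are sums over edges, the integral over each cell factorizes into a product over edges. An internal edge between faces $i$ and $j$ contributes $\int_0^\infty e^{-(\lambda_i+\lambda_j)\ell}\,d\ell=\tfrac{1}{\lambda_i+\lambda_j}$, which is the first case of $\lambda(e)$; the double factorials $(2a_i-1)!!$ and the powers $\lambda_i^{-2a_i-1}$ on the left arise from the moments of the Laplace transform applied to the $p_i$-dependence of $\psi_i^{a_i}$, exactly as in Kontsevich's computation. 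The orbifold nature of the cells together with the choice of volume form account for the factor $1/|\Aut(G)|$ and for the powers of two in $\prod_i 2^{v_I(G_i)+g(G_i)+b(G_i)-1}$ and in the $2^{a_i}$ on the left.

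The main obstacle is the analysis at the boundary: I must show that a boundary edge of face $i$ carrying $m=m(e)$ illegal sides of nodes contributes the weight $\tfrac{1}{m+1}\binom{2m}{m}\lambda_i^{-2m-1}$. I expect this to follow from a local generating-function computation: summing over the ways the $m$ illegal node-sides subdivide the boundary edge and performing the corresponding Laplace integrals over the subsegment lengths produces the Catalan number $\tfrac{1}{m+1}\binom{2m}{m}$, while the power $\lambda_i^{-2m-1}$ records one factor $\lambda_i^{-1}$ for the edge itself together with an extra $\lambda_i^{-2}$ for each illegal side; for $m=0$ the weight is $\lambda_i^{-1}$, which, up to the powers of two absorbed into the $2^{(\cdot)}$ bookkeeping, is the specialization $j=i$ of the internal-edge factor $\tfrac{1}{\lambda_i+\lambda_j}$ on the doubled surface. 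The remaining degenerate pieces, the ghost components with $l=0$, $g=0$, $k=3$, carry no faces and hence contribute the trivial weight $1$, and I would check that the criticality conditions (an odd number of special points per boundary component, and illegal sides avoiding ghosts) are precisely those needed for the cells to assemble into $\oCM_{g,k,l}$ without overcounting. Matching all powers of two and the double factorials on the two sides and summing over $G\in G_{g,k,l}$ and over $a_1,\ldots,a_l$ then yields~\eqref{eq:combinatorial formula}.
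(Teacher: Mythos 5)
First, a point of orientation: the paper you are reading does not prove this theorem at all --- it is imported wholesale from \cite{RT} (``The combinatorial formula in~\cite{RT} is \dots''), so there is no internal proof to measure your proposal against. What you have written is a plausible reconstruction of the strategy one would expect \cite{RT} to follow, namely Kontsevich's scheme \cite{Kon92} adapted to the open setting: a Strebel-type cell decomposition of $\oCM_{g,k,l}$ obtained by doubling across the boundary and taking involution-invariant Jenkins--Strebel differentials, combinatorial representatives of the classes, and a Laplace transform in the perimeters producing the edge weights. As an outline of the architecture this is reasonable, and your bookkeeping at the start (cancelling $2^{\frac{g+k-1}{2}}$ against the normalization in \eqref{openproducts}) is correct.

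However, the proposal has genuine gaps precisely at the points where the open case departs from the closed one, and these are the points you defer with ``I would translate'' and ``I expect this to follow.'' (i) The central new object is the \emph{relative} Euler class $e(E,s)$ with respect to the \emph{canonical} multisections; requirement~\ref{it:canonic1} only tells you $s$ is nonvanishing on $\partial\oCM_{g,k,l}$, not what its zero locus looks like in the interior of the cells. The entire content of the theorem --- why nodal graphs appear at all, why the nodes acquire a legal/illegal polarity, why illegal sides avoid ghosts, and why each boundary component must carry an odd number of legal points --- is encoded in the specific inductive construction of the spaces $\CS_i$ in \cite{PST14,JSRTa}, and none of that is derivable from the abstract properties \ref{it:canonic1}--\ref{it:canonic2} alone. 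Your sketch treats the nodal decorations as arising from degenerations of the cell decomposition, but they in fact record where the canonical multisections vanish, which is a different mechanism and requires the actual construction. (ii) The weight $\frac{1}{m+1}\binom{2m}{m}\lambda_i^{-2m-1}$ is asserted to come from ``a local generating-function computation'' producing a Catalan number; this is exactly the step that needs a proof, since the exponent $-2m-1$ and the Catalan coefficient together encode a nontrivial count of zero configurations along a boundary edge, not merely a subdivision of an interval into $m+1$ pieces (the latter would give $\ell^m/m!$ under the Laplace transform, hence $m!\,\lambda_i^{-m-1}/m!$, which has the wrong degree and the wrong coefficient). (iii) The powers of two $\prod_i 2^{v_I(G_i)+g(G_i)+b(G_i)-1}$ and the orientation of the cells of a moduli space that is only an orbifold with corners (and whose orientability already requires extra structure in positive genus) are waved at rather than computed; the exponent involving $g(G_i)$ and $b(G_i)$ has no analogue in the closed formula \eqref{eq:Kontsevich's formula} and cannot be inferred from it. In short: the skeleton is right, but the three or four lemmas that would constitute the actual theorem are exactly the ones left as expectations, so this is a plan for a proof rather than a proof.
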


\subsection{Acknowledgments}

We thank A.~Alexandrov, L.~Chekhov, D.~Kazhdan, O.~G.~Louidor, A.~Okounkov, R.~Pandharipande, P.~Rossi, S.~Shadrin, J.~P.~Solomon and D.~Zvonkine for discussions related to the work presented here.

A. B. was supported by grant ERC-2012-AdG-320368-MCSK in the
group of R.~Pandharipande at ETH Zurich and grant RFFI-16-01-00409.
R.T. was supported by ISF Grant 1747/13 and ERC Starting Grant 337560 in the group of J.~P.~Solomon at the Hebrew university of Jerusalem.

Part of the work was completed during the visit of A.B to the Einstein Institute of Mathematics
of the Hebrew University of Jerusalem in 2014 and during the visits of R.T. to the Forschungsinstitut f\"ur Mathematik at ETH Z\"urich in 2013 and 2014.

%%%%%%%%%%%%%%%%%%%%%%%%%%%%%%%%%%%%%%%%%%%%%%%%%%%%%%%%%%%%%%%%%%%%%%%%%%
%%%%%%%%%%%%%%%%%%%%%%%%%%%%%%%%%%%%%%%%%%%%%%%%%%%%%%%%%%%%%%%%%%%%%%%%%%

\section{Matrix model}

In this section we present a matrix integral that is a starting point in our proof of Theorem~\ref{theorem:main}. Instead of deriving a matrix integral for the open partition function~$\tau^o$ directly from the combinatorial formula~\eqref{eq:combinatorial formula}, we write a matrix model for an
auxiliary function~$f^o_N$ that is a sum over non-nodal ribbon graphs with boundary with some additional structure. We then relate the function~$f^o_N$ to the open partition function~$\tau^o$ by an
action of the exponent of some quadratic differential operator.

The section is organized as follows. In Section~\ref{subsection:reformulation} we give a slight reformulation of the combinatorial formula~\eqref{eq:combinatorial formula}. In
Section~\ref{subsection:sum over non-nodal graphs} we introduce an auxiliary function~$f^o_N$ and relate it to the open partition function~$\tau^o$. This relation is given by
Lemma~\ref{lemma:relation}. Section~\ref{subsection:brief recall} contains a brief review of basic facts about the integration over the space of Hermitian matrices. In
Section~\ref{subsection:matrix integral} we give a matrix integral for the function~$f^o_N$. This is the subject of Proposition~\ref{proposition:matrix integral}. The matrix integral in this proposition is understood in the sense of formal matrix integration. In Section~\ref{subsection:convergent matrix integral} we discuss how to make sense of it as a convergent integral.

We fix an integer $N\ge 1$ throughout this section and we set the genus parameter~$u$ to be equal to~$1.$

\subsection{Reformulation of the combinatorial formula}\label{subsection:reformulation}

Here we reformulate the combinatorial formula~\eqref{eq:combinatorial formula}. This step is completely analogous to what M.~Kontsevich
did in~\cite{Kon92} (see the proof of Theorem~1.1 there).

Denote by $G_N^{cr}$ the set of isomorphism classes of critical nodal ribbon graphs with boundary together with a labeling $\alpha\colon\Faces(G)\to [N]$ (a coloring of faces in $N$ colors). For a graph $G\in G_N^{cr}$ and an edge $e\in\Edges(G)$, let
$$
\tlambda(e):=
\begin{cases}
-\frac{1}{\lambda_{\alpha(f_1)}+\lambda_{\alpha(f_2)}},&\text{if $e$ is an internal edge between faces $f_1$ and $f_2$};\\
-\frac{1}{(m+1)}\binom{2m}{m}\lambda_{\alpha(f)}^{-2m-1},&\text{if $e$ is a boundary edge of face $f$ and $m(e)=m$};\\
1,&\text{if $e$ a boundary edge of a ghost}.
\end{cases}
$$
Introduce formal variables $\lambda_1,\ldots,\lambda_N$ and consider the diagonal~$N\times N$ matrix
$$
\Lambda:=\diag(\lambda_1,\ldots,\lambda_N).
$$
From the combinatorial formula~\eqref{eq:combinatorial formula} it follows that
\begin{gather}\label{eq:reformulation}
\left.F^o\right|_{t_i=-(2i-1)!!\tr\Lambda^{-2i-1}}=\sum_{G=\left(\coprod_i G_i\right)/N\in
G^{cr}_N}\frac{c(G)}{|\Aut(G)|}\left(\prod_{e\in\Edges(G)}\tlambda(e)\right)\frac{s^{|V_{BM}(G)|}}{|V_{BM}(G)|!},
\end{gather}
where $c(G):=\prod_i c(G_i)$ and
$$
c(G_i):=
\begin{cases}
\half,&\text{if $G_i$ is a ghost};\\
2^{-e_I(G_i)+2v_I(G_i)-|V_{BM}(G_i)|+b(G_i)},&\text{otherwise}.
\end{cases}
$$
Here $e_I(G_i)$ stands for the number of the internal edges of the graph~$G_i$.

\subsection{Sum over non-nodal graphs}\label{subsection:sum over non-nodal graphs}

Here we introduce an auxiliary function~$f^o_N$ and relate it to the open partition function~$\tau^o$.

We denote by $\mcR_N$ the ring of formal series of the form
$$
f(\lambda_1,\ldots,\lambda_N)=\sum_{m\ge 0}f_m(\lambda_1,\ldots,\lambda_N),
$$
where $f_m$ is a rational function in $\lambda_1,\ldots,\lambda_N$ homogeneous of degree $-m$. We denote by $\mcR_N^{-d}\subset\mcR_N$ the subspace that consists of series of the form $f=\sum_{m\ge d}f_m$, where $f_m$ is a rational function in~$\lambda_1,\ldots,\lambda_N$ homogeneous of degree $-m$. Note that the ring $\mbC[[\lambda_1^{-1},\ldots,\lambda_N^{-1}]]$ can
be naturally considered as a subring of $\mcR_N$.

Let us introduce the following auxiliary set of graphs. Denote by $G_N^{cr,nn}$ the set of isomorphism classes of critical non-nodal ribbon graphs with boundary, that are not ghosts, together with
\begin{itemize}
\item a labeling $\alpha\colon\Faces(G)\to[N]$,
\item a map $\beta\colon V_{BM}(G)\to\{\pm 1\}$;
\end{itemize}
such that on each boundary component of~$G$ the number of the boundary marked points~$v\in V_{BM}(G)$ with $\beta(v)=1$ is odd. Vertices $v\in V_{BM}(G)$ with $\beta(v)=1$ will be called legal and vertices $v\in V_{BM}(G)$ with $\beta(v)=-1$ will be called illegal. The boundary edges of~$G$ are, by definition, the boundary segments between successive vertices of~$G$ which are not the illegal boundary marked points. We will use the following notations:
\begin{align*}
&v_{BM+}(G):=|\{v\in V_{BM}(G)|\beta(v)=1\}|,\\
&v_{BM-}(G):=|\{v\in V_{BM}(G)|\beta(v)=-1\}|.
\end{align*}

Let
$$
\tau^c_N(\Lambda):=\left.\tau^c\right|_{t_i=-(2i-1)!!\tr\Lambda^{-2i-1}}.
$$
Introduce an auxiliary formal variable $s_-$ and define the following series of rational functions:
\begin{gather}\label{eq:definition of f}
f^o_N(\Lambda,s,s_-):=\tau^c_N\exp\left(\sum_{G\in G^{cr,nn}_N}\frac{\tc(G)}{|\Aut(G)|}\left(\prod_{e\in\Edges(G)}\tlambda(e)\right)\frac{s^{v_{BM+}(G)}}{v_{BM+}(G)!}\frac{s_-^{v_{BM-}(G)}}{v_{BM-}(G)!}\right),
\end{gather}
where
\begin{gather}\label{eq:new coefficient}
\tc(G):=2^{-e_I(G)+2v_I(G)-v_{BM+}(G)+b(G)}.
\end{gather}
By definition, $f^o_N$ is an element of $\mcR_N[[s,s_-]]$.

\begin{rmk}
We do not know  if $f^o_N$ belongs to $\mbC[[\lambda_1^{-1},\ldots,\lambda_N^{-1}]][[s,s_-]]$ or not. Several computations in low degrees motivate us to conjecture that
$f^o_N\in\mbC[[\lambda_1^{-1},\ldots,\lambda_N^{-1}]][[s,s_-]]$. Moreover, we conjecture that there exists a power series $f^o(t_0,t_1,\ldots,s,s_-)\in\mbC[[t_0,t_1,\ldots]][[s,s_-]]$
such that for any $N\ge 1$ we have
$$
\left.f^o\right|_{t_i=-(2i-1)!!\tr\Lambda^{-2i-1}}=f^o_N.
$$
However, we do not need this statement in the paper.
\end{rmk}

Let
$$
\tau^o_N(\Lambda,s):=\left.\tau^o\right|_{t_i=-(2i-1)!!\tr\Lambda^{-2i-1}}.
$$

\begin{lemma}\label{lemma:relation}
We have
\begin{gather}\label{eq:relation}
\tau^o_N=\left.e^{\half\frac{\d^2}{\d s\d s_-}}\left(e^{\frac{s^3}{6}}f^o_N\right)\right|_{s_-=0}.
\end{gather}
\end{lemma}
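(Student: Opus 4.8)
The plan is to prove the identity~\eqref{eq:relation} by comparing the two sides as combinatorial generating functions over nodal ribbon graphs. The left side, $\tau^o_N$, is $\exp(F^c_N + F^o_N)$ restricted to the matrix substitution $t_i = -(2i-1)!!\tr\Lambda^{-2i-1}$, and by the reformulation~\eqref{eq:reformulation} its logarithm $F^o_N$ is a sum over \emph{critical nodal} ribbon graphs $G \in G^{cr}_N$, with the variable $s$ tracking the total number of boundary marked points $|V_{BM}(G)|$. The right side is built from $f^o_N$, whose logarithm (after dividing by $\tau^c_N$) is a sum over \emph{non-nodal} graphs $G \in G^{cr,nn}_N$ equipped with an extra labeling $\beta$ into legal/illegal marked points, with $s$ tracking $v_{BM+}$ and $s_-$ tracking $v_{BM-}$. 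The whole game is therefore to explain how the operator $e^{\frac12 \partial^2/\partial s\, \partial s_-}$ followed by setting $s_-=0$, together with the factor $e^{s^3/6}$, reassembles the non-nodal graphs (with $\pm$ labels) into genuine nodal graphs.

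\textbf{The combinatorial dictionary.} First I would observe that a critical nodal graph is, by definition, a disjoint union of critical components glued along a pairing $N$ of legal-to-illegal boundary marked points, with the whole configuration connected. Dropping the pairing $N$ and forgetting which illegal sides were glued to which legal sides, one recovers a disjoint union of (possibly ghost) critical non-nodal graphs in which each glued vertex carries a $\beta$-value: legal sides get $\beta=1$, illegal sides get $\beta=2$. Thus the plan is to set up a bijection between nodal graphs and pairs (multiset of non-nodal $\beta$-labeled components, perfect matching of $\beta=2$ vertices to a subset of $\beta=1$ vertices). The factor $e^{\frac12 \partial^2/\partial s\, \partial s_-}$ is exactly the operator that implements such a matching: expanding it as $\sum_m \frac{1}{2^m m!}\frac{\partial^{2m}}{\partial s^m\, \partial s_-^m}$, each application pairs one $s$ (a legal marked point) with one $s_-$ (an illegal marked point), the $1/(2^m m!)$ accounting for the number of ways to form $m$ unordered pairs, and setting $s_-=0$ afterward forces \emph{every} illegal marked point to be consumed by a node, exactly as the criticality condition that ghost and non-ghost components absorb all illegal sides demands.

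\textbf{Reconciling the weights and the ghosts.} The main technical bookkeeping is matching the coefficients $c(G_i)$ of~\eqref{eq:reformulation} with $\tc(G)$ of~\eqref{eq:new coefficient}, tracking how the edge-weights $\tlambda(e)$ and the powers of $2$ transform under gluing. When a legal and an illegal marked point are identified into a node, the count $m(e)$ of illegal sides on a boundary edge changes, so the boundary edge weights $\tlambda(e)$ on the non-nodal side must reorganize into the nodal edge weights; I would verify that the product $\prod_e \tlambda(e)$ is preserved under the gluing once illegal vertices are reinterpreted as interior points of boundary edges. The exponents in $\tc(G) = 2^{-e_I + 2v_I - v_{BM+} + b}$ versus $c(G_i)$ differ by the $-v_{BM+}$ term instead of $-|V_{BM}|$; the discrepancy $2^{-v_{BM-}}$ per component is precisely the $2^{-m}$ produced by the operator $e^{\frac12 \partial^2/\partial s\, \partial s_-}$ on each of the $m$ nodes. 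The separate treatment of ghosts is the delicate point: a ghost contributes $c(G_i)=\frac12$ and has no illegal sides, so ghosts must be produced on the right-hand side not by $f^o_N$ (which excludes ghosts) but by the factor $e^{s^3/6}$. Indeed $\frac{s^3}{6}$ is the generating function of a single critical ghost component — a genus-$0$, three-boundary-marked-point graph with $\Aut$ of order $6$ and weight $\frac12$, whose legal marked points are then free to be glued to illegal sides of other components via the same pairing operator.

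\textbf{Expected main obstacle.} I expect the hardest step to be the precise verification that the power-of-$2$ bookkeeping closes up — that is, checking that the factor $2^{-m}$ from the differential operator, the change from $-|V_{BM}|$ to $-v_{BM+}$ in the exponent of $\tc$, and the $\frac12$ per ghost all combine to reproduce exactly $c(G)=\prod_i c(G_i)$ with no leftover powers of $2$, uniformly across all nodal topologies including those with ghost components carrying nodes. A secondary subtlety is the automorphism bookkeeping: one must confirm that summing over non-nodal labeled components and then over matchings, divided by the symmetry factors $|\Aut|$ and the $m!$ from the operator, reproduces $1/|\Aut(G)|$ for the assembled nodal graph, using the standard orbit-counting (exponential formula) argument that relates $\exp$ of a connected sum to the sum over all, possibly disconnected, configurations. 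Once these weight and symmetry identities are in hand, the equality of the two generating functions is formal.
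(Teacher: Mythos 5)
Your proposal is correct and follows essentially the same route as the paper: the authors' proof is precisely the one-line observation that the lemma follows from~\eqref{eq:reformulation}, the definition~\eqref{eq:definition of f}, and the standard graphical (Wick-type) interpretation of $e^{\half\frac{\d^2}{\d s\d s_-}}$ as pairing each illegal ($s_-$) marked point with a legal ($s$) marked point to form a node, with $s_-=0$ forcing all illegal points to be consumed and $e^{s^3/6}$ supplying the ghost components. Your elaboration of the weight bookkeeping (the $2^{-m}$ from the operator matching the discrepancy between $c(G_i)$ and $\tc(G)$, and the exponential-formula treatment of automorphisms) is exactly the content the paper leaves implicit.
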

\begin{proof}
The lemma easily follows from equation~\eqref{eq:reformulation}, the definition~\eqref{eq:definition of f} of~$f^o_N$ and the standard result about the graphical representation of the action of the exponent of a quadratic differential operator. Note that the factor $e^{\frac{s^3}{6}}$ corresponds to ghost components in critical nodal ribbon graphs.
\end{proof}

\subsection{Brief recall of the matrix integration}\label{subsection:brief recall}

We recommend the book~\cite[Sections~3,~4]{LZ04} as a good introduction to this subject.

\subsubsection{Gaussian measure on the space of Hermitian matrices}

Denote by $\mcH_N$ the space of Hermitian~$N\times N$ matrices. For $H=(h_{i,j})\in\mcH_N$ let $x_{i,i}:=h_{i,i}$ and $x_{i,j}:=\Re(h_{i,j}), y_{i,j}:=\Im(h_{i,j}), i<j$. Introduce a
volume form~$dH$ by
$$
d H:=\prod_{i=1}^N d x_{i,i}\prod_{1\le i<j\le N}d x_{i,j}d y_{i,j}.
$$
Consider positive real numbers $\lambda_1,\lambda_2,\ldots,\lambda_N\in\mbR_{>0}$. Recall that $\Lambda:=\diag(\lambda_1,\ldots,\lambda_N)$. Introduce a Gaussian measure
$d\mu_{\Lambda,N}(H)$ on the space~$\mcH_N$ by
\begin{gather*}
d\mu_{\Lambda,N}(H):=c_{\Lambda,N}e^{-\half\tr H^2\Lambda}dH,
\end{gather*}
where the normalization
$$
c_{\Lambda,N}:=(2\pi)^{-N^2/2}\prod_{i=1}^N\lambda_i^{1/2}\prod_{1\le i<j\le N}(\lambda_i+\lambda_j)
$$
is determined by the constraint
$$
\int_{\mcH_N}d\mu_{\Lambda,N}(H)=1.
$$

\subsubsection{Wick formula}

For any polynomial $f(H)\in\mbC[h_{i,j}]_{1\le i,j\le N}$ let
$$
\<f(H)\>_{\Lambda,N}:=\int_{\mcH_N}f(H)d\mu_{\Lambda,N}(H).
$$
The integrals $\<f\>_{\Lambda,N}$ are described by the following result (see e.g.~\cite[Section 3.2.3]{LZ04}).

\begin{lemma}[Wick formula]\label{lemma:Wick formula}
{\it 1}. If $f(H)$ is homogeneous of odd degree, then $\<f\>_{\Lambda,N}=0$.\\
{\it 2}. For any indices $1\le i_1,\ldots,i_{2k},j_1,\ldots,j_{2k}\le N$, we have
$$
\<h_{i_1,j_1}h_{i_2,j_2}\ldots
h_{i_{2k},j_{2k}}\>_{\Lambda,N}=\sum\<h_{i_{p_1},j_{p_1}}h_{i_{q_1},j_{q_1}}\>_{\Lambda,N}\<h_{i_{p_2},j_{p_2}}h_{i_{q_2},j_{q_2}}\>_{\Lambda,N}\ldots\<h_{i_{p_k},j_{p_k}}h_{i_{q_k},j_{q_k}}\>_{\Lambda,N},
$$
where the sum is taken over all permutations $p_1q_1p_2q_2\ldots p_kq_k$ of the set of indices~$1,2,\ldots,2k$ such that $p_1<p_2<\ldots<p_k$ and $p_i<q_i$.\\
{\it 3}. We have
\begin{gather*}
\<h_{i,j}h_{k,l}\>_{\Lambda,N}=
\begin{cases}
\frac{2}{\lambda_i+\lambda_j},&\text{if $j=k$ and $i=l$};\\
0,&\text{otherwise}.
\end{cases}
\end{gather*}
\end{lemma}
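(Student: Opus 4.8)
The plan is to reduce the Gaussian integral over $\mcH_N$ to a product of independent one-dimensional Gaussian integrals, and then to invoke the classical Wick (Isserlis) theorem. First I would rewrite the quadratic form in the real coordinates $x_{i,i},x_{i,j},y_{i,j}$. Using $h_{j,i}=\overline{h_{i,j}}$ one computes
\[
\half\tr H^2\Lambda=\half\sum_{i=1}^N\lambda_i x_{i,i}^2+\sum_{1\le i<j\le N}\frac{\lambda_i+\lambda_j}{2}\left(x_{i,j}^2+y_{i,j}^2\right),
\]
so that, after this change of variables, $d\mu_{\Lambda,N}$ is a product over the $N^2$ independent real coordinates of centered one-dimensional Gaussians. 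In this normalization $x_{i,i}$ has variance $\lambda_i^{-1}$, while each of $x_{i,j}$ and $y_{i,j}$ (for $i<j$) has variance $(\lambda_i+\lambda_j)^{-1}$, and the product of the corresponding normalizing constants reproduces exactly the stated value of $c_{\Lambda,N}$, confirming that the measure has total mass $1$.

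Statement~1 is then immediate from the symmetry $H\mapsto -H$: this map preserves $d\mu_{\Lambda,N}$, since the weight depends only on $\tr H^2\Lambda$, whereas a homogeneous $f$ of odd degree satisfies $f(-H)=-f(H)$; hence $\<f\>_{\Lambda,N}=-\<f\>_{\Lambda,N}=0$. Statement~3 follows by direct computation from the independence just established. Writing each entry in terms of the real coordinates, $\<h_{i,j}h_{k,l}\>_{\Lambda,N}$ is nonzero only when $\{k,l\}=\{i,j\}$, because distinct real coordinates are independent and centered. Expanding $h_{i,j}=x_{i,j}+\sqrt{-1}\,y_{i,j}$ for $i<j$ shows that the ``diagonal'' pairing $(k,l)=(i,j)$ contributes $\<x_{i,j}^2\>_{\Lambda,N}-\<y_{i,j}^2\>_{\Lambda,N}=0$ (the cross term vanishing by independence), while the conjugate pairing $(k,l)=(j,i)$ contributes $\<x_{i,j}^2\>_{\Lambda,N}+\<y_{i,j}^2\>_{\Lambda,N}=\frac{2}{\lambda_i+\lambda_j}$; the diagonal entries and the case $i>j$ are handled identically, giving precisely the asserted formula.

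For Statement~2, I would invoke the classical Wick theorem for a centered jointly Gaussian family: the $2k$-th moment equals the sum over all perfect matchings of the products of the pairwise covariances. Since each $h_{i,j}$ is a complex-linear combination of the jointly Gaussian real coordinates, and both sides of the asserted identity are multilinear in the entries (on the right-hand side each of the $2k$ indices lies in exactly one matching block), the formula for the entries $h_{i_a,j_a}$ follows from the formula for the real coordinates by linearity. The combinatorial constraints $p_1<\dots<p_k$ and $p_i<q_i$ simply select one representative per matching of $\{1,\dots,2k\}$.

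The only genuine content is the Gaussian factorization in the first paragraph---identifying the correct variances and checking that they assemble into $c_{\Lambda,N}$---together with the elementary two-point computation; Statement~2 is then a black-box application of the Isserlis theorem extended by multilinearity to complex combinations. I expect the main (though still routine) obstacle to be the careful bookkeeping of the complex entries versus the real coordinates in the two-point function, in particular the sign cancellation that kills the diagonal pairing $(k,l)=(i,j)$ and singles out the conjugate pairing $(k,l)=(j,i)$.
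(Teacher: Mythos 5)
Your proof is correct. Note, however, that the paper does not prove this lemma at all: it is quoted as a standard fact with a pointer to Lando--Zvonkin \cite[Section 3.2.3]{LZ04}, so there is no in-paper argument to compare against. Your write-up is exactly the standard derivation found in that reference --- diagonalize the quadratic form $\half\tr H^2\Lambda$ in the real coordinates to get $N^2$ independent one-dimensional Gaussians (with the variances and the normalization $c_{\Lambda,N}$ checked correctly), deduce the two-point function including the sign cancellation that kills the pairing $\<h_{i,j}h_{i,j}\>$ and leaves only $\<h_{i,j}h_{j,i}\>=\tfrac{2}{\lambda_i+\lambda_j}$, and obtain the general statement from the Isserlis theorem by multilinearity --- so it fills in the omitted details rather than taking a different route.
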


%%%%%%%%%%%%%%%%%%%%%%%%%%%%%%%%%%%%%%%%%%%%%%%%%%%%%%%%%

\subsection{Matrix integral for $f^o_N$}\label{subsection:matrix integral}

In this section we present a matrix integral representation for the function $f^o_N$.

Let
\begin{align*}
&G(\Lambda,s_-):=\sum_{m\ge 0}\frac{1}{m+1}{2m\choose m}s_-^m\Lambda^{-2m-1},\\
&I_B(H,\Lambda,s,s_-):=\tr\left[\sum_{m\ge 1}\frac{1}{m}\left(\left(\frac{iH-s}{2}G(\Lambda,s_-)\right)^m-\left(\frac{iH+s}{2}G(\Lambda,s_-)\right)^m\right)\right].
\end{align*}
Note that
$$
G(\Lambda,s_-)=\frac{2}{\Lambda+\sqrt{\Lambda^2-4s_-}}.
$$
Therefore, the exponent~$e^{I_B}$ can be represented in the following way
$$
e^{I_B}=\det\left(\frac{\Lambda+\sqrt{\Lambda^2-4s_-}-iH-s}{\Lambda+\sqrt{\Lambda^2-4s_-}-iH+s}\right).
$$
\begin{prop}\label{proposition:matrix integral}
We have
\begin{gather}\label{eq:matrix integral for f^o}
f^o_N(\Lambda,s,s_-)=c_{\Lambda,N}\int^{form}_{\mcH_N} e^{\frac{i}{6}\tr H^3+I_B-\frac{1}{2}\tr H^2\Lambda} dH.
\end{gather}
\end{prop}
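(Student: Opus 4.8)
The plan is to read the right-hand side of \eqref{eq:matrix integral for f^o} as a Gaussian expectation and expand it into fatgraph (ribbon-diagram) Feynman series. Since $d\mu_{\Lambda,N}=c_{\Lambda,N}e^{-\half\tr H^2\Lambda}\,dH$, the formal integral is exactly $\<e^{\frac{i}{6}\tr H^3+I_B}\>_{\Lambda,N}$, and ``formal'' means we expand $e^{\frac{i}{6}\tr H^3+I_B}$ as a power series in $s,s_-$ and in the entries $h_{i,j}$, then integrate term by term via the Wick formula (Lemma~\ref{lemma:Wick formula}); convergence is irrelevant here. By the linked-cluster theorem, a Gaussian expectation of an exponential equals the exponential of the sum of connected diagrams, so $\<e^{\frac{i}{6}\tr H^3+I_B}\>_{\Lambda,N}=\exp\!\big(\sum_{\Gamma\ \mathrm{connected}}w(\Gamma)\big)$. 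The connected diagrams with no $I_B$-insertion are precisely the closed trivalent ribbon graphs, and their exponential is $\<e^{\frac{i}{6}\tr H^3}\>_{\Lambda,N}=\tau^c_N$ by Kontsevich's theorem in the present normalization. Hence it suffices to show that the connected diagrams carrying at least one $I_B$-insertion reproduce, term by term, the graph sum in the exponent of \eqref{eq:definition of f}; note each such diagram has at least one face, so it is automatically non-ghost, matching the defining restriction of $G^{cr,nn}_N$.

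Next I would unpack $I_B$ to read off the boundary Feynman rules. Keeping $I_B=\sum_{m\ge1}\frac1m\tr\big[(\frac{iH-s}{2}G)^m-(\frac{iH+s}{2}G)^m\big]$, each trace $\frac1m\tr[\cdots]$ produces one boundary component, a cyclic chain of $m$ factors, with the prefactor $\frac1m$ supplying the cyclic symmetry of that boundary. Each factor $\frac{iH\mp s}{2}G$ splits into a connection part $\frac{iH\mp s}{2}$ and a diagonal edge part $G$. Because $G=\sum_{m'\ge0}\frac{1}{m'+1}\binom{2m'}{m'}s_-^{m'}\Lambda^{-2m'-1}$ is exactly minus the boundary-edge weight $\tlambda$ with $m(e)=m'$ illegal points, each $G$ is one boundary edge carrying the correct $s_-$-weighted Catalan generating function, and the total $s_-$-degree of a diagram is $\sum_e m(e)=v_{BM-}$. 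The connection parts are of two kinds: an $s$-part $\frac{\mp s}{2}$ is a legal boundary marked point, while an $H$-part $\frac{i}{2}h$ provides a half-edge to be Wick-contracted, i.e.\ a trivalent vertex sitting on the boundary. The key structural point is the difference of the two exponentials: a word with $j$ legal marked points enters the two terms with relative sign $(-1)^j$, so it cancels when $j$ is even and doubles when $j$ is odd. This builds into the integrand exactly the defining constraint of $G^{cr,nn}_N$, that every boundary component carries an odd number of legal marked points.

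For the remaining edges, Wick's formula pairs each boundary $H$-half-edge either with another boundary connection or with one of the three half-edges of a cubic vertex $\frac{i}{6}\tr H^3$; each contraction $\<h_{i,j}h_{k,l}\>_{\Lambda,N}=\frac{2}{\lambda_i+\lambda_j}\delta_{jk}\delta_{il}=-2\tlambda(e)$ is one internal edge, and the index pattern $\delta_{jk}\delta_{il}$ is precisely the ribbon gluing assigning a face label $\alpha\in[N]$ to each face. It then remains to collect all numerical prefactors -- the $\frac{i}{6}$ per cubic vertex, the $\frac{i}{2}$ per boundary $H$-connection, the shared $\frac12$ per boundary factor, the $-2$ per boundary component from the odd-parity doubling, and the $-2$ per internal propagator -- together with the symmetry factors, and to verify they collapse to $\tfrac{\tc(G)}{|\Aut(G)|}\prod_{e}\tlambda(e)\,\tfrac{s^{v_{BM+}}}{v_{BM+}!}\tfrac{s_-^{v_{BM-}}}{v_{BM-}!}$ with $\tc(G)=2^{-e_I(G)+2v_I(G)-v_{BM+}(G)+b(G)}$, the powers of two from the internal part following the same count as Kontsevich's closed computation and all factors of $i$ cancelling to leave a real expression.

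The genuine work, and the main obstacle, is exactly this final bookkeeping: tracking the powers of $2$, the powers of $i$, and the signs (the $(-1)^{e_I}$ from propagators, the sign of each $G$, and the per-boundary $-2$ must conspire to give the positive $\tc(G)$ and the correct global sign of $\prod_e\tlambda(e)$), and -- most delicately -- showing that passing from labeled Wick pairings and cyclic words in the traces to isomorphism classes in $G^{cr,nn}_N$ yields exactly the automorphism weight $\frac{1}{|\Aut(G)|}$ together with the symmetric normalizations $\frac{1}{v_{BM+}!}$ and $\frac{1}{v_{BM-}!}$, and that the Catalan numbers in $G$ correctly enumerate the placement of the $v_{BM-}$ illegal points along the boundary edges. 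These are standard but intricate fatgraph-counting verifications; once they are carried out, the asserted identity \eqref{eq:matrix integral for f^o} follows.
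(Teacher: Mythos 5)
Your proposal is correct and follows essentially the same route as the paper: a Wick/Feynman-diagram expansion in which the traces of $I_B$ are boundary components, the factor $G(\Lambda,s_-)$ is the Catalan-weighted boundary-edge weight, the difference of the $\pm s$ terms enforces the odd number of legal marked points per boundary, and the closed part reproduces $\tau^c_N$ via Kontsevich's integral. The only differences are presentational — you expand the integral into diagrams while the paper assembles the integrand from graph pieces, and you defer the final $2$-, $i$- and sign-bookkeeping that the paper carries out via the identities $3v_I+v_{B3}=2e_I$ and $(-1)^{|\Edges(G)|}=i^{v_I+v_{B3}}(-1)^{v_{BM+}}$.
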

\noindent Here the integral is understood in the sense of formal matrix integration. It means the following. We express $e^{\frac{i}{6}\tr H^3+I_B}$ as a series of the form
$$
e^{\frac{i}{6}\tr H^3+I_B}=\sum_{a,b,m\ge 0}s^a s_-^bP_{a,b,m},
$$
where $P_{a,b,m}$ is a polynomial of degree $m$ in $\tr H^3$ and expressions of the form
$$
\tr(H\Lambda^{-d_1}H\Lambda^{-d_2}\ldots H\Lambda^{-d_r}),\quad\text{where $d_1,\ldots,d_r\ge 1$}.
$$
Here the degree is introduced by putting
$$
\deg(\tr H^3):=3,\qquad\deg(\tr(H\Lambda^{-d_1}H\Lambda^{-d_2}\ldots H\Lambda^{-d_r})):=r+2\sum_{i=1}^r d_i.
$$
From Lemma~\ref{lemma:Wick formula} it follows that
$$
c_{\Lambda,N}\int_{\mcH_N}P_{a,b,m}e^{-\frac{1}{2}\tr H^2\Lambda}dH=\<P_{a,b,m}\>_{\Lambda,N}
$$
is zero, if $m$ is odd, and is a rational function in $\lambda_1,\ldots,\lambda_N$ of degree $-\frac{m}{2}$, if $m$ is even. Then the integral on the right-hand side
of~\eqref{eq:matrix integral for f^o} is defined by
$$
c_{\Lambda,N}\int^{form}_{\mcH_N} e^{\frac{i}{6}\tr H^3-\frac{1}{2}\tr H^2\Lambda+I_B}dH:=\sum_{a,b,m\ge 0}s^a s_-^b\<P_{a,b,2m}\>_{\Lambda,N}.
$$
\begin{proof}[Proof of Proposition~\ref{proposition:matrix integral}]
We begin by recalling the derivation of the matrix integral for $\tau^c_N$ from Kontsevich's combinatorial formula~\eqref{eq:Kontsevich's formula}~(\cite{Kon92}). Consider an arbitrary trivalent ribbon graph without boundary with a coloring of faces in~$N$ colors. It can be obtained by gluing trivalent stars (see Fig.~\ref{fig:internal vertex}) in such a way that corresponding indices on glued edges coincide.
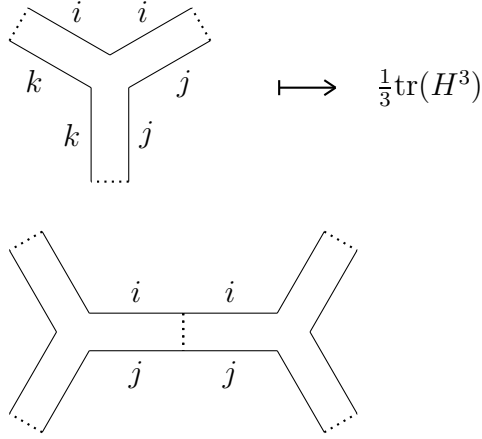
\begin{figure}[t]

\begin{tikzpicture}[scale=0.5]

\begin{scope}[shift={(0.55,6.5)}]
\draw (0.5,0) -- (0.5,-2.5);
\draw (-0.5,0) -- (-0.5,-2.5);
\draw (0.5,0) -- (2.67,1.25);
\draw (0,0.87) -- (2.17,2.12);
\draw (-0.5,0) -- (-2.67,1.25);
\draw (0,0.87) -- (-2.17,2.12);

\draw [thick,dotted] (0.5,-2.5) -- (-0.5,-2.5);
\draw [thick,dotted] (2.67,1.25) -- (2.17,2.12);
\draw [thick,dotted] (-2.67,1.25) -- (-2.17,2.12);

\draw [thick] (4.5,0) -- (6,0);
\draw [thick] (4.5,0.2) -- (4.5,-0.2);
\draw [thick] (6,0) -- (5.7,0.2);
\draw [thick] (6,0) -- (5.7,-0.2);

\coordinate [label=0: $j$] (B) at (0.5,-1.25);
\coordinate [label=180: $k$] (B) at (-0.5,-1.25);
\coordinate [label=-45: $j$] (B) at (1.5,0.75);
\coordinate [label=135: $i$] (B) at (1.30,1.45);
\coordinate [label=-135: $k$] (B) at (-1.5,0.75);
\coordinate [label=45: $i$] (B) at (-1.3,1.45);
\coordinate [label=center: $\frac{1}{3}\mathrm{tr}(H^3)$] (B) at (8.5,0);
\end{scope}

\draw (0,0.5) -- (2.5,0.5);
\draw (0,-0.5) -- (2.5,-0.5);
\draw (0,0.5) -- (-1.25,2.67);
\draw (-0.87,0) -- (-2.12,2.17);
\draw (0,-0.5) -- (-1.25,-2.67);
\draw (-0.87,0) -- (-2.12,-2.17);

\draw [thick,dotted] (2.5,0.5) -- (2.5,-0.5);
\draw [thick,dotted] (-1.25,2.67) -- (-2.12,2.17);
\draw [thick,dotted] (-1.25,-2.67) -- (-2.12,-2.17);

\coordinate [label=90: $i$] (B) at (1.25,0.5);
\coordinate [label=-90: $j$] (B) at (1.25,-0.5);

\begin{scope}[shift={(5,0)}]
\draw (0,0.5) -- (-2.5,0.5);
\draw (0,-0.5) -- (-2.5,-0.5);
\draw (0,0.5) -- (1.25,2.67);
\draw (0.87,0) -- (2.12,2.17);
\draw (0,-0.5) -- (1.25,-2.67);
\draw (0.87,0) -- (2.12,-2.17);

\draw [thick,dotted] (-2.5,0.5) -- (-2.5,-0.5);
\draw [thick,dotted] (1.25,2.67) -- (2.12,2.17);
\draw [thick,dotted] (1.25,-2.67) -- (2.12,-2.17);

\coordinate [label=90: $i$] (B) at (-1.25,0.5);
\coordinate [label=-90: $j$] (B) at (-1.25,-0.5);
\end{scope}

\end{tikzpicture}
\caption{Internal vertex}
\label{fig:internal vertex}
\end{figure}
To a trivalent star we associate the polynomial~$\frac{1}{3}\tr H^3\in\mbC[h_{i,j}]$. Then the Wick formula (Lemma~\ref{lemma:Wick formula}) and Kontsevich's formula~\eqref{eq:Kontsevich's formula} imply that
$$
\tau^c_N=c_{\Lambda,N}\int^{form}_{\mcH_N}e^{\frac{i}{6}\tr H^3-\frac{1}{2}\tr H^2\Lambda}dH.
$$
This is the famous Kontsevich integral (\cite{Kon92}). We recommend the reader the book~\cite[Sections~3 and~4]{LZ04} for a more detailed explanation of this technique.

Consider now a non-nodal critical ribbon graph with boundary~$G\in G^{cr,nn}_N$. Each boundary component is a circle with a configuration of vertices of degrees $2$ and $3$. Vertices of degree~$2$ are boundary marked points and they are of two types: legal and illegal. In Fig.~\ref{pic2} we draw an example of a boundary component. Legal boundary marked points are marked by~$+$ and illegal ones by~$-$. Each graph from~$G_N^{cr,nn}$ can be obtained by gluing trivalent stars and some number of boundary components.

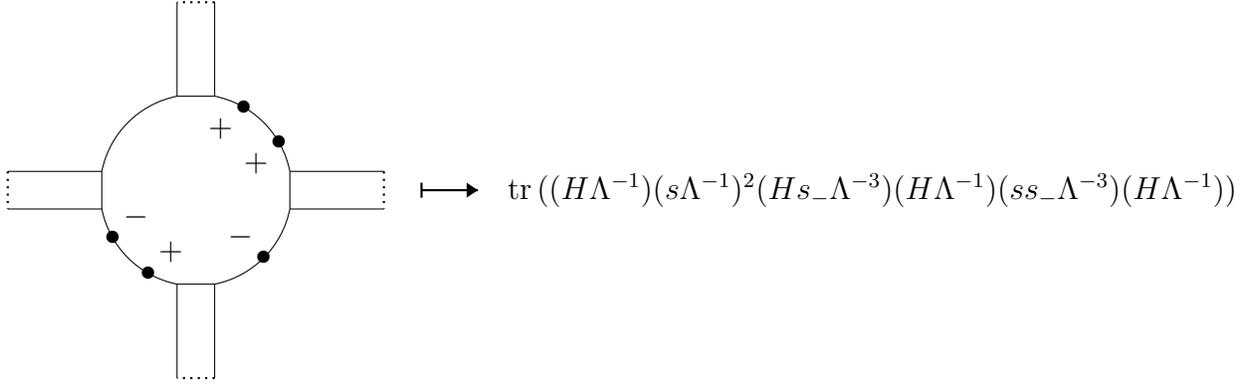
\begin{figure}[t]

\begin{tikzpicture}[scale=0.5]

\draw (2.5,0.5) arc (11.31:78.69:2.55);
\draw (2.5,0.5) -- (5,0.5);
\draw (2.5,-0.5) arc (-11.31:-78.69:2.55);
\draw (2.5,-0.5) -- (5,-0.5);
\draw (2.5,-0.5) -- (2.5,0.5);

\draw (-2.5,0.5) arc (168.69:101.31:2.55);
\draw (-2.5,0.5) -- (-5,0.5);
\draw (-2.5,-0.5) arc (-168.69:-101.31:2.55);
\draw (-2.5,-0.5) -- (-5,-0.5);
\draw (-2.5,-0.5) -- (-2.5,0.5);

\draw (0.5,2.5) -- (0.5,5);
\draw (-0.5,2.5) -- (-0.5,5);
\draw (-0.5,2.5) -- (0.5,2.5);
\draw (0.5,-2.5) -- (0.5,-5);
\draw (-0.5,-2.5) -- (-0.5,-5);
\draw (-0.5,-2.5) -- (0.5,-2.5);

\draw [thick,dotted] (0.5,5) -- (-0.5,5);
\draw [thick,dotted] (-5,0.5) -- (-5,-0.5);
\draw [thick,dotted] (0.5,-5) -- (-0.5,-5);
\draw [thick,dotted] (5,0.5) -- (5,-0.5);

\coordinate [label=center:\textbullet] (B) at (2.21,1.27);
\coordinate [label=-135:$+$] (B) at (2.21,1.27);
\coordinate [label=center:\textbullet] (B) at (1.27,2.21);
\coordinate [label=-135:$+$] (B) at (1.27,2.21);
\coordinate [label=center:\textbullet] (B) at (1.80,-1.80);
\coordinate [label=135:$-$] (B) at (1.80,-1.80);
\coordinate [label=center:\textbullet] (B) at (-1.27,-2.21);
\coordinate [label=45:$+$] (B) at (-1.27,-2.21);
\coordinate [label=center:\textbullet] (B) at (-2.21,-1.27);
\coordinate [label=45:$-$] (B) at (-2.21,-1.27);

\draw [thick] (6,0) -- (7.5,0);
\draw [thick] (6,0.2) -- (6,-0.2);
\fill (7.2,0.2)--(7.5,0)--(7.2,-0.2)--cycle;

\coordinate [label=0:$\mathrm{tr}\left((H\Lambda^{-1})(s\Lambda^{-1})^2(Hs_-\Lambda^{-3})(H\Lambda^{-1})(s s_-\Lambda^{-3})(H\Lambda^{-1})\right)$] (B) at (8,0);

\end{tikzpicture}

\caption{Boundary component}
\label{pic2}

\end{figure}

Consider a boundary component of some non-nodal critical ribbon graph with boundary~$G\in G_N^{cr,nn}$. Recall that, by definition, the boundary edges are the boundary segments between successive vertices which are not illegal boundary marked points. Let $e_1,e_2,\ldots,e_p$ be the edges of the boundary component, ordered in the clockwise direction. Moreover we orient each edge in the clockwise direction, so that any boundary edge points from a source vertex to a target vertex. Let
$$
bp(e_i):=
\begin{cases}
H,&\text{if the source of $e_i$ is of degree $3$},\\
s,&\text{if the source of $e_i$ is of degree $2$}.
\end{cases}
$$
The combinatorial formula~\eqref{eq:definition of f} suggests that to the boundary component we should assign the following expression:
\begin{gather}\label{eq:boundary expression}
\frac{1}{|\Aut|}\tr\left[\prod_{i=1}^p \left(bp(e_i)\frac{1}{m(e_i)+1}{2m(e_i)\choose m(e_i)}s_-^{m(e_i)}\Lambda^{-2m(e_i)-1}\right)\right]\in\mbC[h_{i,j}],
\end{gather}
where $\Aut$ denotes the automorphism group of the boundary component. An example is shown in~Fig.~\ref{pic2}. For a moment we ignore the combinatorial coefficient~\eqref{eq:new coefficient}. If we sum expressions~\eqref{eq:boundary expression} over all possible choices of a boundary component, we get
\begin{gather}\label{eq:tmp expression}
\frac{1}{2}\tr\left[\sum_{k\ge 1}\frac{1}{k}\left((H+s)G(\Lambda,s_-)\right)^k-\sum_{k\ge 1}\frac{1}{k}\left((H-s)G(\Lambda,s_-)\right)^k\right].
\end{gather}
Let us deal more carefully with the combinatorial coefficient~\eqref{eq:new coefficient}. Denote by~$v_{B3}(G)$ the number of the boundary vertices of degree~$3$ of the graph~$G$ and by~$e_B(G)$ the number of the boundary edges. Since $3v_I(G)+v_{B3}(G)=2e_I(G)$, we have
\begin{align*}
\tc(G)=&2^{-e_I(G)+2v_I(G)-v_{BM+}(G)+b(G)}=2^{e_I(G)-v_I(G)-v_{B3}(G)-v_{BM+}(G)+b(G)},\\
(-1)^{|\Edges(G)|}=&(-1)^{e_I(G)}(-1)^{e_B(G)}=i^{3v_I(G)+v_{B3}(G)}(-1)^{v_{B3}(G)+v_{BM+}(G)}=i^{v_I(G)+v_{B3}(G)}(-1)^{v_{BM+}(G)}.
\end{align*}
Therefore, we have to rescale expression~\eqref{eq:tmp expression} in the following way:
\begin{gather*}
\tr\left[\sum_{k\ge 1}\frac{1}{k}\left(\frac{iH-s}{2}G(\Lambda,s_-)\right)^k-\sum_{k\ge 1}\frac{1}{k}\left(\frac{iH+s}{2}G(\Lambda,s_-)\right)^k\right].
\end{gather*}
We immediately recognize here the function~$I_B(H,\Lambda,s,s_-)$. Again, the Wick formula together with Kontsevich's formula~\eqref{eq:Kontsevich's formula} and the combinatorial formula~\eqref{eq:definition of f} imply that
$$
f^o_N=c_{\Lambda,N}\int_{\mcH_N}^{form}e^{\frac{i}{6}\tr H^3+I_B-\frac{1}{2}\tr H^2\Lambda}dH.
$$
The proposition is proved.
\end{proof}

\subsection{Convergent matrix integral}\label{subsection:convergent matrix integral}

One can show that the integral
$$
c_{\Lambda,N}\int_{\mcH_N}e^{\frac{i}{6}\tr H^3-\frac{1}{2}\tr H^2\Lambda}dH
$$
is absolutely convergent and determines a well-defined function of $(\lambda_1,\ldots,\lambda_N)\in\mbR^N_{>0}$. It is not hard to show that the asymptotic expansion of it, when $\lambda_i\to\infty$, is given by~$\tau^c_N$ (\cite{Kon92}).

In our case we can also make sense of the integral in~\eqref{eq:matrix integral for f^o} as a convergent integral. Suppose that~$\lambda_i$ are positive real numbers, $s_-$ is a real number such that $s_-<\frac{\lambda_i^2}{4}$, $i=1,\ldots,N$, and~$s$ is a purely imaginary complex number. Then the integral
$$
c_{\Lambda,N}\int_{\mcH_N} e^{\frac{i}{6}\tr H^3-\frac{1}{2}\tr H^2\Lambda+I_B}dH=c_{\Lambda,N}\int_{\mcH_N} e^{\frac{i}{6}\tr H^3-\frac{1}{2}\tr
H^2\Lambda}\det\left(\frac{\Lambda+\sqrt{\Lambda^2-4s_-}-iH-s}{\Lambda+\sqrt{\Lambda^2-4s_-}-iH+s}\right)dH
$$
is absolutely convergent and determines a well-defined function of $\lambda_1,\ldots,\lambda_N,s$ and $s_-$.

%%%%%%%%%%%%%%%%%%%%%%%%%%%%%%%%%%%%%%%%%%%%%%%%%%%%%%%%%%%%%%%%%%%%%%%%%
%%%%%%%%%%%%%%%%%%%%%%%%%%%%%%%%%%%%%%%%%%%%%%%%%%%%%%%%%%%%%%%%%%%%%%%%%

\section{Formal Fourier transform of $\tau^o$}

In this section we introduce a certain version of the Fourier transform. It happens that after this transformation formula~\eqref{eq:relation} becomes much simpler. The main result of this section is Proposition~\ref{proposition:convergent Fourier transform}.

We again fix an integer $N\ge 1$ throughout this section and set the genus parameter~$u$ to be equal to~$1$.

\subsection{Formal Fourier transform}

Here we define our version of the Fourier transform and describe its main properties. Section~\ref{subsubsection:preliminary} is preliminary. The main definition is contained in Section~\ref{subsubsection:first Fourier transform}. In Section~\ref{subsubsection:second Fourier transform} there is a slightly different version of it that will also be useful. Section~\ref{subsubsection:properties} is devoted to the properties of our Fourier transform.

\subsubsection{Fourier transform for $\mbC[[s]]$}\label{subsubsection:preliminary}

For an arbitrary positive real $z\in\mbR_{>0}$ and a non-negative integer $m\in\mbZ_{\ge 0}$, we have the following classical formula:
$$
\sqrt\frac{z}{2\pi}\int_{\mbR}s^me^{-\half s^2 z}ds=
\begin{cases}
0,&\text{if $m$ is odd};\\
\frac{(m-1)!!}{z^{m/2}},&\text{if $m$ is even}.
\end{cases}
$$

A power series $f\in\mbC[[s]]$ is called admissible if it has the form $f=e^{\frac{s^3}{6}}\tf(s)$, where $\tf(s)\in\mbC[s]$. The space of admissible power series is denoted by $\mbC[[s]]^{adm}\subset\mbC[[s]]$. For an admissible power series $f\in\mbC[[s]]^{adm}$, $f=e^{\frac{s^3}{6}}\tf$, the formal Fourier transform $\Phi^{form}_s[f](z)\in\mbC[[z^{-1},z]$ is defined by
\begin{multline*}
\Phi_s^{form}[f](z):=\sqrt\frac{z}{2\pi}\int^{form}_{\mbR}\tf(-is+z)e^{\frac{i}{6}s^3-\half s^2 z}ds:=\\
=\sum_{k\ge 0}\sqrt\frac{z}{2\pi}\int_{\mbR}\frac{\tf(-is+z)}{k!}\left(\frac{i}{6}s^3\right)^ke^{-\half s^2
z}ds\in\mbC[[z^{-1},z].
\end{multline*}
The reader can see that the formal Fourier transform of~$f$ is the asymptotic expansion of the integral
\begin{gather*}
\sqrt\frac{z}{2\pi}\int_{\mbR}\tf(-is+z)e^{\frac{i}{6}s^3-\half s^2 z}ds,
\end{gather*}
when $z$ goes to $+\infty$.

\subsubsection{Fourier transform for $\mcR_N[[s]]$}\label{subsubsection:first Fourier transform}

An element $f(\Lambda,s)\in\mcR_N[[s]]$ will be called admissible, if the series $\tf=fe^{-\frac{s^3}{6}}=\sum_{i\ge 0}s^i\tf_i$, $\tf_i\in\mcR_N$, satisfies the following property.
There exists a sequence~$d_i$, $i\ge 0$, of positive integers such that $d_i\to\infty$, when $i\to\infty$, and $\tf_i\in\mcR_N^{-d_i}$. The space of all admissible elements will be
denoted by~$\mcR_N[[s]]^{adm}$.

For an admissible element $f\in\mcR_N[[s]]^{adm}$, it can be seen that the element~$\tf=e^{-\frac{s^3}{6}}f$ can be expressed in the following way:
$$
\tf(\Lambda,s)=\sum_m g_m(\Lambda)P_m(s),
$$
where $g_m\in\mbC(\lambda_1,\ldots\lambda_N)$ are homogeneous linearly independent rational functions and $P_m(s)\in\mbC[s]$ are polynomials in~$s$. The formal Fourier
transform~$\Phi^{form}_s[f](\Lambda,z)\in\mcR_N[[z^{-1},z]]$ is defined by
\begin{gather*}
\Phi^{form}_s[f](\Lambda,z):=\sum_m g_m(\Lambda)\sqrt{\frac{z}{2\pi}}\int^{form}_{\mathbb R}e^{\frac{is^3}{6}-\frac{1}{2}s^2z}P_m(-is+z)ds\in\mcR_N[[z^{-1},z]].
\end{gather*}
We again see that the formal Fourier transform can be considered as the asymptotic expansion of the integral
$$
\sqrt\frac{z}{2\pi}\int_{\mbR}\tf(\Lambda,-is+z)e^{\frac{i}{6}s^3-\half s^2 z}ds,
$$
when $z$ goes to $+\infty$.

\subsubsection{Fourier transform for $\mbC[[t_0,t_1,t_2,\ldots]][[s]]$}\label{subsubsection:second Fourier transform}

A formal Fourier transform for power series from $\mbC[[t_0,t_1,\ldots]][[s]]$ is introduced completely analogously. We introduce a grading in the ring~$\mbC[[t_0,t_1,\ldots]]$ assigning to~$t_i$ the degree~$2i+1$. For $d\ge 0$ let $\mbC[[t_0,t_1,\ldots]]^d$ be the subspace of $\mbC[[t_0,t_1,\ldots]]$ that consists of power series of the form $
f=\sum_{m\ge d}f_m$, where $f_m\in\mbC\left[t_0,t_1,\ldots,t_{\lfloor\frac{m-1}{2}\rfloor}\right]$ is a homogeneous polynomial of degree~$m$. The subspace $\mbC[[t_0,t_1,\ldots]][[s]]^{adm}\subset \mbC[[t_0,t_1,\ldots]][[s]]$ is defined similarly to the previous section, using the filtration
$$
\mbC[[t_0,t_1,\ldots]]=\mbC[[t_0,t_1,\ldots]]^0\supset\mbC[[t_0,t_1,\ldots]]^1\supset\mbC[[t_0,t_1,\ldots]]^2\supset\ldots.
$$
It is easy to see that if $f\in\mbC[[t_0,t_1,\ldots]][[s]]$ is admissible, then
$$
f|_{t_i=-(2i-1)!!\tr\Lambda^{-2i-1}}\in\mbC[[\lambda_1^{-1},\ldots,\lambda_N^{-1}]]
$$
is also admissible.

For an admissible element $f\in\mbC[[t_0,t_1,\ldots]][[s]]^{adm}$ the formal Fourier transform is defined by
\begin{gather*}
\Phi^{form}_s[f](t_*,z):=\sqrt{\frac{z}{2\pi}}\int^{form}_{\mathbb R}e^{\frac{is^3}{6}-\frac{1}{2}s^2z}\tf(t_*,-is+z)ds\in\mbC[[t_0,t_1,\ldots]][[z^{-1},z]],
\end{gather*}
where $\tf:=e^{-\frac{s^3}{6}}f$.

The relation to the Fourier transform from the previous section is given by
\begin{gather}\label{eq:relation of two transforms}
\left.\Phi^{form}_s[f]\right|_{t_i=-(2i-1)!!\tr\Lambda^{-2i-1}}=\Phi^{form}_s\left[\left.f\right|_{t_i=-(2i-1)!!\tr\Lambda^{-2i-1}}\right].
\end{gather}

\subsubsection{Properties}\label{subsubsection:properties}

Let us describe three basic properties of our formal Fourier transform. The symbol $\mcA$ will stand for~$\mcR_N$ or~$\mbC[[t_0,t_1,\ldots]]$.

\begin{lemma}\label{lemma:injectivity}
The map $\Phi^{form}_s\colon\mcA[[s]]^{adm}\to\mcA[[z^{-1},z]]$ is injective.
\end{lemma}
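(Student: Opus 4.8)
The plan is to prove injectivity by exploiting the triangularity of $\Phi^{form}_s$ with respect to both the grading on $\mcA$ and the filtration by powers of $z$. First I would reduce everything to a scalar computation. Writing an admissible $f\in\mcA[[s]]^{adm}$ as $f=e^{\frac{s^3}{6}}\tf$ with $\tf=\sum_{i\ge 0}s^i\tf_i$, $\tf_i\in\mcA$, the coefficients $\tf_i$ are independent of the integration variable and so factor out of the formal Gaussian integral. By the very definition of the transform this gives
\begin{gather*}
\Phi^{form}_s[f]=\sum_{i\ge 0}\tf_i\cdot\Phi_i(z),\qquad \Phi_i(z):=\sqrt{\tfrac{z}{2\pi}}\int^{form}_{\mbR}(z-is)^i e^{\frac{i}{6}s^3-\half s^2 z}\,ds\in\mbC[[z^{-1},z]],
\end{gather*}
so the whole transform is governed by the scalar series $\Phi_i$, whose coefficients lie in $\mbC$.

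Next I would pin down the top power of $z$ in $\Phi_i$. Expanding $(z-is)^i$ and $e^{\frac{i}{6}s^3}$ and applying the Gaussian moment formula, every resulting monomial contributes a power $z^{\,i-j-(3n+j)/2}$ with $0\le j\le i$, $n\ge 0$ and $3n+j$ even. This exponent is maximal precisely at $n=j=0$, where it equals $i$ and the accompanying coefficient is $\sqrt{z/2\pi}\int_{\mbR}e^{-\half s^2 z}ds=1$; hence
\begin{gather*}
\Phi_i(z)=z^i+(\text{strictly lower powers of }z).
\end{gather*}
This is the triangularity that drives the proof.

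Then I would bring in the grading. Equip $\mcA$ with its weight grading (on $\mcR_N$ the weight-$m$ part is the degree $-m$ homogeneous part; on $\mbC[[t_0,t_1,\ldots]]$ the variable $t_i$ has weight $2i+1$), and for $a\in\mcA$ write $a_W$ for its weight-$W$ component. Admissibility provides integers $d_i\to\infty$ with $\tf_i\in\mcR_N^{-d_i}$ (resp. $\tf_i\in\mbC[[t_0,t_1,\ldots]]^{d_i}$), so $(\tf_i)_W=0$ whenever $d_i>W$; thus for each fixed $W$ only the finitely many indices $i$ with $d_i\le W$ can contribute. Since each $\Phi_i$ has scalar coefficients, the weight-$W$ part of $\Phi^{form}_s[f]$ is the \emph{finite} sum $\Theta_W:=\sum_{i}(\tf_i)_W\,\Phi_i(z)$, and $\Phi^{form}_s[f]=0$ forces $\Theta_W=0$ for every $W$. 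If some $(\tf_i)_W$ were nonzero, let $i_0$ be the largest such index; by the previous step no $\Phi_i$ with $i\neq i_0$ contributes to the power $z^{i_0}$, so the coefficient of $z^{i_0}$ in $\Theta_W$ equals $(\tf_{i_0})_W\neq 0$, a contradiction. Hence $(\tf_i)_W=0$ for all $i$ and $W$, so $\tf=0$ and $f=0$.

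The one point that needs genuine care — and which I regard as the crux — is the legitimacy of breaking the a priori infinite sum $\sum_i\tf_i\Phi_i$ into the finite per-weight sums $\Theta_W$ and then reading off a well-defined top power of $z$ inside a single weight. This is exactly where the hypothesis $d_i\to\infty$ is indispensable: it guarantees that each coefficient of a fixed $z^p$ in a fixed weight is a finite $\mbC$-linear combination, so that the leading-power argument applies. The reasoning is uniform in the two cases $\mcA=\mcR_N$ and $\mcA=\mbC[[t_0,t_1,\ldots]]$, and the two are moreover linked through the specialization~\eqref{eq:relation of two transforms}.
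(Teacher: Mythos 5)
Your argument is correct and is essentially the paper's own proof: both reduce to the fact that, after grouping $\tf$ by homogeneous components of $\mcA$ (which admissibility makes a polynomial-in-$s$ datum per component), the formal Gaussian transform sends a degree-$k$ polynomial in $s$ to $a_k z^k$ plus lower powers of $z$, so the leading coefficient survives. Your explicit bookkeeping of the finiteness via $d_i\to\infty$ is exactly what the paper's decomposition $\tf=\sum_m g_m(\Lambda)P_m(s)$ encodes implicitly.
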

\begin{proof}
Suppose that $\mcA=\mcR_N$. The proof for $\mcA=\mbC[[t_0,t_1,\ldots]]$ is the same. Suppose we have $\Phi^{form}_s[f]=0$ for some admissible $f\in\mcR_N[[s]]^{adm}$. Let $\tf=e^{-\frac{s^3}{6}}f$. We have
$$
\tf=\sum_{m}g_m(\Lambda)P_m(s),
$$
where $g_m(\Lambda)\in\mbC(\lambda_1,\ldots,\lambda_N)$ are linearly independent homogeneous rational functions and $P_m(s)\in\mbC[s]$ are polynomials in $s$. Since
$\Phi^{form}_s[f]=0$, we get
$$
\sqrt{\frac{z}{2\pi}}\int^{form}_{\mathbb R}e^{\frac{is^3}{6}-\frac{1}{2}s^2z}P_m(-is+z)ds=0,
$$
for all $m$. Let us choose a non-zero $P_m$ and let $P_m=\sum_{i=0}^k a_is^i$, where $a_k\ne 0$. A direct calculation shows that
$$
\sqrt{\frac{z}{2\pi}}\int^{form}_{\mathbb R}e^{\frac{is^3}{6}-\frac{1}{2}s^2z}P_m(-is+z)ds=a_kz^k+\sum_{i<k}b_iz^i.
$$
Therefore this integral is non-zero. This contradiction proves the lemma.
\end{proof}

\begin{lemma}\label{lemma:Fourier of derivative}
For any admissible element $f\in\mcA[[s]]^{adm}$, the derivative $\frac{\d f}{\d s}$ is also admissible and $\Phi^{form}_s\left[\frac{\d f}{\d
s}\right]=\frac{z^2}{2}\Phi^{form}_s[f]$.
\end{lemma}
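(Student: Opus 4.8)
The plan is to treat the two assertions separately: first that admissibility is preserved under $\partial/\partial s$, and then the transform identity, which I would extract from a ``formal integration by parts'' inside the defining integral. Throughout I work with $\tf:=e^{-\frac{s^3}{6}}f$, so that $\Phi^{form}_s[f](z)=\sqrt{\frac{z}{2\pi}}\int^{form}_{\mbR}\tf(-is+z)\,e^{\frac{i}{6}s^3-\frac12 s^2z}ds$, and both base rings $\mcA=\mcR_N$ and $\mcA=\mbC[[t_0,t_1,\ldots]]$ are handled by the same manipulation of the integrand (before any decomposition into linearly independent coefficients).

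For admissibility I would write $f=e^{\frac{s^3}{6}}\tf$ with $\tf=\sum_{i\ge 0}s^i\tf_i$, $\tf_i\in\mcR_N^{-d_i}$ and $d_i\to\infty$. Since $\frac{\partial f}{\partial s}=e^{\frac{s^3}{6}}\bigl(\frac{s^2}{2}\tf+\tf'\bigr)$, the reduced series is $e^{-\frac{s^3}{6}}\frac{\partial f}{\partial s}=\frac{s^2}{2}\tf+\tf'$, whose $s^i$-coefficient is $\frac12\tf_{i-2}+(i+1)\tf_{i+1}\in\mcR_N^{-\min(d_{i-2},d_{i+1})}$. As $\min(d_{i-2},d_{i+1})\to\infty$ when $i\to\infty$, the derivative is again admissible; the identical bookkeeping (with the grading by $2i+1$) applies to $\mcA=\mbC[[t_0,t_1,\ldots]]$.

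For the identity, the key observation is that $e^{-\frac{s^3}{6}}\frac{\partial f}{\partial s}$ evaluated at $-is+z$ is $\frac{(-is+z)^2}{2}\tf(-is+z)+\tf'(-is+z)$, and since $\frac{(-is+z)^2}{2}-\frac{z^2}{2}=-\frac{s^2}{2}-isz$, the integrand of $\Phi^{form}_s[\frac{\partial f}{\partial s}]-\frac{z^2}{2}\Phi^{form}_s[f]$ (up to the common prefactor $\sqrt{z/2\pi}$) is
\[
\Bigl[\tf'(-is+z)+\bigl(-\tfrac{s^2}{2}-isz\bigr)\tf(-is+z)\Bigr]e^{\frac{i}{6}s^3-\frac12 s^2z}.
\]
Differentiating directly, using $\partial_s\tf(-is+z)=-i\tf'(-is+z)$ and $\partial_s e^{\frac{i}{6}s^3-\frac12 s^2z}=(\frac{i}{2}s^2-sz)e^{\frac{i}{6}s^3-\frac12 s^2z}$, one checks that this expression equals $i\,\partial_s\bigl[\tf(-is+z)e^{\frac{i}{6}s^3-\frac12 s^2z}\bigr]$. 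Hence the whole difference is the formal integral of a total $s$-derivative, and the claim follows once that integral is shown to vanish.

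The main obstacle is precisely justifying that $\int^{form}_{\mbR}\partial_s[\,\cdots\,]ds=0$: the formal integral is not an honest integral, so vanishing boundary terms cannot be invoked. I would instead verify it at the level of the defining Gaussian moments, writing the bracketed antiderivative as $Q(s)e^{-\frac12 s^2z}$ with $Q=\sum_m q_m s^m$ (the factor $e^{\frac{i}{6}s^3}$ being expanded), so that $\partial_s[Qe^{-\frac12 s^2z}]=(Q'-szQ)e^{-\frac12 s^2z}$. Applying $\sqrt{z/2\pi}\int_{\mbR}s^n e^{-\frac12 s^2z}ds=(n-1)!!\,z^{-n/2}$ for even $n$ and using $(n+1)(n-1)!!=(n+1)!!$, a one-line reindexing matches the two resulting series in $z^{-1}$ term by term; admissibility guarantees these series are well defined. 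This identity $\int^{form}\partial_s(\cdots)=0$ is the formal analog of integration by parts and is the only genuinely non-formal point in the argument.
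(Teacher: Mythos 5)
Your proposal is correct and follows essentially the same route as the paper: compute the reduced series $e^{-s^3/6}\partial_s f=\tfrac{s^2}{2}\tf+\tf'$ to get admissibility, then recognize the difference $\Phi^{form}_s[f_s]-\tfrac{z^2}{2}\Phi^{form}_s[f]$ as the formal integral of a total $s$-derivative. The paper dismisses that last step with the phrase ``integration by parts''; your term-by-term verification via the Gaussian moment identity $(n+1)(n-1)!!=(n+1)!!$ is exactly the justification that phrase is standing in for.
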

\begin{proof}
The proof is again presented in the case $\mcA=\mcR_N$. If $f=e^{\frac{s^3}{6}}\tf$, then $\frac{\d f}{\d s}=e^{\frac{s^3}{6}}\left(\frac{s^2}{2}\tf+\frac{\d\tf}{\d s}\right)$.
Therefore, $\frac{\d f}{\d s}$ is also admissible. Now we compute
\begin{multline*}
\Phi^{form}_s[f_s](\Lambda,z)=\sqrt\frac{z}{2\pi}\int^{form}_{\mbR}e^{\frac{is^3}{6}-\frac{1}{2}s^2z}\left(\frac{(-is+z)^2}{2}\tf(\Lambda,-is+z)+i\frac{\d}{\d
s}\tf(\Lambda,-is+z)\right)ds=\\
=\frac{z^2}{2}\Phi^{form}_s[f](\Lambda,z).
\end{multline*}
The last equality follows from integration by parts. The lemma is proved.
\end{proof}

\begin{lemma}\label{lemma:Fourier and multiplication}
For any admissible element $f\in\mcA[[s]]^{adm}$, the product $s f$ is also admissible and
$$
\Phi^{form}_s[s f]=-\left(\frac{1}{z}\frac{\d}{\d z}-\frac{1}{2z^2}-z\right)\Phi^{form}_s[f].
$$
\end{lemma}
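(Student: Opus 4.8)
The plan is to follow the same pattern as the proof of Lemma~\ref{lemma:Fourier of derivative}: I treat the case $\mcA=\mcR_N$ (the case $\mcA=\mbC[[t_0,t_1,\ldots]]$ being word for word identical, and the transfer being guaranteed by~\eqref{eq:relation of two transforms}), reduce everything to a manipulation of the defining formal integral, and extract the differential operator in $z$ by integration by parts. First I would dispose of admissibility. Writing $f=e^{\frac{s^3}{6}}\tf$, we have $\widetilde{sf}=e^{-\frac{s^3}{6}}(sf)=s\tf$, so if $\tf=\sum_{i\ge 0}s^i\tf_i$ with $\tf_i\in\mcR_N^{-d_i}$ and $d_i\to\infty$, then $s\tf=\sum_{i\ge 0}s^{i+1}\tf_i$ has its $s^j$-coefficient equal to $\tf_{j-1}\in\mcR_N^{-d_{j-1}}$ for $j\ge 1$; since $d_{j-1}\to\infty$, the product $sf$ is again admissible.

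For the formula, I set $J(\Lambda,z):=\int^{form}_{\mbR}e^{\frac{is^3}{6}-\half s^2 z}\tf(\Lambda,-is+z)\,ds$, so that $\Phi^{form}_s[f]=\sqrt{z/2\pi}\,J$. Since $\widetilde{sf}=s\tf$, the definition gives
\[
\Phi^{form}_s[sf]=\sqrt{\tfrac{z}{2\pi}}\int^{form}_{\mbR}e^{\frac{is^3}{6}-\half s^2 z}(-is+z)\,\tf(\Lambda,-is+z)\,ds.
\]
Splitting $(-is+z)=z+(-is)$ immediately produces the term $z\,\Phi^{form}_s[f]$, so it remains to evaluate $-i\sqrt{z/2\pi}\int^{form}_{\mbR}s\,e^{\frac{is^3}{6}-\half s^2 z}\tf(\Lambda,-is+z)\,ds$.

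The heart of the argument is to identify this last integral with a $z$-derivative of $J$. Differentiating $J$ under the integral sign, the $z$-dependence enters both through the Gaussian factor $e^{-\half s^2 z}$ and through the shifted argument $-is+z$ of $\tf$. Using the chain-rule identity $(\d_w\tf)(-is+z)=i\,\d_s\!\left[\tf(-is+z)\right]$ and integrating the resulting $\d_s$ by parts (the formal boundary terms vanish), the two contributions proportional to $s^2/2$ cancel exactly, leaving
\[
\frac{\d J}{\d z}=iz\int^{form}_{\mbR}s\,e^{\frac{is^3}{6}-\half s^2 z}\tf(\Lambda,-is+z)\,ds.
\]
Hence the remaining integral equals $-\frac{i}{z}\,\d_z J$, and substituting yields $\Phi^{form}_s[sf]=-\frac{1}{z}\sqrt{z/2\pi}\,\d_z J+z\,\Phi^{form}_s[f]$. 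Finally I rewrite $J=\sqrt{2\pi}\,z^{-1/2}\Phi^{form}_s[f]$, compute $\sqrt{z/2\pi}\,\d_z J=\d_z\Phi^{form}_s[f]-\frac{1}{2z}\Phi^{form}_s[f]$, and collect terms to obtain exactly $-\left(\frac{1}{z}\frac{\d}{\d z}-\frac{1}{2z^2}-z\right)\Phi^{form}_s[f]$.

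The main obstacle — indeed essentially the only nonroutine point — is the cancellation in the computation of $\d_z J$: one must carefully track the chain rule for the shifted argument $-is+z$ and justify the integration by parts inside the formal-integration framework, i.e.\ termwise, where each term is a genuine Gaussian integral with vanishing boundary contributions. Everything else is bookkeeping: the conversion between $J$ and $\Phi^{form}_s[f]$, and the fact that the linearity of $\Phi^{form}_s$ lets the homogeneous coefficients $g_m(\Lambda)$ pass through, so that the identity reduces to the polynomial-in-$s$ case already implicit in the previous lemmas.
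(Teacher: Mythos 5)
Your argument is correct and is essentially the paper's own proof run in the opposite direction: both rest on differentiating the defining formal integral in $z$, converting the derivative of the shifted argument via $(\d_w\tf)(-is+z)=i\,\d_s[\tf(-is+z)]$, and integrating by parts so that the $s^2/2$ contributions cancel, leaving the operator $\frac{1}{z}\frac{\d}{\d z}-\frac{1}{2z^2}-z$. (The paper applies the operator to $\Phi^{form}_s[f]$ and recognizes $-\Phi^{form}_s[sf]$, whereas you start from $\Phi^{form}_s[sf]$ and reassemble the operator; the bookkeeping is identical.)
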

\begin{proof}
The admissibility of the product $sf$ is obvious. We compute
\begin{multline*}
\left(\frac{1}{z}\frac{\d}{\d z}-\frac{1}{2 z^2}\right)\Phi_s^{form}[f]=\sqrt\frac{z}{2\pi}\frac{1}{z}\int^{form}_\mbR\left[\left(i\frac{\d}{\d s}-\frac{s^2}{2}\right)\tf(-is+z)\right]e^{\frac{i}{6}s^3-\frac{1}{2}s^2 z}ds=\\
=\sqrt\frac{z}{2\pi}\int^{form}_\mbR is\tf(-is+z)e^{\frac{i}{6}s^3-\frac{1}{2}s^2 z}ds.
\end{multline*}
Therefore,
$$
\left(\frac{1}{z}\frac{\d}{\d z}-\frac{1}{2z^2}-z\right)\Phi^{form}_s[f]=\sqrt\frac{z}{2\pi}\int^{form}_\mbR(is-z)\tf(-is+z)e^{\frac{i}{6}s^3-\frac{1}{2}s^2 z}ds=\Phi_s^{form}[sf].
$$
The lemma is proved.
\end{proof}

%%%%%%%%%%%%%%%%%%%%%%%%%%%%%%%%%%%%%%%%%%%%%%%%%%%%%%%%%%%%%%%%%%

\subsection{Fourier transform of $\tau^o$}

Recall that
$$
\tau^o_N(\Lambda,s):=\left.\tau^o\right|_{t_i=-(2i-1)!!\tr\Lambda^{-2i-1}}\in\mbC[[\lambda_1^{-1},\ldots,\lambda_N^{-1}]][[s]].
$$
From formula~\eqref{eq:reformulation} it is easy to see that the series~$\tau^o_N$ is admissible.
It is also easy to see that $f^o_N=\sum_{i,j\ge 0}s^i s_-^jf^o_{N,i,j}$, where $f^o_{N,i,j}\in\mcR_N^{-i-2j}$. Thus, for any $j\ge 0$, the series $e^{\frac{s^3}{6}}\sum_{i\ge 0}s^if^o_{N,i,j}$ is
admissible. We conclude that the Fourier transform $\Phi^{form}_s\left[e^{\frac{s^3}{6}}f^o_N\right](\Lambda,z,s_-)$ with respect to the variable~$s$ is well-defined.

\begin{prop}\label{proposition:formal Fourier transform}
We have
$$
\Phi^{form}_s[\tau^o_N](\Lambda,z)=\Phi^{form}_s\left[e^{\frac{s^3}{6}}f^o_N\right]\left(\Lambda,z,\frac{z^2}{4}\right).
$$
\end{prop}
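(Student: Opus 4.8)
The plan is to push the operator identity of Lemma~\ref{lemma:relation} through the Fourier transform, converting the differential operator $e^{\frac12\frac{\p^2}{\p s\,\p s_-}}$ into a shift in the spectator variable $s_-$ by means of Lemma~\ref{lemma:Fourier of derivative}. Write $g:=e^{s^3/6}f^o_N$ and expand it in $s_-$ as $g=\sum_{k\ge 0}s_-^k g_k$, where $g_k=e^{s^3/6}\sum_{i\ge0}s^i f^o_{N,i,k}$ is admissible (this is exactly the observation recorded just before the statement). First I would expand $e^{\frac12\frac{\p^2}{\p s\,\p s_-}}=\sum_{k\ge0}\frac{1}{2^k k!}\frac{\p^k}{\p s^k}\frac{\p^k}{\p s_-^k}$, apply it to $g$, and set $s_-=0$. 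Since $\left.\frac{\p^k}{\p s_-^k}g\right|_{s_-=0}=k!\,g_k$, Lemma~\ref{lemma:relation} becomes
$$
\tau^o_N=\sum_{k\ge0}\frac{1}{2^k}\frac{\p^k g_k}{\p s^k},
$$
an identity in $\mcR_N[[s]]$.

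Next I would apply $\Phi^{form}_s$ termwise. Iterating Lemma~\ref{lemma:Fourier of derivative} gives $\Phi^{form}_s\!\left[\frac{\p^k g_k}{\p s^k}\right]=\left(\frac{z^2}{2}\right)^k\Phi^{form}_s[g_k]$, and, writing $\Phi^{form}_s[g](\Lambda,z,s_-)=\sum_{k\ge0}s_-^k G_k(\Lambda,z)$ with $G_k=\Phi^{form}_s[g_k]$, I obtain
$$
\Phi^{form}_s[\tau^o_N]=\sum_{k\ge0}\frac{1}{2^k}\left(\frac{z^2}{2}\right)^k G_k=\sum_{k\ge0}\left(\frac{z^2}{4}\right)^k G_k.
$$
The right-hand side is, by construction, the series $\Phi^{form}_s\!\left[e^{s^3/6}f^o_N\right](\Lambda,z,s_-)$ with $s_-$ specialized to $z^2/4$, which is precisely the asserted equality. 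Equivalently, one may phrase the whole computation as the statement that $\Phi^{form}_s$ turns $e^{\frac12\frac{\p^2}{\p s\,\p s_-}}$ into the shift operator $e^{\frac{z^2}{4}\frac{\p}{\p s_-}}$, whose value at $s_-=0$ is evaluation at $s_-=z^2/4$ by Taylor's formula.

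The main obstacle is not the algebra but the bookkeeping needed to make the two infinite sums legitimate: I must check that the series for $\tau^o_N$ converges in $\mcR_N[[s]]$, that $\Phi^{form}_s$ may indeed be applied term by term, and above all that the specialization $s_-=z^2/4$, i.e. the sum $\sum_k(z^2/4)^k G_k$, defines an element of $\mcR_N[[z^{-1},z]]$. All of this rests on the degree property $f^o_{N,i,j}\in\mcR_N^{-i-2j}$. Assigning weight $1$ to each $\lambda_i$ and to $z$, a direct expansion of the defining integral shows that $\Phi^{form}_s\!\left[e^{s^3/6}s^i\right]$ is supported in the $z$-powers $z^{i},z^{i-3},z^{i-6},\dots$; hence in $G_k=\sum_i f^o_{N,i,k}\,\Phi^{form}_s\!\left[e^{s^3/6}s^i\right]$ a contribution to a fixed power $z^p$ at a fixed total weight $-W$ forces both $p\le i$ and $i+2k\le W+p$, whence $k\le W/2$ and $i\le W+p-2k$. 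Thus for every pair $(p,W)$ only finitely many $(i,k)$ contribute, which simultaneously guarantees convergence of all the series involved and justifies the termwise application of the linear map $\Phi^{form}_s$. Once this finiteness is in place the identity follows formally, so I expect the write-up to spend most of its effort on the weight count rather than on the operator manipulation itself.
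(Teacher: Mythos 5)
Your proposal is correct and follows essentially the same route as the paper: combine Lemma~\ref{lemma:relation} with Lemma~\ref{lemma:Fourier of derivative} to convert $e^{\frac{1}{2}\frac{\p^2}{\p s\,\p s_-}}$ under $\Phi^{form}_s$ into the shift operator $e^{\frac{z^2}{4}\frac{\p}{\p s_-}}$, i.e.\ evaluation at $s_-=z^2/4$ after setting $s_-=0$. The only difference is that you make the termwise expansion and the weight-counting needed for convergence explicit, which the paper leaves implicit.
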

\begin{proof}
By Lemma~\ref{lemma:relation}, we have
$$
\tau^o_N=\left.e^{\frac{1}{2}\frac{\d^2}{\d s\d s_-}}\left(f^o_N e^{\frac{s^3}{6}}\right)\right|_{s_-=0}.
$$
From Lemma~\ref{lemma:Fourier of derivative} it follows that
\begin{multline*}
\Phi^{form}_s\left[e^{\frac{1}{2}\frac{\d^2}{\d s\d s_-}}\left(f^o_N e^{\frac{s^3}{6}}\right)\right](\Lambda,z,s_-)=e^{\frac{z^2}{4}\frac{\d}{\d s_-}}\left(\Phi^{form}_s\left[f^o_N
e^{\frac{s^3}{6}}\right](\Lambda,z,s_-)\right)=\\
=\Phi^{form}_s\left[f^o_N e^{\frac{s^3}{6}}\right]\left(\Lambda,z,s_-+\frac{z^2}{4}\right).
\end{multline*}
Setting $s_-=0$, we get the statement of the proposition.
\end{proof}

Propositions~\ref{proposition:formal Fourier transform} and~\ref{proposition:matrix integral} together with Section~\ref{subsection:convergent matrix integral} imply the following proposition.

\begin{prop}\label{proposition:convergent Fourier transform}
The series~$\Phi^{form}_s[\tau^o_N](\Lambda,z)$ is the asymptotic expansion of the integral
$$
c_{\Lambda,N}\sqrt{\frac{z}{2\pi}}\int_{\mbR}e^{\frac{i}{6}s^3-\frac{1}{2}s^2 z}ds\int_{\mcH_N} e^{\frac{i}{6}\tr
H^3-\frac{1}{2}\tr H^2\Lambda}\det\left(\frac{\Lambda+\sqrt{\Lambda^2-z^2}-iH+is-z}{\Lambda+\sqrt{\Lambda^2-z^2}-iH-is+z}\right)dH,
$$
when $0<z<\lambda_i$ and $z,\lambda_i\to+\infty$.
\end{prop}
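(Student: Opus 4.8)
The plan is to assemble Propositions~\ref{proposition:formal Fourier transform} and~\ref{proposition:matrix integral} and then push the convergence statement of Section~\ref{subsection:convergent matrix integral} through the Fourier transform; no new construction is needed, only a bookkeeping of substitutions followed by one genuinely analytic identification. First I would invoke Proposition~\ref{proposition:formal Fourier transform} to rewrite
$$
\Phi^{form}_s[\tau^o_N](\Lambda,z)=\Phi^{form}_s\left[e^{\frac{s^3}{6}}f^o_N\right]\left(\Lambda,z,\frac{z^2}{4}\right),
$$
so that the whole problem reduces to evaluating the Fourier transform of $e^{s^3/6}f^o_N$ in the variable~$s$. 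By the definition of $\Phi^{form}_s$ on $\mcR_N[[s]]^{adm}$ (whose applicability was checked just before Proposition~\ref{proposition:formal Fourier transform}), with $\tf=f^o_N$ this transform is obtained by substituting $s\mapsto -is+z$ into $f^o_N(\Lambda,s,s_-)$ and integrating against $\sqrt{z/2\pi}\,e^{\frac{i}{6}s^3-\frac12 s^2 z}$.

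Next I would insert the matrix-integral expression for $f^o_N$ from Proposition~\ref{proposition:matrix integral}. The only $s$-dependence of $f^o_N$ sits inside the determinant $e^{I_B}$, so the substitution $s\mapsto -is+z$ passes directly into the determinant and sends the factors $-iH-s$ and $-iH+s$ to $-iH+is-z$ and $-iH-is+z$ respectively; simultaneously setting $s_-=z^2/4$ replaces $\sqrt{\Lambda^2-4s_-}$ by $\sqrt{\Lambda^2-z^2}$. This produces precisely the determinant appearing in the statement and shows that $\Phi^{form}_s[\tau^o_N](\Lambda,z)$ equals the \emph{formal} double integral
$$
c_{\Lambda,N}\sqrt{\frac{z}{2\pi}}\int^{form}_{\mbR}e^{\frac{i}{6}s^3-\frac12 s^2 z}\,ds\int^{form}_{\mcH_N}e^{\frac{i}{6}\tr H^3-\frac12\tr H^2\Lambda}\det\left(\frac{\Lambda+\sqrt{\Lambda^2-z^2}-iH+is-z}{\Lambda+\sqrt{\Lambda^2-z^2}-iH-is+z}\right)dH.
$$

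Finally I would identify this formal double integral with the asymptotic expansion of the convergent integral in the statement. The hypothesis $0<z<\lambda_i$ is exactly what forces $s_-=z^2/4<\lambda_i^2/4$, placing the inner $H$-integral in the convergence regime of Section~\ref{subsection:convergent matrix integral}: for $0<z<\lambda_i$ the Hermitian ``real parts'' $\Lambda+\sqrt{\Lambda^2-z^2}\pm z$ of the numerator and denominator are positive definite, hence both factors of the determinant are nonsingular and the ratio is a bounded smooth function of $H$ and of the real Fourier variable~$s$, while the outer Gaussian factor $e^{-\frac12 s^2 z}$ (with $z>0$) controls the $s$-integration. The point that must be justified carefully—and the step I expect to be the main obstacle—is that the term-by-term (formal) integral is genuinely the asymptotic expansion of the convergent one as $z,\lambda_i\to+\infty$. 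This is the same Kontsevich-type stationary-phase argument already used for $\tau^c_N$ in Section~\ref{subsection:convergent matrix integral}, now carried out with the extra bounded determinant factor and followed by the $s$-integration, which converts the formal $s$-expansion defining $\Phi^{form}_s$ into an asymptotic expansion in~$z$. Interchanging ``convergent integral versus its asymptotic expansion'' with this double integration is the only truly analytic ingredient; everything preceding it is the algebraic substitution bookkeeping described above.
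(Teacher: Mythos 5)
Your proposal is correct and follows exactly the route the paper takes: the paper's entire proof of this proposition is the single sentence that it follows from Propositions~\ref{proposition:formal Fourier transform} and~\ref{proposition:matrix integral} together with the convergence discussion of Section~\ref{subsection:convergent matrix integral}, which are precisely the three ingredients you combine (with the substitutions $s\mapsto -is+z$, $s_-\mapsto z^2/4$ worked out explicitly). Your additional remarks on why $0<z<\lambda_i$ places the integral in the convergence regime only make explicit what the paper leaves implicit.
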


%%%%%%%%%%%%%%%%%%%%%%%%%%%%%%%%%%%%%%%%%%%%%%%%%%%%%%%%%%%%%%%%%%%%%
%%%%%%%%%%%%%%%%%%%%%%%%%%%%%%%%%%%%%%%%%%%%%%%%%%%%%%%%%%%%%%%%%%%%%

\section{Analytical computations with the matrix integral}

Let~$N$ be an arbitrary positive integer. Recall that the series $\tau^c_N(\Lambda)\in\mbC[[\lambda_1^{-1},\ldots,\lambda_N^{-1}]]$ is defined by
$$
\tau^c_N(\Lambda):=\left.\tau^c\right|_{t_i=-(2i-1)!!\tr\Lambda^{-2i-1}}.
$$
Let $\Lambda_z:=\diag(\lambda_1,\ldots,\lambda_N,z)$. We set the genus parameter~$u$ to be equal to~$1.$ The following proposition is the key step in the proof of Theorem~\ref{theorem:main}.
\begin{prop}\label{proposition:main}
We have
\begin{gather}\label{eq:main equation}
\Phi^{form}_s\left[\tau^o_N\right]\left(\Lambda,z\right)=\tau^c_{N+1}(\Lambda_z)\sqrt{\det\left(\frac{1-z\Lambda^{-1}}{1+z\Lambda^{-1}}\right)}.
\end{gather}
\end{prop}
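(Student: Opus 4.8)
The plan is to deduce the identity by realizing both sides as asymptotic expansions of convergent matrix integrals and then proving the corresponding integral identity. By Proposition~\ref{proposition:convergent Fourier transform}, the left-hand side of \eqref{eq:main equation} is the asymptotic expansion, as $0<z<\lambda_i\to+\infty$, of
\[
c_{\Lambda,N}\sqrt{\tfrac{z}{2\pi}}\int_\mbR e^{\frac i6 s^3-\frac12 s^2 z}\,ds\int_{\mcH_N}e^{\frac i6\tr H^3-\frac12\tr H^2\Lambda}\det\!\left(\frac{M-iH+is-z}{M-iH-is+z}\right)dH,
\]
where $M:=\Lambda+\sqrt{\Lambda^2-z^2}$. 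By Kontsevich's integral (recalled in the proof of Proposition~\ref{proposition:matrix integral}), $\tau^c_{N+1}(\Lambda_z)$ is the asymptotic expansion of $c_{\Lambda_z,N+1}\int_{\mcH_{N+1}}e^{\frac i6\tr\tilde H^3-\frac12\tr\tilde H^2\Lambda_z}d\tilde H$, while $\sqrt{\det\left(\frac{1-z\Lambda^{-1}}{1+z\Lambda^{-1}}\right)}$ is an honest analytic function of the $\lambda_i$ and $z$. Since a convergent integral determines its asymptotic expansion uniquely, it suffices to prove \eqref{eq:main equation} at the level of these convergent integrals for $0<z<\lambda_i$ large.

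First I would unfold the $(N+1)$-dimensional integral using the block decomposition $\tilde H=\begin{pmatrix}H&v\\ v^*&a\end{pmatrix}$ with $H\in\mcH_N$, $v\in\mbC^N$, $a\in\mbR$. A direct computation gives
\[
\tr\tilde H^3=\tr H^3+3\,v^*Hv+3a\,v^*v+a^3,\qquad \tr\tilde H^2\Lambda_z=\tr H^2\Lambda+v^*\Lambda v+z\,v^*v+z\,a^2,
\]
so that the $v$-dependence of the integrand is the Gaussian factor $\exp(-\tfrac12 v^*(\Lambda+z-iH-ia)v)$. Integrating $v$ out (the Gaussian integral underlying Lemma~\ref{lemma:Wick formula}) yields $(2\pi)^N/\det(\Lambda+z-iH-ia)$, while the scalar $a$ retains the weight $e^{\frac i6 a^3-\frac12 z a^2}$, exactly the $s$-weight above. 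Matching normalizations via $c_{\Lambda_z,N+1}(2\pi)^N=c_{\Lambda,N}\sqrt{z/2\pi}\,\det(\Lambda+z)$ and $\det(\Lambda+z)\sqrt{\det\left(\frac{\Lambda-z}{\Lambda+z}\right)}=\det\sqrt{\Lambda^2-z^2}$, the right-hand side of \eqref{eq:main equation} becomes (after renaming $a$ to $s$)
\[
c_{\Lambda,N}\sqrt{\tfrac z{2\pi}}\,\det\sqrt{\Lambda^2-z^2}\int_\mbR e^{\frac i6 s^3-\frac12 z s^2}\,ds\int_{\mcH_N}\frac{e^{\frac i6\tr H^3-\frac12\tr H^2\Lambda}}{\det(\Lambda+z-iH-is)}\,dH.
\]

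The theorem thus reduces to identifying this integral with the Fourier integral of the first paragraph, and I expect this reconciliation of the determinantal factors to be the main obstacle. The two integrands are \emph{not} equal pointwise in $H$: the Fourier side carries the ratio $\det(M-iH+is-z)/\det(M-iH-is+z)$ built from the shifted external matrix $M=\Lambda+\sqrt{\Lambda^2-z^2}$, whereas the Kontsevich side carries the single denominator $1/\det(\Lambda+z-iH-is)$ built from $\Lambda+z$ together with the scalar prefactor $\det\sqrt{\Lambda^2-z^2}$. Indeed, as $|s|\to\infty$ the ratio tends to $(-1)^N$ while the single denominator decays, so the equality can hold only after the joint $(s,H)$-integration. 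The route I would take is to rewrite the denominator on the Kontsevich side once more as a Gaussian integral over an auxiliary vector, reassembling an $(N+1)$-dimensional integral, and then to perform an explicit change of variables in the Hermitian entries — a diagonal complex shift trading $\sqrt{\Lambda^2-z^2}$ for $z$ together with completing the cube in the scalar direction — carrying one representation into the other. The two genuinely delicate points are then: (i) justifying the attendant contour deformation in the integration variables, for which one uses the Gaussian damping $e^{-\frac12 z s^2}$ and the absence of poles of the integrand for $0<z<\lambda_i$; and (ii) verifying that this change of variables produces precisely the numerator determinant of the Fourier side, which is the exact origin of the factor $\det\sqrt{\Lambda^2-z^2}$ and hence of the square-root correction in \eqref{eq:main equation}. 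Once this integral identity is established, uniqueness of the asymptotic expansion yields the claimed equality of formal series.
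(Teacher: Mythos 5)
Your reduction to an identity of convergent integrals is the same first move as the paper's (equation~\eqref{eq:main convergent equation} there), and your unfolding of the $(N+1)\times(N+1)$ Kontsevich integral by the block decomposition $\tilde H=\left(\begin{smallmatrix}H&v\\ v^*&a\end{smallmatrix}\right)$, together with the Gaussian integration over $v$ and the bookkeeping $c_{\Lambda_z,N+1}(2\pi)^N=c_{\Lambda,N}\sqrt{z/2\pi}\det(\Lambda+z)$, is correct. The genuine gap is in the step you yourself flag as the main obstacle: the passage from the representation with the single reciprocal $1/\det(\Lambda+z-iH-is)$ to the Fourier-side representation with the ratio $\det\bigl(\frac{\Lambda+A-iH+is-z}{\Lambda+A-iH-is+z}\bigr)$. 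The mechanism you propose --- re-Gaussianizing the denominator with an auxiliary vector and then performing a diagonal complex shift of the Hermitian entries plus completing the cube in the scalar direction --- cannot produce this ratio. A shift of integration variables has Jacobian $1$ and only modifies the arguments of the integrand, so after any such shift the $v$-integral still yields a single reciprocal determinant (concretely, shifting by $-i(\Lambda_z+A_z)$ turns the $v$-quadratic form into $-\frac12 v^*(-A-iH-ia)v$ and the $v$-integral into $(2\pi)^N/\det(-A-iH-ia)$, which moreover is divergent when $A$ has positive real part); a bosonic Gaussian integral can never place a determinant in the numerator. So your sketch, as written, has no way to generate the numerator $\det(\Lambda+A-iH+is-z)$, which is exactly where the factor $\det\sqrt{\Lambda^2-z^2}$ and hence the square root in~\eqref{eq:main equation} must come from.

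The paper supplies the missing mechanism through diagonalization and the Harish-Chandra--Itzykson--Zuber formula (Lemmas~\ref{lemma:polar} and~\ref{lemma:averaging}): applying HCIZ to the weight $e^{\frac12\tr H^2A_z}$ produces the eigenvalue factor $\prod_{1\le i<j\le N+1}\frac{m_j-m_i}{m_j+m_i}$, and separating the $(N+1)$-st eigenvalue $m_{N+1}=s$ yields precisely $\det\bigl(\frac{M'-s}{M'+s}\bigr)$, i.e.\ the determinant ratio; Lemma~\ref{lemma:averaging} applied backwards then reassembles the $N\times N$ Hermitian integral, and two shifts ($H\mapsto H+i(\Lambda+A)$, $s\mapsto s+iz$) finish the computation. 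Note also that to make these shifted integrals meaningful the paper takes $\lambda_i,z$ purely imaginary and invokes the semi-convergence results of~\cite{DIZ93}; your appeal to ``Gaussian damping and absence of poles for $0<z<\lambda_i$'' does not cover this, since the quadratic form $+\frac12\tr H^2A_z$ appearing after the shift has a vanishing eigenvalue in the $(N+1)$-st direction and provides no damping there. To complete your argument you would need either to import the HCIZ step or to find a genuinely different route to the determinant ratio (e.g.\ a supersymmetric boson--fermion representation), neither of which is contained in your sketch.
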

Before proving the proposition let us make a remark about the right-hand side of equation~\eqref{eq:main equation}. We see that
$\tau^c_{N+1}(\Lambda_z)\in\mbC[[\lambda_1^{-1},\ldots,\lambda_N^{-1}]][[z^{-1}]]$ and
$\sqrt{\det\left(\frac{1-z\Lambda^{-1}}{1+z\Lambda^{-1}}\right)}\in\mbC[[\lambda_1^{-1},\ldots,\lambda_N^{-1}]][[z]]$. So, on the right-hand side of~\eqref{eq:main equation} we multiply a power series in~$z^{-1}$ and a power series in~$z$. In general, the multiplication of two such series may not be well-defined. In our case, the issue
is resolved as follows. We have
$$
\sqrt{\det\left(\frac{1-z\Lambda^{-1}}{1+z\Lambda^{-1}}\right)}=e^{-\sum_{k\ge 0}\frac{1}{2k+1}z^{2k+1}\tr\Lambda^{-2k-1}}.
$$
Therefore, $\sqrt{\det\left(\frac{1-z\Lambda^{-1}}{1+z\Lambda^{-1}}\right)}$ has the form $\sum_{m\ge 0}f_m z^m$, where $f_m\in\mbC[\lambda_1^{-1},\ldots,\lambda_N^{-1}]$ is homogeneous of degree~$-m$. Thus, the product on the right-hand side of~\eqref{eq:main equation} is well-defined.

Our proof of Proposition~\ref{proposition:main} uses a famous technique, that is sometimes called the averaging procedure over the unitary group. We recall it in
Section~\ref{subsection:averaging}. After that, in Section~\ref{subsection:proof of the main proposition}, we prove Proposition~\ref{proposition:main}.

%%%%%%%%%%%%%%%%%%%%%%%%%%%%%%%%%%%%%%%%%%%%%%%%%%%%%%%%%%%%%%%%%%%%%%

\subsection{Averaging procedure over the unitary group}\label{subsection:averaging}

\subsubsection{Polar decomposition}

It is well-known that an arbitrary Hermitian matrix $H$ admits a polar decomposition, $H=U M U^{-1}$, where $U$ is a unitary matrix and $M$ is a diagonal matrix with real entries.
Denote by $\mcU_N$ the group of unitary $N\times N$ matrices. Given a vector $\overline m=(m_1,\ldots,m_N)\in\mbR^N$, define a map $\pi_{\overline m}\colon\mcU_N\to\mcH_N$ by
$$
\pi_{\overline m}(U):=U\diag(m_1,\ldots,m_N)U^{-1}.
$$
For a Hermitian matrix~$H,$ let $\overline m(H)=(m_1(H),\ldots,m_N(H))$ be the vector of its eigenvalues. It is defined up to a permutation of the coordinates. For
a subset~$D\subset\mbR^N$, that is invariant under permutations of the coordinates, let
$$
\mcH_N^D:=\{H\in\mcH_N|\overline m(H)\in D\}\subset\mcH_N.
$$
Suppose moreover that $D\subset\mbR^N$ is compact and measurable. Let $f\colon\mcH_N\to\mbC$ be an arbitrary smooth function. We denote by $dU$ the Haar probability measure on
$\mcU_N$. We have the following result (see e.g.~\cite[Sections~3 and~4]{LZ04}).

\begin{lemma}\label{lemma:polar}
We have
$$
\int_{\mcH^D_N}f(H)dH=\frac{\pi^{\frac{N^2-N}{2}}}{\prod_{i=1}^N i!}\int_{D}\prod_{1\le i<j\le N}(m_j-m_i)^2dm_1\ldots dm_N\int_{\mcU_N}(\pi_{\overline m}^*f)dU.
$$
\end{lemma}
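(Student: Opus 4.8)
The statement is the Weyl integration formula for the adjoint action of $\mcU_N$ on $\mcH_N$, and I would prove it by a change of variables along the spectral (polar) decomposition. The plan is to regard the map $\Phi\colon\mcU_N\times\mbR^N\to\mcH_N$, $\Phi(U,\overline m):=U\diag(m_1,\ldots,m_N)U^{-1}=\pi_{\overline m}(U)$, and to compute its Jacobian on the open dense full-measure set of matrices with pairwise distinct eigenvalues. Over this set each $H$ has a well-defined unordered spectrum $\overline m(H)$, and $U$ is determined up to right multiplication by the diagonal torus $T\subset\mcU_N$ and by permutation matrices; since $D$ is assumed invariant under permutations of the coordinates and $dU$ is the Haar \emph{probability} measure, I expect these redundancies to be absorbed into a single universal constant, so that the whole computation reduces to identifying the Jacobian factor and then this constant.

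For the Jacobian I would fix a regular point and compute the differential of $\Phi$. Because conjugation by any fixed unitary preserves the volume form $dH$, it suffices to work at $U_0=\id$, i.e.\ $H=M:=\diag(m_1,\ldots,m_N)$. Writing nearby unitaries as $\id+A$ with $A$ anti-Hermitian and perturbing $M$ by a real diagonal $dM$, the tangent map sends $(A,dM)$ to $[A,M]+dM$. The commutator has vanishing diagonal and off-diagonal entries $([A,M])_{ij}=a_{ij}(m_j-m_i)$, while $dM$ contributes only to the diagonal. Hence, in the real coordinates $x_{ii},\,x_{ij}=\Re h_{ij},\,y_{ij}=\Im h_{ij}$ ($i<j$) used to define $dH$, the diagonal directions $x_{ii}$ are matched by $dm_i$ with unit Jacobian, each off-diagonal pair $(x_{ij},y_{ij})$ is matched by $(\Re a_{ij},\Im a_{ij})$ with Jacobian $(m_j-m_i)^2$, and the purely imaginary diagonal entries $a_{ii}$ lie in the kernel --- these span exactly the Lie algebra of the stabilizer torus $T$. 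Multiplying the off-diagonal contributions gives the factor $\prod_{1\le i<j\le N}(m_j-m_i)^2$.

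Assembling the change of variables, and using that $\pi_{\overline m}^*f$ is constant along the torus fibers so that the angular integration is precisely $\int_{\mcU_N}(\pi_{\overline m}^*f)\,dU$, I obtain
$$
\int_{\mcH_N^D}f(H)\,dH=C_N\int_{D}\prod_{1\le i<j\le N}(m_j-m_i)^2\,\Big(\int_{\mcU_N}(\pi_{\overline m}^*f)\,dU\Big)\,dm_1\cdots dm_N
$$
for a constant $C_N$ depending only on $N$ (not on $f$ or $D$); the compactness and measurability of $D$ justify Fubini and convergence. The step I expect to be most delicate is exactly this bookkeeping --- checking that integrating $U$ over the \emph{whole} group $\mcU_N$ against the Haar probability measure, together with integrating $\overline m$ over the full permutation-symmetric region $D$, reproduces the formula with no residual factor of $\mathrm{vol}(T)$ or $N!$, so that all such data collapse into $C_N$.

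Finally I would pin down $C_N$ by a single test computation rather than by computing $\mathrm{vol}(\mcU_N/T)$ directly. Since $C_N$ is independent of $D$, I may let $D$ exhaust $\mbR^N$ and apply the identity to an integrable Gaussian. Taking $f(H)=e^{-\frac12\tr H^2}$, the left-hand side factorizes over the real coordinates to give $(2\pi)^{N/2}\pi^{(N^2-N)/2}$, while on the right-hand side $\pi_{\overline m}^*f=e^{-\frac12\sum_i m_i^2}$ is $U$-independent and $\int_{\mcU_N}dU=1$, so the right-hand side equals $C_N\int_{\mbR^N}\prod_{i<j}(m_j-m_i)^2e^{-\frac12\sum_i m_i^2}\,dm$. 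The latter is the Mehta integral, equal to $(2\pi)^{N/2}\prod_{j=1}^N j!$. Comparing the two expressions yields $C_N=\pi^{(N^2-N)/2}/\prod_{j=1}^N j!$, which is precisely the constant in the statement.
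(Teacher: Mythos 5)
Your argument is correct: the Jacobian computation at a regular diagonal point gives the squared Vandermonde, the torus and permutation redundancies are legitimately absorbed into a universal constant, and the Gaussian/Mehta-integral test pins that constant down to $\pi^{(N^2-N)/2}/\prod_{i=1}^N i!$ (consistent, e.g., with the way the lemma combines with the HCIZ formula to yield Lemma~\ref{lemma:averaging}). The paper itself offers no proof --- it cites this as a standard fact from \cite[Sections~3 and~4]{LZ04} --- and your derivation is essentially the standard Weyl integration argument found there, so there is nothing to contrast.
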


\subsubsection{Harish-Chandra-Itzykson-Zuber formula}

Let $a_1,\ldots,a_N,b_1,\ldots,b_N$ be arbitrary real numbers such that $a_i\ne a_j$ and $b_i\ne b_j$, for $i\ne j$. Let $A:=\diag(a_1,\ldots,a_N)$ and $B:=\diag(b_1,\ldots,b_N)$. Let
$t$ be an arbitrary non-zero complex parameter. The Harish-Chandra-Itzykson-Zuber (HCIZ) formula says that
\begin{gather}\label{eq:HCIZ formula}
\int_{U_N}e^{t\tr(AUBU^{-1})}dU=\frac{\prod_{i=1}^{N-1}i!}{t^{\frac{N^2-N}{2}}}\frac{\det(e^{t a_i b_j})}{\prod_{1\le i<j\le N}(a_j-a_i)(b_j-b_i)}.
\end{gather}
The right-hand has no poles on the diagonals $\{a_i=a_j\}$ or $\{b_i=b_j\}$, and so it defines a smooth
function of $2N$ real parameters $a_1,\ldots,a_N,b_1,\ldots,b_N$. Formula~\eqref{eq:HCIZ formula} was originally found in~\cite{H-C57} and then was rediscovered in~\cite{IZ80}.

We will apply the HCIZ formula in the following way. Again, let $D$ be a compact measurable subset of $\mbR^N,$ invariant under permutations of the coordinates. Let $f\colon\mcH_N\to\mbC$ be a smooth unitary invariant function: $f(UHU^{-1})=f(H)$, for any $U\in\mcU_N$. Finally, consider pairwise distinct purely imaginary complex numbers $a_1,\ldots,a_N$ and set $A:=\diag(a_1,\ldots,a_N)$.
\begin{lemma}\label{lemma:averaging}
We have
\begin{gather}\label{eq:averaging}
\int_{\mcH_N^D}f(H)e^{\half\tr H^2 A}dH=\frac{(2\pi)^{\frac{N^2-N}{2}}}{N!\prod_{1\le i<j\le N}(a_j-a_i)}\int_D f(M)\det\left(e^{\half m_i^2 a_j}\right)\prod_{1\le i<j\le
N}\frac{m_j-m_i}{m_j+m_i}dM,
\end{gather}
where $M:=\diag(m_1,\ldots,m_N)$ and $dM:=dm_1\ldots dm_N$.
\end{lemma}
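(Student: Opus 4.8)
The plan is to feed the integrand $g(H):=f(H)e^{\frac12\tr H^2A}$ into the polar decomposition of Lemma~\ref{lemma:polar} and then evaluate the resulting integral over $\mcU_N$ by the HCIZ formula~\eqref{eq:HCIZ formula}. First I would apply Lemma~\ref{lemma:polar} to $g$, which is smooth but \emph{not} unitary invariant. Writing $M:=\diag(m_1,\ldots,m_N)$ and using $\pi_{\overline m}(U)=UMU^{-1}$, the only place where unitary invariance enters is the factor $f$: since $f(UMU^{-1})=f(M)$ and $(UMU^{-1})^2=UM^2U^{-1}$, the pullback factorises as
$$
\pi_{\overline m}^*g=f(M)\,e^{\frac12\tr((UMU^{-1})^2A)}=f(M)\,e^{\frac12\tr(A\,U M^2 U^{-1})},
$$
where I used cyclicity of the trace in the last step. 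Thus $f(M)$ pulls out of the $\int_{\mcU_N}dU$ integral and Lemma~\ref{lemma:polar} gives
$$
\int_{\mcH_N^D}f(H)e^{\frac12\tr H^2A}dH=\frac{\pi^{\frac{N^2-N}{2}}}{\prod_{i=1}^N i!}\int_D f(M)\prod_{1\le i<j\le N}(m_j-m_i)^2\left(\int_{\mcU_N}e^{\frac12\tr(A\,U M^2 U^{-1})}dU\right)dM.
$$

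Next I would compute the inner integral with the HCIZ formula~\eqref{eq:HCIZ formula}, taking $t=\half$, the given matrix $A=\diag(a_1,\ldots,a_N)$, and $B=M^2=\diag(m_1^2,\ldots,m_N^2)$, i.e. $b_j=m_j^2$. This yields
$$
\int_{\mcU_N}e^{\frac12\tr(A\,U M^2 U^{-1})}dU=\frac{2^{\frac{N^2-N}{2}}\prod_{i=1}^{N-1}i!}{\prod_{1\le i<j\le N}(a_j-a_i)(m_j^2-m_i^2)}\det\left(e^{\frac12 a_i m_j^2}\right),
$$
where the factor $(1/2)^{-\frac{N^2-N}{2}}$ coming from the HCIZ prefactor has produced the power of $2$. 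Substituting this back, I would simplify using three elementary identities: $\prod_{i=1}^N i!=N!\prod_{i=1}^{N-1}i!$, which turns the factorial ratio into $1/N!$; the factorisation $m_j^2-m_i^2=(m_j-m_i)(m_j+m_i)$, which combines with $\prod_{i<j}(m_j-m_i)^2$ to leave $\prod_{i<j}\frac{m_j-m_i}{m_j+m_i}$; and $\pi^{\frac{N^2-N}{2}}2^{\frac{N^2-N}{2}}=(2\pi)^{\frac{N^2-N}{2}}$. Finally, since transposing a matrix leaves its determinant unchanged, $\det\left(e^{\frac12 a_i m_j^2}\right)=\det\left(e^{\frac12 m_i^2 a_j}\right)$, which is the form appearing in the statement. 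Collecting the constants then reproduces~\eqref{eq:averaging} exactly.

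The one point that requires care — and the main (mild) obstacle — is the hypothesis of the HCIZ formula that both $\{a_i\}$ and $\{b_j\}$ be pairwise distinct. The $a_i$ are distinct by assumption, but $b_j=m_j^2$ can coincide for $i\ne j$ precisely when $m_i=-m_j$. I would dispose of this either by noting that the locus $\{m_i=-m_j\}$ has Lebesgue measure zero in $\mbR^N$ and so does not affect the value of the $dM$-integral, or, more cleanly, by invoking the remark following~\eqref{eq:HCIZ formula} that the right-hand side of the HCIZ formula extends to a smooth function across the diagonals $\{b_i=b_j\}$, so that the identity holds for the manifestly smooth left-hand side at all $M$. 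With this understood the computation is entirely routine, consisting only of the bookkeeping of the Vandermonde-type factors and constants described above.
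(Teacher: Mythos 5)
Your proposal is correct and follows exactly the route the paper takes: the paper's entire proof is the one-line remark that the lemma ``follows from combining Lemma~\ref{lemma:polar} and the HCIZ formula~\eqref{eq:HCIZ formula},'' and your computation (with $t=\half$, $B=M^2$, the Vandermonde cancellation $\frac{(m_j-m_i)^2}{m_j^2-m_i^2}=\frac{m_j-m_i}{m_j+m_i}$, and the constant bookkeeping) is precisely the intended combination. Your extra care about the non-unitary-invariance of the full integrand and about the coincidences $m_i=-m_j$ is a welcome bit of rigor that the paper leaves implicit.
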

\begin{proof}
The lemma follows from combining Lemma~\ref{lemma:polar} and the HCIZ formula~\eqref{eq:HCIZ formula}.
\end{proof}

%%%%%%%%%%%%%%%%%%%%%%%%%%%%%%%%%%%%%%%%%%%%%%%%%%%%%%%%%%%%%%%%%%%%%

\subsection{Proof of Proposition~\ref{proposition:main}}\label{subsection:proof of the main proposition}

Suppose that $\lambda_1,\ldots,\lambda_N$ and $z$ are positive real numbers. Consider Kontsevich's integral
\begin{gather}\label{eq:Kontsevich's integral}
c_{\Lambda_z,N+1}\int_{\mcH_{N+1}}e^{\frac{i}{6}\tr H^3-\frac{1}{2}\tr H^2\Lambda_z}dH.
\end{gather}
As we already recalled in Section~\ref{subsection:convergent matrix integral}, the integral~\eqref{eq:Kontsevich's integral} is absolutely convergent and its asymptotic expansion, when $\lambda_i,z\to+\infty$, is given by the series $\tau^c_{N+1}(\Lambda_z)$ (see~\cite{Kon92}). Suppose moreover that $z<\lambda_i$. Using Proposition~\ref{proposition:convergent Fourier transform} we see that equation~\eqref{eq:main equation} is a consequence of the following equation:
\begin{multline}\label{eq:main convergent equation}
c_{\Lambda_z,N+1}\int_{\mcH_{N+1}}e^{\frac{i}{6}\tr H^3-\frac{1}{2}\tr H^2\Lambda_z}dH=\\
=c_{\Lambda,N}\sqrt{\det\left(\frac{\Lambda+z}{\Lambda-z}\right)}\sqrt{\frac{z}{2\pi}}\int_{\mbR}e^{\frac{i}{6}s^3-\frac{1}{2}s^2 z}ds\int_{\mcH_N} e^{\frac{i}{6}\tr
H^3-\frac{1}{2}\tr H^2\Lambda}\det\left(\frac{\Lambda+\sqrt{\Lambda^2-z^2}-iH+is-z}{\Lambda+\sqrt{\Lambda^2-z^2}-iH-is+z}\right)dH.
\end{multline}

The integral~\eqref{eq:Kontsevich's integral} is still absolutely convergent, when~$\lambda_i$'s and~$z$ are complex numbers with positive real parts. Moreover, the integral~\eqref{eq:Kontsevich's
integral} is semi-convergent, if $\lambda_i,z$ are purely imaginary non-zero complex numbers, and the asymptotic expansion of it, when $\lambda_i,z\to\infty$ is still given by the power series $\tau^c_{N+1}(\Lambda_z)$. See the discussion of these subtle questions in~\cite[page 208]{DIZ93}.

Let $\lambda_i$ and $z$ be purely imaginary complex numbers with positive imaginary parts. Assume moreover that $
|z|<|\lambda_i|$. Consider the diagonal matrix
$$
A:=\sqrt{\Lambda^2-z^2},
$$
where we choose a particular value of the square root, such that all entries of the matrix~$A$ have positive imaginary parts. Let $a_1,\ldots,a_N$ be the diagonal elements of~$A$. Let
$$
A_z:=\sqrt{\Lambda_z^2-z^2}=\diag(a_1,\ldots,a_N,0).
$$
Let us perform the change of variables $H\mapsto H-i(\Lambda_z+A_z)$ on the left-hand side of equation~\eqref{eq:main convergent equation}. It occurs that this kind of shifts is very useful in the study of Hermitian matrix models (see e.g.~\cite[page 5668]{IZ92} or~\cite[page 208]{DIZ93}). We get
$$
c_{\Lambda_z,N+1}e^{\tr B_z}\int_{\mcH_{N+1}}e^{\frac{i}{6}\tr H^3+\frac{1}{2}\tr H^2A_z+i\frac{z^2}{2}\tr H}dH,
$$
where $B_z:=\frac{(A_z+\Lambda_z)^2(2\Lambda_z-A_z)}{6}$. Define~$a_{N+1}$ to be equal to~$0$. By Lemma~\ref{lemma:averaging}, the last integral is equal to
\begin{gather}\label{exp:integral1}
\frac{(2\pi)^{\frac{(N+1)^2-N-1}{2}}c_{\Lambda_z,N+1}e^{\tr B_z}}{(N+1)!\prod_{1\le i<j\le N+1}(a_j-a_i)}\int_{\mbR^{N+1}}e^{\frac{i}{6}\tr M^3+i\frac{z^2}{2}\tr
M}\det\left(e^{\frac{1}{2}m_i^2a_j}\right)_{1\le i,j\le N+1}\prod_{1\le i<j\le N+1}\frac{m_j-m_i}{m_j+m_i}dM,
\end{gather}
where $M:=\diag(m_1,\ldots,m_{N+1})$ and $dM:=dm_1\ldots dm_{N+1}$. Note that
\begin{gather*}
c_{\Lambda_z,N+1}=c_{\Lambda,N}(2\pi)^{-N}\sqrt\frac{z}{2\pi}\prod_{i=1}^N(z+\lambda_i)\quad\text{and}\quad\tr B_z=\tr B+\frac{z^3}{3},
\end{gather*}
where $B=\frac{(A+\Lambda)^2(2\Lambda-A)}{6}$. Expanding the determinant along the last column, we get
\begin{align*}
&\det\left(e^{\frac{1}{2}m_i^2a_j}\right)_{1\le i,j\le N+1}\prod_{1\le i<j\le N+1}\frac{m_j-m_i}{m_j+m_i}=\\
&\hspace{2cm}=\sum_{k=1}^{N+1}(-1)^{N+1-k}\det\left(e^{\frac{1}{2}m_i^2a_j}\right)_{\substack{1\le i\le N+1,\,i\ne k\\1\le j\le N}}\prod_{1\le i<j\le N+1}\frac{m_j-m_i}{m_j+m_i}=\\
&\hspace{2cm}=\sum_{k=1}^{N+1}\det\left(e^{\frac{1}{2}m_i^2a_j}\right)_{\substack{1\le i\le N+1,\,i\ne k\\1\le j\le N}}\prod_{\substack{1\le i<j\le N+1\\i,j\ne k}}\frac{m_j-m_i}{m_j+m_i}\prod_{\substack{1\le i\le N+1\\i\ne k}}\frac{m_k-m_i}{m_k+m_i}.
\end{align*}
When we substitute this sum in expression~\eqref{exp:integral1}, we see that all the $N+1$ summands give the same contribution to the integral. Therefore, we can rewrite~\eqref{exp:integral1} as follows:
\begin{align}
&\sqrt{\frac{z}{2\pi}}\prod_{i=1}^N\frac{z+\lambda_i}{a_i}e^{\frac{z^3}{3}}\int_{\mbR}e^{\frac{i}{6}m_{N+1}^3+\frac{i}{2}m_{N+1}z^2}dm_{N+1}\times\label{exp:integral2}\\
&\times\left[\frac{(2\pi)^{\frac{N^2-N}{2}}c_{\Lambda,N}e^{\tr B}}{N!\prod_{1\le i<j\le N}(a_j-a_i)}\int_{\mbR^{N}}e^{\frac{i}{6}\tr (M')^3+i\frac{z^2}{2}\tr
M'}\det\left(e^{\frac{1}{2}m_i^2 a_j}\right)_{1\le i,j\le N}\times\right.\notag\\
&\left.\times\prod_{1\le i<j\le N}\frac{m_j-m_i}{m_j+m_i}\det\left(\frac{M'-m_{N+1}}{M'+m_{N+1}}\right)dM'\right],\notag
\end{align}
where $M':=\diag(m_1,\ldots,m_N)$ and $dM':=dm_1\ldots dm_N$. By Lemma~\ref{lemma:averaging}, the expression in the square brackets is equal to
$$
c_{\Lambda,N}e^{\tr B}\int_{\mcH_N}e^{\frac{i}{6}\tr H^3+i\frac{z^2}{2}\tr H+\frac{1}{2}\tr H^2 A}\det\frac{H-m_{N+1}}{H+m_{N+1}}dH.
$$
Redenoting $m_{N+1}$ by $s$, expression~\eqref{exp:integral2} is equal to
\begin{gather}\label{exp:integral3}
\sqrt{\frac{z}{2\pi}}\det\left(\frac{z+\Lambda}{A}\right)e^{\frac{z^3}{3}}\int_{\mbR}e^{\frac{i}{6}s^3+\frac{i}{2}sz^2}ds\left[c_{\Lambda,N}e^{\tr B}\int_{\mcH_N}e^{\frac{i}{6}\tr
H^3+\frac{1}{2}\tr H^2 A+i\frac{z^2}{2}\tr H}\det\frac{H-s}{H+s}dH\right].
\end{gather}
Now we make the shift $H\mapsto H+i(\Lambda+A)$ in the integral in the square brackets. Using also that $A=\sqrt{\Lambda^2-z^2}=\sqrt{(\Lambda-z)(\Lambda+z)}$, we get that~\eqref{exp:integral3} is equal to
\begin{gather*}
\sqrt{\frac{z}{2\pi}}\sqrt{\det\left(\frac{\Lambda+z}{\Lambda-z}\right)}e^{\frac{z^3}{3}}\int_{\mbR}e^{\frac{i}{6}s^3+\frac{i}{2}sz^2}ds\left[c_{\Lambda,N}\int_{\mcH_N}e^{\frac{i}{6}\tr
H^3-\frac{1}{2}\tr H^2\Lambda}\det\left(\frac{\Lambda+A-iH+is}{\Lambda+A-iH-is}\right)dH\right].
\end{gather*}
Finally, making the shift $s\mapsto s+iz$, we come to
\begin{gather*}
c_{\Lambda,N}\sqrt{\det\left(\frac{\Lambda+z}{\Lambda-z}\right)}\sqrt{\frac{z}{2\pi}}\int_{\mbR}e^{\frac{i}{6}s^3-\frac{1}{2}s^2 z}ds\int_{\mcH_N} e^{\frac{i}{6}\tr
H^3-\frac{1}{2}\tr H^2\Lambda}\det\left(\frac{\Lambda+A-iH+is-z}{\Lambda+A-iH-is+z}\right)dH.
\end{gather*}
Equation~\eqref{eq:main convergent equation} is proved, and, thus, also Proposition~\ref{proposition:main}.

%%%%%%%%%%%%%%%%%%%%%%%%%%%%%%%%%%%%%%%%%%%%%%%%%%%%%%%%%%%%%%%%%%%%
%%%%%%%%%%%%%%%%%%%%%%%%%%%%%%%%%%%%%%%%%%%%%%%%%%%%%%%%%%%%%%%%%%%%

\section{Proof of Theorem~\ref{theorem:main}}

From the dimension constraint~\eqref{open dimension} it follows that, if an open intersection number~\eqref{openproducts} is non-zero, then the genus~$g$ is uniquely determined by~$a_i$'s and~$k$. Using this observation, it is easy to show that, if the open KdV equations hold for~$u=1$, then they are also true for an arbitrary~$u$. The same is true for the open Virasoro equations. Therefore, without loss of generality, we can assume that~$u=1$.

Let~$G_z$ be the shift operator which acts on a series $f(t_0,t_1,\ldots)\in \mbC[[t_0,t_1,\ldots]]$ by
$$
G_z[f](t_0,t_1,\ldots):=f\left(t_0-\frac{k_0}{z},t_1-\frac{k_1}{z^3},t_2-\frac{k_2}{z^5},\ldots\right),
$$
where $k_n:=(2n-1)!!$ and, by definition, $(-1)!!:=1$. Let $\xi^c:=\sum_{i\ge 0}\frac{t_iz^{2i+1}}{(2i+1)!!}$.

Proposition~\ref{proposition:main} implies the following corollary.
\begin{cor}\label{corollary:main}
We have
\begin{gather}\label{eq:eq1}
\Phi^{form}_s\left[\tau^o\right]=G_z[\tau^c]e^{\xi^c}.
\end{gather}
\end{cor}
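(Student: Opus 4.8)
The plan is to deduce the corollary from Proposition~\ref{proposition:main} by identifying the substituted variables on the two sides of~\eqref{eq:main equation} and then removing the substitution $t_i=-(2i-1)!!\tr\Lambda^{-2i-1}$ by an injectivity argument valid for all $N$.

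First I would rewrite the left-hand side of~\eqref{eq:main equation}. By the compatibility~\eqref{eq:relation of two transforms} of the two versions of the formal Fourier transform, and since $\tau^o_N$ is by definition $\tau^o$ after this substitution, we have $\Phi^{form}_s[\tau^o_N](\Lambda,z)=\left.\Phi^{form}_s[\tau^o]\right|_{t_i=-(2i-1)!!\tr\Lambda^{-2i-1}}$. Thus~\eqref{eq:main equation} can be read as an identity between the substitution of $\Phi^{form}_s[\tau^o]$ and the substitution of the claimed right-hand side $G_z[\tau^c]e^{\xi^c}$ of~\eqref{eq:eq1}.

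Next I would match the two factors on the right-hand side of~\eqref{eq:main equation} with the substitutions of $G_z[\tau^c]$ and $e^{\xi^c}$. For the first factor, observe that $\tr\Lambda_z^{-2i-1}=\tr\Lambda^{-2i-1}+z^{-2i-1}$, so under $t_i=-(2i-1)!!\tr\Lambda^{-2i-1}$ the shifted argument $t_i-k_i z^{-2i-1}$ with $k_i=(2i-1)!!$ becomes exactly $-(2i-1)!!\tr\Lambda_z^{-2i-1}$; hence $\left.G_z[\tau^c]\right|_{t_i=-(2i-1)!!\tr\Lambda^{-2i-1}}=\tau^c_{N+1}(\Lambda_z)$. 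For the second factor, substituting into $\xi^c=\sum_{i\ge 0}\frac{t_i z^{2i+1}}{(2i+1)!!}$ and using $(2i-1)!!/(2i+1)!!=1/(2i+1)$ gives $\left.\xi^c\right|_{t_i=-(2i-1)!!\tr\Lambda^{-2i-1}}=-\sum_{k\ge0}\frac{1}{2k+1}z^{2k+1}\tr\Lambda^{-2k-1}$, which by the computation recorded just after Proposition~\ref{proposition:main} is precisely $\log\sqrt{\det\left(\frac{1-z\Lambda^{-1}}{1+z\Lambda^{-1}}\right)}$. Combining these, the right-hand side of~\eqref{eq:main equation} equals $\left.\left(G_z[\tau^c]e^{\xi^c}\right)\right|_{t_i=-(2i-1)!!\tr\Lambda^{-2i-1}}$, so Proposition~\ref{proposition:main} says that $\Phi^{form}_s[\tau^o]$ and $G_z[\tau^c]e^{\xi^c}$ agree after this substitution for every $N\ge1$.

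Finally I would remove the substitution. Both sides are series in $t_0,t_1,\ldots$ (with coefficients involving $z$) that are homogeneous once one assigns $\deg t_i=2i+1$ and $\deg z=-1$, and the substitution $t_i\mapsto-(2i-1)!!\tr\Lambda^{-2i-1}$ respects this grading, sending the degree-$d$ part to a rational function of homogeneity $-d$ in the $\lambda_j$. It therefore suffices to argue degree by degree, where each side is a polynomial in finitely many $t_i$ and in $z$. The key point, which I expect to be the main obstacle, is the injectivity of the substitution map in the limit over all $N$: a polynomial in $t_0,\ldots,t_m$ that vanishes under $t_i\mapsto-(2i-1)!!\sum_{j=1}^N\lambda_j^{-2i-1}$ for every $N$ must vanish, because the odd power sums $\sum_{j=1}^N\lambda_j^{-2i-1}$, $0\le i\le m$, are algebraically independent as soon as $N$ is large enough. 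Applying this to the difference of the two sides yields $\Phi^{form}_s[\tau^o]=G_z[\tau^c]e^{\xi^c}$, which is~\eqref{eq:eq1}.
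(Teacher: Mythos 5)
Your proposal is correct and follows essentially the same route as the paper: identify the two factors of the right-hand side of~\eqref{eq:main equation} as the substitutions of $G_z[\tau^c]$ and $e^{\xi^c}$, invoke~\eqref{eq:relation of two transforms} on the left, and then remove the substitution using the algebraic independence of the odd power sums $\tr\Lambda^{-2i-1}$ for $N$ large. The only difference is that you spell out details the paper labels as ``easy to check.''
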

\begin{proof}
It is easy to check that for an arbitrary $N\ge 1$ we have
$$
\left.G_z[\tau^c]\right|_{t_i=-(2i-1)!!\tr\Lambda^{-2i-1}}=\tau^c_{N+1}(\Lambda_z)\quad\text{and}\quad \left.e^{\xi^c}\right|_{t_i=-(2i-1)!!\tr\Lambda^{-2i-1}}=\sqrt{\det\left(\frac{1-z\Lambda^{-1}}{1+z\Lambda^{-1}}.\right)}.
$$
Therefore, by Proposition~\ref{proposition:main} and equation~\eqref{eq:relation of two transforms}, we have
\begin{gather}\label{eq:after substitution}
\left.\left(\Phi^{form}_s\left[\tau^o\right]\right)\right|_{t_i=-(2i-1)!!\tr\Lambda^{-2i-1}}=\left.\left(G_z[\tau^c]e^{\xi^c}\right)\right|_{t_i=-(2i-1)!!\tr\Lambda^{-2i-1}}.
\end{gather}
For an arbitrary $m\ge 0$ the functions $\tr\Lambda^{-1},\tr\Lambda^{-3},\ldots,\tr\Lambda^{-2m-1}$ become algebraically independent, when~$N$ is sufficiently large. Therefore, equation~\eqref{eq:after substitution} implies that $\Phi^{form}_s\left[\tau^o\right]=G_z[\tau^c]e^{\xi^c}$. The corollary is proved.
\end{proof}

Let us prove that the open free energy~$F^o$ satisfies the half of the Burgers-KdV hierarchy \eqref{eq:t-flows}-\eqref{eq:s-flow}. In~\cite{Bur14b} it was obtained that the half of the Burgers-KdV hierarchy can be written in a very convenient form using the Lax formalism. Let us briefly recall this result. A pseudo-differential operator $A$ is a Laurent series
$$
A=\sum_{n=-\infty}^m a_n(t)\d_x^n,
$$
where $m$ is an arbitrary integer and $a_n$ are formal power series in $t_0,t_1,\ldots$. We will always identify~$t_0$ with~$x$. Let
\begin{gather*}
A_+:=\sum_{n=0}^m a_n\d_x^n.
\end{gather*}
The product of pseudo-differential operators is defined by the following commutation rule:
\begin{gather*}
\d_x^k\circ f:=\sum_{l=0}^\infty\frac{k(k-1)\ldots(k-l+1)}{l!}\frac{\d^l f}{\d x^l}\d_x^{k-l},
\end{gather*}
where $k\in\Z$ and $f\in\mbC[[t_0,t_1,\ldots]]$. For any $m\ge 2$ and a pseudo-differential operator~$A$ of the form
$$
A=\d_x^m+\sum_{n=1}^\infty a_n\d_x^{m-n},
$$
there exists a unique pseudo-differential operator $A^{\frac{1}{m}}$ of the form
$$
A^{\frac{1}{m}}=\d_x+\sum_{n=0}^\infty \widetilde{a}_n\d_x^{-n},
$$
such that $\left(A^{\frac{1}{m}}\right)^m=A$.

Consider the pseudo-differential operator $L:=\d_x^2+2F^c_{t_0,t_0}$. In~\cite{Bur14b} it was proved that the half of the Burgers-KdV system~\eqref{eq:t-flows}-\eqref{eq:s-flow} is equivalent to the following system:
\begin{align}
\frac{\d}{\d t_n}e^{F}=&\frac{1}{(2n+1)!!}\left(L^{n+\half}\right)_+e^{F}\label{eq:first},\\
\frac{\d}{\d s}e^{F}=&\frac{1}{2}Le^{F}.\label{eq:second}
\end{align}
Let us prove that the open free energy~$F^o$ satisfies equations~\eqref{eq:first} and~\eqref{eq:second}. Let
\begin{gather*}
\xi(t_*,s_*,z):=\xi^c+\sum_{n\ge 0}\frac{s_nz^{2n+2}}{(2n+2)!!},\quad\text{and}\quad \psi(t_*,s_*,z):=\frac{G_z(\tau^c)}{\tau^c}e^{\xi}.
\end{gather*}
From the fact that the closed partition function~$\tau^c$ is a tau-function of the KdV hierarchy it follows that the series $\psi$ is the wave function of the KdV hierarchy (see e.g.~\cite{Dic03}), that is, it satisfies the following equations:
\begin{align}
\frac{\d}{\d t_n}\psi=&\frac{1}{(2n+1)!!}\left(L^{n+\half}\right)_+\psi,\label{eq:first for the wave}\\
\frac{\d}{\d s_n}\psi=&\frac{1}{2^{n+1}(n+1)!}L^{n+1}\psi\label{eq:second for the wave}.
\end{align}
From Corollary~\ref{corollary:main} it follows that
\begin{gather}\label{eq:eq2}
\Phi^{form}_s\left[e^{F^o}\right]=\psi|_{s_*=0}.
\end{gather}

Let us prove~\eqref{eq:first}. Let $O_{t_n}:=\frac{\d}{\d t_n}-\frac{1}{(2n+1)!!}\left(L^{n+\half}\right)_+$. We have
$$
\Phi^{form}_s\left[O_{t_n}e^{F^o}\right]=O_{t_n}\Phi^{form}_s\left[e^{F^o}\right]\stackrel{\text{by~\eqref{eq:eq2}}}{=}O_{t_n}\left(\psi|_{s_*=0}\right)=\left(O_{t_n}\psi\right)|_{s_*=0}\stackrel{\text{by~\eqref{eq:first
for the wave}}}{=}0.
$$
By Lemma~\ref{lemma:injectivity}, we get $O_{t_n}e^{F^o}=0$ and equation~\eqref{eq:first} is proved.

Let us prove~\eqref{eq:second}. Let $O_s:=\frac{\d}{\d s}-\frac{1}{2}L$. We have
$$
\Phi^{form}_s\left[O_s e^{F^o}\right]\stackrel{\text{by Lemma~\ref{lemma:Fourier of
derivative}}}{=}\left(\frac{z^2}{2}-\frac{1}{2}L\right)\Phi^{form}_s\left[e^{F^o}\right]\stackrel{\text{by~\eqref{eq:eq2}}}{=}\left(\frac{z^2}{2}-\frac{1}{2}L\right)\left(\psi|_{s_*=0}\right)=\left(O_s\psi\right)|_{s_*=0}\stackrel{\text{by~\eqref{eq:second
for the wave}}}{=}0.
$$
By Lemma~\ref{lemma:injectivity}, we get $O_s e^{F^o}=0$ and equation~\eqref{eq:second} is proved.

In~\cite{Bur14a} it was shown that the open KdV equations follow from the half of the Burgers-KdV hierarchy. Therefore, Conjecture~\ref{conjecture:open KdV} is proved. We also see that the open free energy~$F^o$ satisfies the initial condition~$F^o|_{t_{\ge 1}=0,s=0}=0$. In~\cite{Bur14a} it was proved that such a solution of the half of the Burgers-KdV hierarchy satisfies the open Virasoro equations~\eqref{eq:open virasoro}. Thus, Conjecture~\ref{conjecture:open Virasoro} is also proved.

%%%%%%%%%%%%%%%%%%%%%%%%%%%%%%%%%%%%%%%%%%%%%%%%%%%%%%%%%%%%%%%%%%
%%%%%%%%%%%%%%%%%%%%%%%%%%%%%%%%%%%%%%%%%%%%%%%%%%%%%%%%%%%%%%%%%%

\section{Virasoro equations}

The derivation of the open Virasoro equations~\eqref{eq:open virasoro} from the open KdV equations~\eqref{eq:openkdv} was given in~\cite{Bur14a}. Another derivation was obtained in~\cite{Bur14b}. In this section we want to show that the open Virasoro equations can be very easily obtained from Corollary~\ref{corollary:main}. Again we can assume that~$u=1$.

By Corollary~\ref{corollary:main},
$$
\Phi_s^{form}[\tau^o]=G_z[\tau^c]e^{\xi^c}.
$$
From the closed Virasoro equations~\eqref{vira} it is easy to derive that (see e.g.~\cite{Bur14b})
$$
L_n\left(G_z[\tau^c]e^{\xi^c}\right)=\left(\frac{z^{2n+2}}{2^{n+1}}\left(\frac{1}{z}\frac{\d}{\d
z}-\frac{1}{2z^2}-z\right)+\frac{n+1}{2^{n+2}}z^{2n}\right)\left(G_z[\tau^c]e^{\xi^c}\right).
$$
By Lemmas~\ref{lemma:Fourier of derivative} and~\ref{lemma:Fourier and multiplication}, we have
\begin{align*}
\left(\frac{z^{2n+2}}{2^{n+1}}\left(\frac{1}{z}\frac{\d}{\d
z}-\frac{1}{2z^2}-z\right)+\frac{n+1}{2^{n+2}}z^{2n}\right)\Phi_s^{form}[\tau^o]&=\Phi_s^{form}\left[\left(-\frac{\d^{n+1}}{\d s^{n+1}}\circ s+\frac{n+1}{4}\frac{\d^n}{\d
s^n}\right)\tau^o\right]=\\
&\quad\quad=-\Phi_s^{form}\left[\left(s\frac{\d^{n+1}}{\d s^{n+1}}+\frac{3n+3}{4}\frac{\d^n}{\d s^n}\right)\tau^o\right].
\end{align*}
Here, as above, the circle~$\circ$ means the composition of operators. In the last operator inside the Fourier transform we immediately recognize the difference between the open Virasoro operator~$\mcL_n$ and the closed Virasoro operator~$L_n$ (see~\eqref{eq:open Virasoro operator}). Therefore, we get
$$
\Phi_s^{form}[\mcL_n\tau^o]=0.
$$
Lemma~\ref{lemma:injectivity} implies that $\mcL_n\tau^o=0$. The open Virasoro equations are proved.

\bibliographystyle{amsabbrvc}
%\bibliography{bibli}

\end{document}